\documentclass[12pt]{amsart}
\usepackage{amsmath,amsfonts,amsthm,amscd,amssymb,mathrsfs}

\newtheorem{theorem}{Theorem}
\newtheorem{claim}[theorem]{Claim}
\newtheorem{proposition}[theorem]{Proposition}
\newtheorem{lemma}[theorem]{Lemma}
\newtheorem{definition}[theorem]{Definition}

\newtheorem{corollary}[theorem]{Corollary}
\newtheorem{question}[theorem]{Question}

\newtheorem{remark}[theorem]{Remark}
\newtheorem{example}[theorem]{Example}
\renewcommand{\wp}{warped product}
\newcommand{\wps}{warped product structure\ }
\newcommand{\tl}{\tilde{L}}
\def\bar{\overline}
\newcommand{\FF}{f}
\newcommand{\FFF}{F}
\newcommand{\SO}{\mathrm{SO}}
\newcommand{\Spin}{\mathrm{Spin}}
\newcommand{\SU}{\mathrm{SU}}
\newcommand{\U}{\mathrm{U}}
\newcommand{\ee}{e}
\newcommand{\vv}{v}
\newcommand{\wt}{\widetilde}
\newcommand{\Q}{Q}
\newcommand{\X}{X}
\newcommand{\Y}{Y}
\newcommand{\Z}[1]{Z^+_{#1}}
\newcommand{\geu}{g_E}
\newcommand{\Ext}{\raise1pt\hbox{$\bigwedge$}}
\newcommand{\Ann}{\mathop{\mathrm{Ann}}}
\newcommand{\Pf}{\mathop{\mathrm{Pf}}}
\newcommand{\Span}{\mathop{\mathrm{span}}}
\newcommand{\col}[1]{\left(\!\!\begin{array}{c}#1\end{array}\!\!\right)}
\newcommand{\mat}[1]{\left(\!\begin{array}{cc}#1\end{array}\!\right)}
\newcommand{\tp}{^{\kern-1pt\top}\kern-1pt}
\newcommand{\od}{\overline\partial}
\newcommand{\pd}{\partial}
\newcommand{\hs}{\\[3pt]\phantom{mmm}}
\newcommand{\suml}{\textstyle\sum\limits}
\newcommand{\bJ}{\kern1.5pt\mathbb{J}\kern.5pt}
\newcommand{\bfW}{\mathbf{W}}

\newcommand{\bfy}{\mathbf{y}}
\newcommand{\bfz}{\mathbf{z}}
\newcommand{\sE}{\mathscr{E}}

\newcommand{\sJ}{\mathscr{J}}
\newcommand{\sZ}{\mathscr{Z}}
\newcommand{\Mz}{M(\bfz)}
\newcommand{\const}{\mathrm{constant}}
\renewcommand{\ge}{\geqslant}
\renewcommand{\le}{\leqslant}
\newcommand{\frc}[2]{\hbox{\large$\frac{#1}{#2}$}}

\setlength{\oddsidemargin}{0.25in}
\setlength{\evensidemargin}{0.25in} 
\setlength{\textwidth}{6in}
\setlength{\topmargin}{-0in} 
\setlength{\textheight}{8.5in}
\numberwithin{equation}{section}
\numberwithin{theorem}{section}

\newcommand{\pzbj}{\partial / \partial \bar{z}_j}
\newcommand{\pzi}{\partial / \partial z^i}

\newcommand{\bW}{\bar{W}}
\newcommand{\bz}{\bar{z}}

\newcommand{\cc}{\mathbb{C}}
\newcommand{\rr}{\mathbb{R}}
\newcommand{\cp}{\mathbb{P}}
\newcommand{\PP}{\mathbb{P}}
\renewcommand{\ge}{\geqslant}
\renewcommand{\le}{\leqslant}
\newcommand{\bxi}{\hbox{\boldmath$\xi$}}
\newcommand{\bet}{\hbox{\boldmath$\eta$}}
\newcommand{\bzt}{\hbox{\boldmath$\zeta$}}
\newcommand{\sbxi}{{\lower2pt\hbox{\scriptsize\boldmath$\xi$}}}
\begin{document}

\bibliographystyle{amsalpha} 
\title[Orthogonal complex structures]{Twistor geometry and warped product\\[3pt] 
orthogonal complex structures} 
\author{Lev Borisov}
\thanks{Research of the first author partially supported by NSF Grant DMS-0758480}
\address{Department of Mathematics, University of Wisconsin, Madison, 
WI, 53706}
\email{borisov@math.wisc.edu}
\author{Simon Salamon}
\thanks{Research of the second author partially supported by MIUR (Metriche Riemanniane e
   Variet\`a Differenziabili, PRIN\,05)}
\address{Simon Salamon, Dipartimento di Matematica, Politecnico di
  Torino, Corso Duca degli Abruzzi 24, 10129 Torino, Italy.}
\email{salamon@calvino.polito.it}
\author{Jeff Viaclovsky}
\address{Department of Mathematics, University of Wisconsin, Madison, 
WI, 53706}
\email{jeffv@math.wisc.edu}
\thanks{Research of the third author partially supported by NSF Grant DMS-0804042}

\begin{abstract} The twistor space of the sphere $S^{2n}$ is an
  isotropic Grassmannian that fibers over $S^{2n}$. An orthogonal
  complex structure on a subdomain of $S^{2n}$ (a complex
  structure compatible with the round metric) determines a section of
  this fibration with holomorphic image. In this paper, we use this
  correspondence to prove that any finite energy orthogonal complex
  structure on $\rr^6 \subset S^6$ must be of a special \wp\ form, and
  we also prove that any orthogonal complex structure on $\rr^{2n}$
  that is asymptotically constant must itself be constant. We will
  also give examples defined on $\rr^{2n}$ which have infinite energy,
  and examples of non-standard orthogonal complex structures on flat
  tori in complex dimension three and greater.
\end{abstract}

\date{May 22, 2009}
\maketitle
\setcounter{tocdepth}{1}
\vspace{-5mm}
\tableofcontents
\vspace{-10mm}
\parskip1pt

\section{Introduction}

An orthogonal complex structure on a Riemannian manifold $(M,g)$ is a
complex structure which is integrable and is compatible with the
metric $g$. In a previous paper of the second and third authors, the
case of domains in $\rr^4$ with the Euclidean metric was considered,
and various Liouville-type theorems were proved
\cite{SalamonViaclovsky}.  In particular, it was shown that if $J$ is
an orthogonal complex structure on $\mathbb{R}^4 \setminus K$, where
$K$ is a closed set with $\mathcal{H}^1(K) = 0$ (vanishing
$1$-dimensional Hausdorff measure), then $J$ is conformally equivalent
to a constant OCS. This generalized an earlier result of Wood
\cite{WoodIJM} and equivalent arguments of LeBrun--Poon
\cite{LeBrunPoon}.  

In this paper, we consider the case of higher-dimensional Euclidean
spaces, and then focus on the case of $\rr^6$. The 4-dimensional
Liouville theorem does not directly generalize to higher dimensions.
Indeed, the twistor construction over $\rr^4$ itself gives rise to an
orthogonal complex structure on $\rr^6$ that is not conformally
constant, see Remark~\ref{4dt} below. Global examples of OCSes 
on $\rr^{2n}$ for $n\ge 3$ which are not conformally constant were described
explicitly by Baird--Wood \cite{BairdWood1995} in the context of
harmonic morphisms (see \cite{Salamon1985,BairdWood,GudmundssonWood}
for other relevant links with harmonic maps and morphisms).

We next discuss a method for writing down many examples of global
OCSes on $\rr^6$. Endow $\rr^{6} = \cc^3$ with complex coordinates
$(z^1, z^2, z^3)$, and consider an orthogonal almost complex structure
of the form
\begin{align}
\label{wpoi}
J = J_1(z^3) \oplus J_0,
\end{align}
where $J_1(z^3)$ is an OCS on the space $\rr^4$ for which $z^3$ is
constant, and $J_0$ is the standard OCS on $\rr^2$ with coordinate
$z^3=x^3 + i y^3$. Any OCS on $\rr^4$ is necessarily constant.
Moreover, the OCSes on $\rr^4$ consistent with a fixed orientation are
parametrized by $\SO(4)/\U(2)$, the complex projective line $\cp^1$,
so $J_1$ can be regarded as a map $f:\cc\rightarrow\cp^1$. If $f$ is
holomorphic, then (\ref{wpoi}) is integrable and so defines a global
OCS on $\rr^6$. It is an example of a \emph{\wp\ orthogonal complex
  structure}, as defined in Section~\ref{warped}. While a \wp\ OCS on
$\rr^6$ is determined by a single meromorphic function, these
structures become much more complicated in higher dimensions; this
will be discussed further in Section 4.

The differential of any conformal map $\psi:\rr^6\setminus\{p\}
\rightarrow\rr^6$ lies in ${\rm{CO}}(6) = \rr_+\!\times{\rm{O}}(6)$,
thus the conformal group ${\rm{O}}_{+}(1,7)$ (time-oriented Lorentz
transformations) acts on the space of OCSes on subdomains of $\rr^6$
by conjugation by the differential $\psi_*$.  The round metric on $S^6
\setminus \{\infty\} = \rr^6$ is conformally equivalent to the
Euclidean metric $\geu$. Thus if $J$ is an OCS defined on $\rr^6$ away
from finitely many points, we can equivalently view $J$ as an OCS on
$S^6$ away from finitely many points.
\begin{definition}{\em
Let $J$ be an orthogonal complex structure  
defined on $\Omega = S^6 \setminus K$ where $K$ is a finite set of points. 
Then $J$ is said to have {\em{finite energy}} if 
\begin{align}
\label{finiteenergy}
\int_{S^6 \setminus K}\|\nabla J\|^6 dV < \infty,
\end{align}
where the covariant derivative, norm and volume form are taken with
respect to the round metric on $S^6$.}
\end{definition}
The main result in this paper shows that the above 
\wp\ construction gives all of the finite energy OCSes on $\rr^6$,
up to conformal equivalence:
\begin{theorem}
\label{t2}
Let $J$ be an orthogonal complex structure of class $C^1$ on $S^{6}
\setminus K$ with finite energy, where $K$ is a finite set of points.
\begin{enumerate}
\item[(i)]
$J$ is conformally equivalent to a \wps globally defined on
$\rr^6 = S^6 \setminus \{\infty\}$ with the correct orientation, and
determined by a rational function $f:\cc \rightarrow
\cp^1$. 
\item[(ii)]
$J$ is conformally equivalent to the standard
orthogonal complex structure on $\rr^6 = \cc^3$ if and only if $f$ is
constant. 
\item[(iii)] 
$(\rr^6, J)$ is biholomorphic to
$\cc^3$, and $(\rr^6,\geu,J)$ is cosymplectic (the K\"ahler form 
is co-closed), but is not locally conformal 
to a K\"ahler metric, in particular, it is not K\"ahler 
unless $f$ is constant.
\end{enumerate}
\end{theorem}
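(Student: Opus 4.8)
The plan is to recast the problem through the twistor correspondence, convert the finite-energy hypothesis into an algebro-geometric finiteness statement, classify the resulting varieties, and finally deduce (ii) and (iii). First, elliptic regularity (Newlander--Nirenberg) upgrades the $C^1$ structure to a smooth, hence real-analytic, OCS, so its twistor lift is available. Let $\pi\colon Z\to S^6$ be the twistor fibration: $Z$ is the isotropic Grassmannian, a smooth projective variety of complex dimension $6$ with a canonical integrable complex structure, and $\pi$ has fibre $\cp^3$ over each point. Integrability of $J$ on $\Omega=S^6\setminus K$ is equivalent to its twistor lift $s\colon\Omega\to Z$ having complex-analytic image $\Sigma=s(\Omega)$ of complex dimension $3$. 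A direct Jacobian computation bounds the $6$-dimensional volume of $\Sigma$ by $C\,\mathrm{vol}(S^6)+C\int_\Omega\|\nabla J\|^6\,dV$, so finite energy makes $\Sigma$ a complex-analytic subset of $Z\setminus\pi^{-1}(K)$ of finite volume. Since $\pi^{-1}(K)$ is a finite union of $\cp^3$'s, hence a proper analytic subset of $Z$, Bishop's extension theorem shows the closure $\bar\Sigma$ is a closed complex-analytic subset, and Chow's theorem makes it a projective subvariety; it is irreducible because $\Omega$ is connected, and over $S^6\setminus K$ it is the graph of $J$, hence meets the generic twistor fibre transversally in a single point.

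The core of the proof is then the classification of such $\bar\Sigma$. Because $\bar\Sigma$ meets the generic twistor fibre in one point, its class in $H^6(Z;\mathbb Z)$ is determined up to a single nonnegative integer $a$ (a degree), with $a=0$ the constant OCS, whose twistor image is a linear $\cp^3\subset Z$. Using the transitive action of the conformal group $\mathrm{O}_+(1,7)$ on $S^6$ --- acting on $Z$ by biholomorphisms preserving its real structure --- I would normalize so that $\infty\in K$ and the fibre over $\infty$ sits in standard position, and then, exploiting that $\bar\Sigma$ restricts to a genuine section over the real locus $S^6\setminus K$ and is compatible with the real structure of $Z$, produce a rational fibration $\bar\Sigma\dashrightarrow\cp^1$ whose fibres are the twistor images of the constant OCSes carried by the $\rr^4$-slices $\{z^3=\mathrm{const}\}$. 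This identifies $\bar\Sigma$ with the closure of the graph of a \wps $J_1(z^3)\oplus J_0$ and $J_1$ with a holomorphic map $f\colon\cc\to\cp^1$ (the correct orientation being built into $Z$). Finiteness of the energy then forces $f$ to have finite Dirichlet energy, so its graph in $\cc\times\cp^1$ has finite area and, by the same sort of removable-singularity argument, extends to an algebraic curve in $\cp^1\times\cp^1$; i.e.\ $f$ is rational. I expect the extraction of this $\cp^1$-fibration --- showing that an a priori arbitrary section-type $3$-fold in $Z$ must split off the $z^3$-direction --- to be the main obstacle; the removable-singularity step is delicate only because $\pi^{-1}(K)$ is not of lower dimension than $\bar\Sigma$, which is why the irreducibility observation is needed.

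For (iii) the computations are direct on the normal form $J=J_1(z^3)\oplus J_0$ with $J_1$ holomorphic in $z^3$. Its fundamental form is $\omega_J=\omega_{J_1(z^3)}+\tfrac i2\,dz^3\wedge d\bar z^3$; because $\omega_{J_1(z^3)}$ has constant coefficients on each $\rr^4$-slice and $\partial_{\bar z^3}\omega_{J_1}=0$ by holomorphicity of $f$, one gets $d\omega_J=dz^3\wedge\partial_{z^3}\omega_{J_1}$, while ${*}\omega_J=\tfrac12\,\omega_J\wedge\omega_J$ with $\omega_{J_1}\wedge\omega_{J_1}$ independent of $z^3$, so $d{*}\omega_J=dz^3\wedge\partial_{z^3}\omega_{J_1}\wedge\tfrac i2\,dz^3\wedge d\bar z^3=0$: the K\"ahler form is co-closed. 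Also $d\omega_J=0$ exactly when $\partial_{z^3}\omega_{J_1}=0$, i.e.\ when $f$ is constant. Since in real dimension $6$ the Lee form of a locally conformally K\"ahler structure is a fixed multiple of $J\,\delta\omega$, the vanishing of $\delta\omega_J$ forces any locally conformally K\"ahler structure with fundamental form $\omega_J$ to be K\"ahler; hence for non-constant $f$ the metric $(\rr^6,\geu,J)$ is not locally conformally K\"ahler, in particular not K\"ahler. Finally $(\rr^6,J)$ is biholomorphic to $\cc^3$: one writes explicit global holomorphic coordinates --- $z^3$ together with two functions built from $z^1,z^2$ and $f(z^3)$, with the denominators of $f$ cleared so the coordinates are everywhere regular --- and checks the resulting map to $\cc^3$ is a biholomorphism.

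Part (ii) is then a corollary. If $f$ is constant, $J=J_1\oplus J_0$ is a constant OCS, hence lies in the $\mathrm{O}(6)$-orbit of the standard one and is conformally (indeed isometrically) equivalent to it. Conversely, the standard OCS is K\"ahler, hence locally conformally K\"ahler, and local conformal K\"ahlerness is a conformal invariant, so by (iii) a \wps with non-constant $f$ cannot be conformally equivalent to the standard OCS. Thus $f$ constant is both necessary and sufficient.
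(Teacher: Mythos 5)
Your overall strategy (twistor lift, finite energy $\Rightarrow$ finite area, Bishop $+$ Chow, then a classification of the resulting bidegree-$(1,p)$ threefold) is the paper's strategy, and your treatment of (ii) and (iii) is essentially sound modulo small slips. But the heart of the theorem is precisely the step you defer: showing that an algebraic threefold in $\Q^6$ of bidegree $(1,p)$, meeting all but finitely many twistor fibers in a single point, must be (after a conformal transformation) the closure of the graph of a \wp\ OCS. You write that you would ``produce a rational fibration $\bar\Sigma\dashrightarrow\cp^1$'' by exploiting conformal normalization and ``compatibility with the real structure of $Z$,'' and you yourself flag this as the main obstacle --- but no argument is given, and the premise is flawed: the closure of the graph of $J$ is \emph{not} preserved by the real (antiholomorphic) structure of the twistor space, whose fiberwise action is the antipodal map and which carries the graph of $J$ to the graph of $-J$; so no fibration can be extracted that way. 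What actually fills this gap in the paper is the classification of irreducible $(1,p)$ threefolds in $\Q^6$ (horizontal $\cp^3$, smooth quadric threefold, cone over the Veronese surface, or a Weil divisor in the rank-$4$ singular quadric $\Q^4_s$), followed by a delicate geometric analysis: the graph closure must meet some fiber in a $\cp^2$ (via $H^4(S^6)=0$), the quadric and Veronese cases are excluded because they contain no $\cp^2$, and in the remaining case one must show the singular line of $\Q^4_s$ lies in the fiber at infinity, that otherwise one gets a real two-dimensional family of exceptional fibers, and that the variety is a double cone --- whence, using transitivity of $\SU(4)$ on flags in the fiber, the normal form $\xi_{13}=\xi_{23}=0$, i.e.\ a \wp. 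None of this is present or replaceable by the section-plus-reality heuristic, so part (i) is unproved as it stands. (Rationality of $f$ then comes for free from algebraicity of the closure, so your separate Dirichlet-energy argument is unnecessary.)

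Two smaller points. In (iii), your claim $\partial_{\bar z^3}\omega_{J_1}=0$ is false: $\omega_{J_1}$ depends on $\overline{f(z^3)}$ as well as $f(z^3)$ (its coefficients are real expressions in $\xi_{12}$ and $\bar\xi_{12}$), so $d\omega_{J_1}$ has both $dz^3$ and $d\bar z^3$ components; the cosymplectic conclusion survives because every such term is annihilated upon wedging with $dz^3\wedge d\bar z^3$, which is the paper's actual argument, but your stated reason is wrong, and likewise ``$d\omega_J=0$ iff $\partial_{z^3}\omega_{J_1}=0$'' should be ``iff $\omega_{J_1}$ is independent of $z^3,\bar z^3$.'' For (ii), your route (LCK is a conformal and local invariant, cosymplectic $+$ LCK $\Rightarrow$ K\"ahler $\Rightarrow$ $f$ locally constant, hence constant by the identity theorem) is a legitimate alternative to the paper's degree count $\deg f=p=\deg_{\cp^7}X-1$ with the conformal group acting linearly, provided you state the localized version of the LCK$\Rightarrow$constant argument, since the conformal map may only be defined off a point.
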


Taking $f:\cc\rightarrow\cp^1$ to be a non-algebraic meromorphic
function in the \wp\ construction, we find examples of infinite
energy OCSes globally defined on $\rr^6$. Taking products of these
with the standard OCS on $\rr^{2k}$, one obtains examples in all
higher dimensions which violate any reasonable finite energy
assumption.

If in particular we choose a doubly-periodic meromorphic function on
$\cc$, we find the following examples of non-standard complex flat
tori.
\begin{theorem}
  \label{torusex} Let $(T^2, J_2)$ be an elliptic curve with a
  compatible flat metric $g_2$, and let $(T^{4}, g_4)$ be a flat $4$-torus.  
  There is
  an infinite-dimensional space of complex structures on the $6$-torus
  $(T^6, g_6) = (T^4 \times T^2, g_4 \oplus g_2)$ which are orthogonal
  relative to the product metric. These structures are of \wp\ form,
  determined by a meromorphic function $f : (T^2, J_2) \rightarrow
  \cp^1$. Lifting to $\rr^{6}$, they have infinite energy.
\end{theorem}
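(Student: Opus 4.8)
The plan is to realize the structures of Theorem~\ref{torusex} as warped product orthogonal complex structures of the form (\ref{wpoi}) that happen to be lattice-periodic, so that they descend to $T^6$, and then to check integrability, orthogonality, infinite-dimensionality, and the energy claim one at a time. Write $T^4=\rr^4/\Lambda_4$ and $T^2=\cc/\Lambda_2$, where $\cc=\rr^2$ carries the complex coordinate $z^3$, the standard complex structure $J_0=J_2$, and the flat metric $g_2$. After a linear change of coordinates on $\rr^4$ we may assume the constant form $g_4$ is the standard Euclidean one; fixing in addition an orientation, the orthogonal complex structures on $(\rr^4,g_4)$ compatible with it form the twistor line $\cp^1=\SO(4)/\U(2)$, as in the Introduction. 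For a map $f\colon\cc\to\cp^1$ let $J_f=J_1(z^3)\oplus J_0$ be the associated structure (\ref{wpoi}) on $\cc^3$.

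The first point is the descent condition. Each $J_1(z^3)$ is a \emph{constant} endomorphism of $\rr^4$, hence invariant under translation by $\Lambda_4$, and $J_0$ is invariant under translation by $\Lambda_2$; therefore $J_f$ is the pullback of a well-defined almost complex structure on $T^6=T^4\times T^2$ if and only if $z^3\mapsto J_1(z^3)$ is $\Lambda_2$-periodic, that is, if and only if $f$ descends to a meromorphic function $f\colon(T^2,J_2)\to\cp^1$. Integrability and orthogonality then come for free: by the warped product construction recalled in Section~\ref{warped}, $J_f$ is integrable on $\cc^3$ whenever $f$ is holomorphic, and integrability is a local condition, so it passes to $T^6$; and since $J_1(z^3)$ is $g_4$-orthogonal and $J_0$ is $g_2$-orthogonal at each point, $J_f=J_1(z^3)\oplus J_0$ is orthogonal for $g_6=g_4\oplus g_2$ on $\rr^6$, hence on $T^6$. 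So every meromorphic $f\colon(T^2,J_2)\to\cp^1$ yields an orthogonal complex structure on $(T^6,g_6)$ of warped product form, the constant functions recovering the standard product structure.

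Next I would argue that these fill out an infinite-dimensional family. The assignment $f\mapsto J_f$ is injective, since $f$ is recovered as the $J_1$-component of $J_f$, and the space of meromorphic functions on the elliptic curve $(T^2,J_2)$ is infinite-dimensional: for any single non-constant meromorphic $g$, the functions $1,g,g^2,\dots$ are linearly independent over $\cc$, since a polynomial relation among them would force $g$ to take only finitely many values. Finally, for the energy statement, fix a non-constant $f$ and let $\widetilde J$ be the corresponding warped product structure on $\rr^6$. Being the pullback of $J_f$ under $\rr^6\to T^6$, the density $\|\nabla\widetilde J\|^6\,dV$ computed in the flat metric is invariant under $\Lambda_4\oplus\Lambda_2$, and it is not identically zero because $z^3\mapsto J_1(z^3)$ is non-constant; summing its integral over the infinitely many fundamental domains gives $\int_{\rr^6}\|\nabla\widetilde J\|^6\,dV=\infty$. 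The same holds with respect to the round metric, the conformal change relating it to the Euclidean metric not affecting whether the energy is finite; this is also consistent with Theorem~\ref{t2}(i), since a non-constant doubly-periodic function cannot be rational.

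I do not expect a deep obstacle: the content is essentially the bookkeeping of the descent. The step needing the most care is pinning down that $\Lambda_2$-periodicity of $f$ is the exact condition for $J_f$ to descend and that it coincides with "$f$ is meromorphic on the elliptic curve", while keeping the twistor $\cp^1$ attached to the \emph{given} flat metric $g_4$ rather than to a preferred Euclidean one. The only other delicate point is the energy claim: I would prove infinite energy directly from periodicity rather than by quoting Theorem~\ref{t2}, since a priori the lift could conceivably be conformally equivalent to some rational (finite energy) warped product — the direct argument over the infinitely many fundamental domains sidesteps this.
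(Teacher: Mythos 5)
Your construction and the infinite-dimensionality argument follow the paper's own route (Subsection~\ref{warptori} together with Proposition~\ref{wpint}): a structure $J_1(z^3)\oplus J_0$ with $J_1(z^3)$ a constant OCS on each $\rr^4$-slice, descending to $T^6$ exactly when $z^3\mapsto J_1(z^3)$ is lattice-periodic, i.e.\ a holomorphic map $(T^2,J_2)\to\cp^1$, and the space of such maps is infinite dimensional. The genuine divergence is in the infinite-energy claim, and that is also where you have a gap. The paper does not estimate anything: it observes that a non-constant doubly periodic meromorphic function is non-algebraic, while finite energy forces the closure of the lifted graph to be an algebraic threefold (Bishop and Chow, Section~\ref{finalsec}), hence forces $f$ to be rational; so using Theorem~\ref{t2} here would not be circular---it is exactly the paper's argument. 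Your direct computation of the Euclidean energy over $\rr^6$ (periodic, not identically zero density, infinitely many fundamental domains) is fine, but the sentence ``the conformal change relating it to the Euclidean metric not affecting whether the energy is finite'' is not automatic and is the step that actually needs proof: the energy \eqref{finiteenergy} is computed with the round metric's Levi--Civita connection, and under $g\mapsto e^{2u}g$ the covariant derivative changes by terms of size $|du|\,\|J\|$, so the two $6$-energy densities differ by terms controlled by $|du|^6\,dV$; for the stereographic factor relating $\geu$ to the round metric one has $|du|_{\geu}\sim 2/|x|$, and $\int_{\rr^6}|du|_{\geu}^6\,dV$ is itself divergent, so finiteness of the $6$-energy is not a manifestly conformally invariant condition and infinite Euclidean energy does not formally transfer to the round metric.

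The gap is easily closed for these examples, so your approach can be salvaged and is then more elementary than the paper's. Since $\nabla^{S}J=\nabla^{\geu}J+D$ with $|D|_{\geu}\le C|du|_{\geu}$, and a $(1,2)$-tensor satisfies $\|T\|_{e^{2u}g}=e^{-u}\|T\|_{g}$ while $dV_{e^{2u}g}=e^{6u}dV_{g}$, one gets the pointwise lower bound
\begin{align*}
\|\nabla^{S}J\|_{g_S}^6\,dV_{g_S}\ \ge\ \bigl(\|\nabla^{\geu}J\|_{\geu}-C|du|_{\geu}\bigr)^6\,dV_{\geu}
\end{align*}
wherever the bracket is nonnegative. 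Because $\|\nabla^{\geu}J\|_{\geu}$ depends only on $z^3$, is periodic and is bounded below by some $c_0>0$ on a periodic set of infinite Lebesgue measure, while $|du|_{\geu}\to0$ at infinity, the right-hand side is bounded below by a positive constant on a set of infinite measure, and the round-metric energy diverges. With that estimate supplied your proof is correct; as written, the conformal-comparison step is asserted rather than proved, which is precisely what the paper's algebraicity argument is designed to avoid.
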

We note that, just as in Theorem~\ref{t2} (iii), these tori are cosymplectic 
but not K\"ahler for non-constant $f$, see Proposition \ref{cosymp} below. 
A similar construction yields non-standard examples on tori in all higher even 
dimensions, see Subsection \ref{warptori}.  
However, such examples do not exist in real dimension four. Locally
conformally flat compact Hermitian surfaces have been classified by
Pontecorvo \cite{Pontecorvo}.  In the flat case, the OCS must lift to
a constant OCS on $\rr^4$, thus the space of OCSes on a flat $4$-torus
is finite dimensional. 

A complete classification of finite energy OCSes as in Theorem
\ref{t2} in higher dimensions is much more difficult. We do not
attempt such a general classification in this paper, but our work does
lead to some natural questions that may point the way towards a
possible solution of the problem. In any case, we prove a Liouville
theorem under a stronger assumption:
\begin{definition}{\em
 Let $J$ be an orthogonal complex structure 
defined on $\rr^{2n}$. We say that $J$ is \emph{asymptotically constant} if
\begin{align*}
  \|J(\mathbf{x}) - J'\|\to 0 \mbox{ as }\mathbf{x}\to\infty,
\end{align*}
for some constant orthogonal complex structure $J'$. Here, the norm
refers equally to the Euclidean or round metric, as any conformal
factor cancels out.}

\end{definition}
Assuming this condition, we have the following Liouville theorem in
all even dimensions.
\begin{theorem}
\label{t3}
Let $J$ be an orthogonal complex structure of class $C^1$ on
$\mathbb{R}^{2n}$ which is asymptotically constant. Then $\pm J$ is
isometrically equivalent to the standard constant orthogonal complex
structure on $\rr^{2n}$.
\end{theorem}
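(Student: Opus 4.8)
The plan is to realise $J$ via the twistor correspondence of the Introduction and to show that asymptotic constancy forces the holomorphic image of $J$, once it is closed up across the single fibre over $\infty$, to be a projective algebraic subvariety of the twistor space $Z$ of a very restricted kind. Write $\pi\colon Z\to S^{2n}$ for the twistor fibration, with $Z$ the isotropic Grassmannian of the Introduction (in particular smooth and projective) and fibre $F=\SO(2n)/\U(n)$. Over the chart $\rr^{2n}=S^{2n}\setminus\{\infty\}$ the fibration is trivial, $\pi^{-1}(\rr^{2n})\cong\rr^{2n}\times F$, and (a $C^{1}$ OCS being automatically smooth, indeed real-analytic, by standard regularity for integrable complex structures) $J$ corresponds to its graph $\Sigma=\{(\mathbf x,J(\mathbf x))\}$, a closed $n$-dimensional complex submanifold of $\pi^{-1}(\rr^{2n})$.

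Next I would determine the closure $\overline{\Sigma}$ of $\Sigma$ in $Z$ near $\pi^{-1}(\infty)$ by passing to the opposite chart via the inversion $\iota(\mathbf x)=\mathbf x/|\mathbf x|^{2}$, whose differential is a positive multiple of the reflection $H_{\mathbf x}=I-2\,\mathbf x\mathbf x\tp/|\mathbf x|^{2}$. Since $J(\mathbf x)\to J'$, as $\mathbf x\to\infty$ along a unit vector $v$ the value of the section in this chart tends to $H_{v}J'H_{v}$, and a short computation shows $H_{v}J'H_{v}$ is unchanged when $v$ is replaced by $(\cos\theta)v+(\sin\theta)J'v$; hence it depends only on the $J'$-complex line through $v$, so that $\overline{\Sigma}\cap\pi^{-1}(\infty)=\{H_{v}J'H_{v}:v\in S^{2n-1}\}$ is the image of a map from $\cp^{n-1}$. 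One checks that this set is a complex-analytic subset $E\subset\pi^{-1}(\infty)$ of dimension $n-1$ — indeed it is exactly the intersection with $\pi^{-1}(\infty)$ of the model variety $\overline{\Sigma'}$ attached to the constant structure $J'$. The role of the asymptotic hypothesis is precisely that it cuts $E$ down to this $(n-1)$-dimensional set; for a general OCS on $\rr^{2n}$ with $n\ge3$ the analogous set can have dimension as large as $\dim F$, which exceeds $n$.

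Now $\Sigma$ is a closed, pure $n$-dimensional analytic subset of $Z\setminus E$ with $\dim E=n-1<n$, so by the Remmert--Stein extension theorem $\overline{\Sigma}$ is an analytic subvariety of $Z$, and Chow's theorem makes it an irreducible (as $\rr^{2n}$ is connected) projective algebraic subvariety of dimension $n$. Moreover, being a section, $\Sigma$ meets a generic twistor fibre transversally in one point, so $\overline{\Sigma}$ meets the fibre class of $\pi$ once. The theorem then reduces to the assertion that such an $\overline{\Sigma}$, subject in addition to $\overline{\Sigma}\cap\pi^{-1}(\infty)=E$, must be the model variety $\overline{\Sigma'}$: once that is known, $\Sigma=\Sigma'$, hence $J\equiv J'$, and since all constant OCS's on $\rr^{2n}$ are carried into one another by $\SO(2n)$ up to sign, $\pm J$ is isometrically equivalent to the standard structure. (For $n=1$ this is trivial; for $n=2$ it follows from the known $\rr^{4}$ theorem — ``meets the fibre once'' forces $\overline{\Sigma}$ to be a hyperplane, so $J$ is conformally constant, and being globally defined and asymptotically constant it is then constant — so the content lies in $n\ge3$.)

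The main obstacle is this last, rigidity, step. ``Algebraic, $n$-dimensional, meeting the twistor fibres once'' is not enough by itself: warped product OCS's and the Baird--Wood examples produce such cycles that are not conformally constant. What must be used is that $E$ — the ``boundary at infinity'' of $\overline{\Sigma}$ — has been pinned to the small $\cp^{n-1}$ of a single $J'$; since $E$ in turn determines $J'$, it suffices to show that $\overline{\Sigma}$ is determined by its intersection with $\pi^{-1}(\infty)$ together with the section property. I would attempt this either through the explicit Chow ring and Schubert calculus of $Z=\SO(2n+2)/\U(n+1)$, classifying the relevant cycle classes and the subvarieties representing them that contain a fixed $\cp^{n-1}$, or analytically: once $\overline{\Sigma}$ is algebraic, $B:=J-J'$ is a pole-free rational matrix function that is $O(|\mathbf x|^{-1})$ at infinity, and feeding this decay together with the integrability equations into a $\od$-solution/Hartogs/Liouville argument on $\cc^{n}$ forces $B\equiv0$ near $\infty$, after which real-analyticity of integrable complex structures propagates $J\equiv J'$ to all of $\rr^{2n}$. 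In either guise this is where the difficulty is concentrated; the twistor lift, the closure computation identifying $E$, and the Remmert--Stein/Chow/$\SO(2n)$ steps are routine by comparison.
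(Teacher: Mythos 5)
Your setup (twistor lift, identification of the boundary over $\infty$ as the $\cp^{n-1}$ attached to $J'$, Remmert--Stein to make the closure a variety, reduction to showing the closure equals the model variety of $J'$) tracks the paper's Propositions in Section~\ref{finding} quite closely. But the proposal stops exactly where the theorem lives: the rigidity step is not proved, only flagged, and the two strategies you sketch for it are not viable as stated. The Schubert-calculus route asks for a classification of all degree-one-over-the-fibres algebraic $n$-folds in $\Z{n+1}$ containing a fixed $\cp^{n-1}$; that is essentially the hard classification problem which the paper only carries out in dimension $6$ (via \cite{BV}) for Theorem~\ref{texc}, and which it deliberately \emph{avoids} for Theorem~\ref{t3}. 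The analytic route is unsubstantiated: algebraicity of the closure $\X=\overline{J(\rr^{2n})}\subset\cp^{2N-1}$ does not make $B=J-J'$ a rational (pole-free) function of $\mathbf{x}$, because the twistor projection and the fibre coordinates involve both $z$ and $\bar z$ (cf.\ \eqref{Wi}, \eqref{zzz}); so the ``rational + decay $\Rightarrow 0$'' Liouville step has no justification, and no $\od$/Hartogs argument is actually carried out.

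The paper closes this gap by a different and much shorter mechanism, which your proposal never invokes: asymptotic constancy gives not just the boundary set $E$ but a \emph{homeomorphism} of $\X$ with $\cp^n$ (the graph and the model graph hit every finite fibre in one point and acquire the same $\cp^{n-1}$ at infinity). Hence $\X$ satisfies Poincar\'e duality with $H^2(\X)$ of rank one generated by the class of $\X_0=\cp^{n-1}$; writing the hyperplane class as $H=k\X_0$ and using $H^{n-1}\cup\X_0=1$ forces $k=1$, so $H^n=1$ and $\X$ has degree $1$, i.e.\ it is a \emph{linear} $\cp^n$ in $\cp^{2N-1}$. A final elementary linear-algebra argument with the coordinates $W_{i_1\dots i_q}$ of \eqref{Wdef} shows that the only linear $\cp^n$ inside $\Z{n+1}$ containing the prescribed $\cp^{n-1}$ and meeting the fibre over $0$ correctly is the one attached to $\bJ$, whence $J\equiv\bJ$ up to the normalizations already made. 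So the missing idea in your write-up is precisely this topological input (homeomorphism to $\cp^n$, hence Poincar\'e duality and degree one), which converts the rigidity question you correctly isolated from a classification problem into a two-line intersection-theory computation; without it, or a completed substitute, the proposal does not prove the theorem.
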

Theorem \ref{t3} will be proved in Section \ref{finding}. We next give
a brief outline of the proof of Theorem \ref{t2}. We use the twistor
fibration $\cp^3\rightarrow\Q^6\rightarrow S^6$, which was studied in
particular detail by Slupinski \cite{Slupinski}. The complex
$6$-quadric $\Q^6$ fibers over $S^6$, with fiber the complex
projective space $\cp^3$ that can be identified with $SO(6)/U(3)$, and
local sections are orthogonal almost complex structures compatible
with a fixed orientation. Such a section is integrable precisely when
the graph is a holomorphic subvariety. For simplicity, assume that $J$
is an OCS on $\rr^6 = S^6 \setminus \{ \infty \}$, so that its graph
$J(\rr^6)$ lies in $\Q^6\setminus \cp^3_{\infty}$, where
$\cp^3_\infty=\pi^{-1}(\infty)$ is the fiber over the point at
infinity.

Consider the closure $\X = \overline{J(\rr^6)} \subset \Q^6$.  The
finite energy assumption (\ref{finiteenergy}) implies that the graph
of $J$ has finite area. This in turn implies that its closure is a
$3$-dimensional analytic subvariety, by a theorem of Bishop
\cite{Bishop}. Moreover, by Chow's Theorem, it is algebraic
\cite{Chow}. Now, any $3$-dimensional subvariety $\X \subset \Q^6$ has
a bidegree $(q,p)$; see Section \ref{linears}.  Since our $\X$ arises
from an OCS, it hits generic twistor fibers in one point, and this
implies that the bidegree of $\X$ in $\Q^6$ is $(1,p)$, and the degree
of $\X$ in $\cp^7$ is $p + 1$.  Theorem \ref{t2} will then be seen as
a consequence of the following result.
\begin{theorem}
\label{texc}
Let $\X$ be a threefold of type $(1,p)$ in $\Q^6$.
Then $\X$ yields an orthogonal complex structure maximally 
defined on $S^6 \setminus E$, where $E$ is a closed set with real dimension 
at least $2$ unless $\X$ corresponds
to a \wps globally defined on $\rr^6$.
\end{theorem}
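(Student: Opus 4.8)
The plan is to analyze the threefold $\X \subset \Q^6$ of bidegree $(1,p)$ via its intersection with the twistor fibers $\cp^3_x = \pi^{-1}(x)$ over points $x \in S^6$. Since $\X$ has bidegree $(1,p)$, a generic fiber meets $\X$ in exactly one point, which is precisely the condition that the projection $\pi|_\X : \X \to S^6$ is generically one-to-one, so it defines an OCS on a dense open set. The bad set $E$ is where this fails: either $\X$ misses the fiber entirely, or $\X$ meets it in more than one point (with multiplicity), or $\X$ contains a positive-dimensional piece of the fiber. The first step is to understand the structure of $E$. Using that $\X$ is algebraic and $\pi$ is a morphism, $E$ is a closed subvariety of $S^6$; the generic-one-point condition forces $\dim_{\cc} E \le 2$, but we want either $\dim_{\rr} E \ge 2$ (generic case) or $E$ is controlled enough that the OCS extends over all of $\rr^6$, which happens exactly in the warped product case. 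So the goal is: if $E$ has real dimension $\le 1$, i.e. $E$ is finite (after removing $\infty$), then $\X$ must be a warped product.

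The key step is a linear-algebraic / projective-geometric analysis of bidegree $(1,p)$ threefolds in $\Q^6$. Writing $\Q^6 \subset \cp^7$ and recalling the twistor description $\Q^6 = \SO(8)/(\SO(2)\times\SO(6))$ fibering over $S^6$, the bidegree $(1,\cdot)$ condition says that $\X$ lies in (is "linear in the first factor") — concretely, I expect that a $(1,p)$ threefold is cut out inside $\Q^6$ by the proportionality of two sections, equivalently $\X$ is the zero locus of a section of a rank-related bundle, and one can parametrize such $\X$ by data on $\cp^3$ together with a map. I would make this precise using the material promised in Section~\ref{linears} on bidegrees, decomposing $H^0(\Q^6,\mathcal{O}(1))$ under the twistor group and identifying the $(1,p)$ threefolds with, roughly, graphs of rational maps from $\cp^{1}$-like parameter spaces. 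The upshot I aim for: $\X$ is singular exactly along the locus where a certain determinant or rank condition degenerates, and the image of that locus under $\pi$ is $E$. When $E$ is finite, the degeneration is forced to happen only "at $\infty$" and along a single coordinate direction, which is exactly the algebraic incarnation of the ansatz $J = J_1(z^3)\oplus J_0$ with $f$ rational.

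Concretely, the steps in order: (1) recall from Section~\ref{linears} the description of bidegree and reduce to showing: $\X$ of type $(1,p)$ with $\pi|_\X$ failing to be a bijection only over a finite set $\Rightarrow$ $\X$ is a warped product; (2) show the "excess intersection" locus $E' \subset \X$ (where fibers meet $\X$ non-transversally or in a subvariety) maps onto $E \subset S^6$, and that $E'$ is cut out by explicit algebraic equations coming from the bidegree-$(1,p)$ normal form; (3) impose $\dim_{\rr} \pi(E') \le 1$ and deduce that the polynomial data defining $\X$ has a special form — the "first factor" part is constant in two of the three complex directions, so the OCS is $J_1(z^3)\oplus J_0$; (4) identify the remaining freedom with a rational function $f : \cc \to \cp^1$ via the $\SO(4)/\U(2) = \cp^1$ parametrization of OCSes on $\rr^4$, and check that in this case $J$ extends smoothly over all of $\rr^6$ (so $E = \{\infty\}$, real dimension $0$); (5) conclude the dichotomy of the theorem statement.

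The main obstacle will be step (3): ruling out all the "intermediate" degenerations. A priori a $(1,p)$ threefold could be singular along a curve or surface in $\Q^6$ whose image in $S^6$ is still only finite or one-dimensional, without being a warped product — one must show that any such configuration actually forces the warped product structure, which amounts to a rigidity statement about how subvarieties of $\Q^6$ can project to $S^6$. I expect this requires carefully exploiting the holomorphicity of $\X$ together with the reality structure underlying the twistor fibration (the conjugation on $\Q^6$ whose fixed locus behavior controls the real points), to argue that a small-dimensional singular image is incompatible with being a genuine graph over a Zariski-dense set unless the defining data splits off the $z^3$ variable. Handling the behavior at $\infty$ — i.e. how $\X$ meets $\cp^3_\infty$ and ensuring "globally defined on $\rr^6$" really does mean $E = \{\infty\}$ and not something larger — is the secondary technical point to nail down.
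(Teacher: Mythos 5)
There is a genuine gap, and it sits exactly where you flag your ``main obstacle.'' Your steps (2)--(3) presuppose a normal form for bidegree $(1,p)$ threefolds (``cut out by the proportionality of two sections,'' ``graphs of rational maps from $\cp^1$-like parameter spaces''), but no such uniform normal form exists, and the rigidity you hope to extract from holomorphicity plus the real structure is precisely the content that the paper imports wholesale from the classification in \cite{BV}: every irreducible $(1,p)$ threefold in $\Q^6$ is, up to $\mathrm{PSO}(8,\cc)$, a horizontal $\cp^3$, a smooth $3$-quadric, the cone over the Veronese surface, or a Weil divisor in the rank-$4$ singular quadric $\Q^4_s$. Without this (or an equivalent structural result), your plan never gets off the ground: the dichotomy of the theorem is proved by running through these four cases, and the entire difficulty is concentrated in the last one, which your proposal does not see at all.

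Concretely, the paper's argument in the hard case goes: first, a topological step you omit --- since $H^4(S^6,\mathbb{Z})=0$ while the square of the hyperplane class is nonzero on $\X$, some twistor fiber meets $\X$ in complex dimension $\ge 2$, and by \cite[Proposition 3.6]{BV} that intersection contains a unique $\cp^2$; after a conformal motion this fiber is $F_\infty$. Then, for $\X$ a divisor in $\Q^4_s$, one shows the singular line $\tl$ of $\Q^4_s$ must lie in $F_\infty$ (a transversal hit of $\tl$ with any other fiber forces that fiber to be exceptional, via the cone-vertex/transversality contradiction of Remark~\ref{Ful}, and $\tl$ meeting a one-real-parameter family of fibers would give a $2$-dimensional exceptional set). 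Next, each plane $Q_\lambda=\overline{\X\cap(P_\lambda\setminus L)}$ not containing $\tl$ produces, by Lemma~\ref{cp2}, its own exceptional fiber $F_\lambda$, and $\lambda\mapsto F_\lambda$ is injective; so a small exceptional set forces \emph{every} $Q_\lambda$ to contain $\tl$, i.e.\ $\X$ is a double cone, which is then a genuine graph over $\rr^6$ and, after normalizing with the $\SU(4)$ action on $F_\infty$ and invoking \cite[Lemma 3.5]{BV}, has $\xi_{13}=\xi_{23}=0$ --- the warped product ansatz. None of this mechanism (the pencil $Q_\lambda$, the singular line, the injectivity $\lambda\mapsto F_\lambda$, the double-cone dichotomy) is present or replaceable by the ``determinant/rank degeneration'' heuristic in your step (2); as written, your proposal is a plan whose decisive steps are conjectural, so it does not constitute a proof.
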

This will be proved in Section~\ref{finalsec}, using the
classification of threefolds of order one in the $6$-quadric obtained
in \cite{BV}, some of the results of which are tailored to
applications in the present paper.

In closing the Introduction, we remark that the above theorem can be
applied to give a partial classification of locally conformally flat
Hermitian threefolds. There are also applications of our theorems
to the theory of harmonic maps from Euclidean spaces. These aspects
will be discussed in a forthcoming work.

\subsection{Acknowledgements}
The authors would like to thank Vestislav Apostolov, Olivier Biquard,
David Calderbank, Antonio Di Scala, Paul Gauduchon, Brendan Hassett,
Nigel Hitchin, Nobuhiro Honda, and Max Pontecorvo for useful conversations regarding
algebraic geometry, orthogonal complex structures, and twistor
theory. We also acknowledge Shulim Kaliman for useful conversations
regarding contractible complex algebraic varieties.
\section{Background}
\label{backsec}
\subsection{Complex structures, isotropic Grassmannians and spinors}
\label{ocssec}
 There is a bijective correspondence between the following objects:\hs
(i) points of the coset space $\Z n=\SO(2n)/\U(n)$,\hs (ii) constant
or linear OCSes on $\rr^{2n}$ consistent with a fixed orientation,\hs
(iii) skew-symmetric orthogonal matrices with Pfaffian equal to
$1$,\hs (iv) maximal isotropic subspaces in $\cc^{2n}$ inducing a
fixed orientation.\\[3pt] To formalize the correspondence between (i)
and (ii), first note that the isotropy subgroup $\U(n)$ of $\Z n$ may
be regarded as the stabilizer of a fixed OCS $\bJ$ on $\rr^{2n}$.

We may express $\bJ$ as a skew-symmetric orthogonal matrix of size
$2n$.  By reducing this matrix to standard block-diagonal form (as in
(\ref{bdf}) below), we see that $\det\bJ=1$ irrespective of the
induced orientation on $\rr^{2n}$. The latter is instead encoded in
the \emph{Pfaffian} of $\bJ$. Recall that the determinant of any
skew-symmetric matrix $M$ of size $2n$ can be written
\begin{align}
\label{Pf}
\det M = (\Pf M)^2,
\end{align}
where $\Pf M$ is a polynomial of degree $n$ in its entries. Standard
expressions for the Pfaffian show that $\Pf\bJ=1$ if and only if $\bJ$
induces a positive orientation on $\rr^{2n}$. 

Any other OCS will now equal $J=A\bJ A^{-1}$ for some $A\in O(2n)$,
and $\Pf J=(\det A)(\Pf\bJ)$, so to preserve orientation we must take
$\det A=1$.  We can then map $J$ to the coset $A\U(n)\in\Z n$.  For
the description in (iv), we associate to $J$ its $+i$-eigenspace
$T^{1,0}$, a totally isotropic subspace of $\cc^{2n}$.  We shall take
the remaining mechanics of this correspondence for granted, and
explain instead how spinors can be used to add a fifth class of
objects to the list above.

Consider the complex representation 
\[\Delta=\Delta_+\oplus\Delta_-\] 
of $\Spin(2n)$, where each irreducible summand $\Delta_\pm$ has
dimension $2^{n-1}$. Given a non-zero spinor $\phi$ in $\Delta_+$,
\begin{align}
\label{Vpsi}
V_\phi = \{v \in \cc^{2n} : v \cdot \phi = 0 \}
\end{align}
is an isotropic subspace, where $\cdot$ denotes Clifford
multiplication. This follows because because if $v,w\in V_\phi$ then
\begin{align}
\label{isotropic}
0=v\cdot(w\cdot\phi)-w\cdot(v\cdot\phi)=-2\left<v,w\right>\phi.
\end{align}
Using the underlying scalar product, we can identify
$\cc^{2n}$ with its dual and regard Clifford multiplication as
an injection
\begin{align*}
m:\Delta_+\to\cc^{2n}\otimes\Delta_-.
\end{align*}
Choose a basis $(\delta_\ell)$ of $\Delta_-$, and set
\begin{align}
\label{mphi}
m(\phi)=\sum_{\ell=1}^{2^{n-1}}\alpha_\ell\otimes\delta_\ell.
\end{align} 
Then the $\alpha_\ell$ span the annihilator $(V_\phi)^\circ$ of
$V_\phi$; the bigger the latter, the smaller its annihilator. In the
extreme case, $V_\phi$ is \emph{maximal} isotropic if and only if
$(V_\phi)^\circ$ is \emph{isotropic} (of the same dimension). A nice
way to chararacterize when this occurs is through representation
theory.

It is well-known that an element of the Clifford algebra
$\mathrm{Cl}(\cc^{2n})$ is itself an endomorphism of $\Delta$. Furthermore,
Cartan established equivariant decompositions
\begin{align}
\label{dec3}
\begin{split}
  \Delta_+ \otimes \Delta_+ &= \Lambda^n_+\>\oplus\,\Lambda^{n-2} \oplus
  \cdots\\[3pt]
 \Delta_+ \otimes \Delta_- &= \Lambda^{n-1} \oplus \Lambda^{n-3} \oplus \cdots,
\end{split}
\end{align}
where $\Lambda^k= \Ext^k(\cc^{2n})$ denotes exterior power of the
basic representation $\Lambda^1=\cc^{2n}$ of
$\SO(2n)=\Spin(2n)/\mathbb{Z}_2$, and $\Lambda^n_+$ is the $+1$
eigenspace of the Hodge map $*\colon\Lambda^n\to\Lambda^n$.

\begin{theorem}[Cartan \cite{Cartan}]
\label{Cartant1}
The isotropic subspace $V_\phi$ is maximal if and only if the only
non-zero component of $\phi\otimes\phi$ is in $\Lambda^n_+$.  This
component is decomposable (that is, a simple $n$-form), and generates
the subspace corresponding to $\phi$.
\end{theorem}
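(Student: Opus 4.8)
The plan is to reduce everything to an explicit model of the spinor module and then run a highest--weight argument. Fix a maximal isotropic subspace $L\subset\cc^{2n}$ and a complementary maximal isotropic $L'$, dual to $L$ under the bilinear form, and realize $\Delta\cong\Lambda^\bullet L'$ with the standard Clifford module structure, in which $L'$ acts by exterior multiplication and $L$ by contraction via the pairing $L\times L'\to\cc$; choose $\Delta_+$ to be the half--spinor summand whose pure spinors induce the fixed orientation, so that $1\in\Lambda^0L'$ lies in $\Delta_+$. Since $1$ is annihilated by all of $L$, we have $V_1\supseteq L$, and $V_1=L$ because $V_1$ is isotropic. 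With a Cartan subalgebra chosen so that $L=\Span\{u_1,\dots,u_n\}$, $L'=\Span\{\xi_1,\dots,\xi_n\}$, $u_i$ of weight $e_i$ and $\xi_i$ of weight $-e_i$, the vector $1$ has weight $\lambda=\tfrac12(e_1+\cdots+e_n)$, the highest weight of $\Delta_+$; correspondingly $2\lambda=e_1+\cdots+e_n$ is the highest weight of $\Lambda^n_+$, whose $2\lambda$--weight space inside $\Lambda^n\cc^{2n}$ is the line spanned by the decomposable form $u_1\wedge\cdots\wedge u_n$ (the only basis monomial of weight $\sum e_i$).

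Next I would record how the components of $\phi\otimes\phi$ are extracted. The summands $\Lambda^n_+,\Lambda^{n-2},\Lambda^{n-4},\dots$ in (\ref{dec3}) are pairwise inequivalent irreducible $\SO(2n)$--modules, so by Schur's lemma the projection $\pi_k\colon\Delta_+\otimes\Delta_+\to\Lambda^k$ onto any one of them is the unique equivariant map up to a scalar $c_k$, and it is realized by Clifford contraction: $\langle\pi_k(\phi\otimes\psi),\eta\rangle=c_k\langle\eta\cdot\phi,\psi\rangle$ for $\eta\in\Lambda^k$ viewed inside $\mathrm{Cl}(\cc^{2n})$ by antisymmetrization, with $\langle\,,\,\rangle$ the invariant pairing on $\Delta$; one checks $c_k\neq0$ by evaluating at $\phi=\psi=1$. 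A useful remark that drops out: if $v_1,\dots,v_k\in V_\phi$ then they are pairwise orthogonal (as $V_\phi$ is isotropic), so $v_1\cdots v_k=v_1\wedge\cdots\wedge v_k$ in the Clifford algebra and $v_1\cdots v_k\cdot\phi=\pm\,v_2\cdots v_k\cdot(v_1\cdot\phi)=0$; hence $\pi_k(\phi\otimes\phi)\perp\Lambda^kV_\phi$ for all $k$.

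For the ``only if'' direction and the last sentence, assume $V_\phi$ is maximal isotropic. Because $\SO(2n)$ acts transitively on the maximal isotropics inducing the fixed orientation --- this is the identification $\Z n=\SO(2n)/\U(n)$ recalled in the background --- a lift to $\Spin(2n)$ carries $V_\phi$ to $L$, whence the transformed spinor is annihilated by $L$ and thus lies in $\Lambda^0L'=\cc\cdot1$. By equivariance of the decomposition (\ref{dec3}) we may therefore assume $\phi=1$. Then $\phi\otimes\phi$ is a weight vector of $\Delta_+\otimes\Delta_+$ of weight $2\lambda=e_1+\cdots+e_n$, so each $\pi_k(\phi\otimes\phi)$ is a $2\lambda$--weight vector of $\Lambda^k$. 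The modules $\Lambda^{n-2},\Lambda^{n-4},\dots$ consist of forms of degree $<n$, whose weights are signed sums of fewer than $n$ of the $e_i$ and hence never equal $e_1+\cdots+e_n$; therefore $\pi_k(\phi\otimes\phi)=0$ for $k<n$ and $\phi\otimes\phi\in\Lambda^n_+$. Being a nonzero $2\lambda$--weight vector of $\Lambda^n_+$, it spans the highest weight line $\cc\cdot(u_1\wedge\cdots\wedge u_n)$ identified above; this $n$--form is decomposable and generates $L=V_\phi$. Transporting back by the $\Spin(2n)$--action yields the statement for the original $\phi$.

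The converse is the part I expect to be the real obstacle. Suppose the only nonzero component of $\phi\otimes\phi$ lies in $\Lambda^n_+$. Vanishing of all the other components is a system of quadratic equations in $\phi$ --- the classical pure--spinor relations --- and what has to be shown is that these equations \emph{cut out}, and not merely vanish on, the set of pure spinors. This is exactly Kostant's theorem that the closed $\Spin(2n)$--orbit in $\PP(\Delta_+)$ is defined scheme--theoretically by the quadrics $\psi\otimes\psi\in\Lambda^n_+$; combined with the ``only if'' direction and the transitivity above, which identify that closed orbit with the set of lines of pure spinors, it follows that $\phi$ is pure and so $V_\phi$ is maximal. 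To avoid quoting Kostant's theorem one can argue by dimensions instead: the subvariety $\{[\phi]\in\PP(\Delta_+):\phi\otimes\phi\in\Lambda^n_+\}$ contains the irreducible pure--spinor variety $\Z n$, of dimension $\binom n2$, and a tangent--space computation at $[1]$ shows its dimension is at most $\binom n2$, forcing equality. Either way this is the delicate point; the ``only if'' direction and the decomposability of the $\Lambda^n_+$--component are routine once one works in the model above.
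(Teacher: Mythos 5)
The paper itself offers no proof of this statement: it is quoted as a classical result of Cartan with a citation (see also the pointer to \cite{Cartan,Chevalley} at the end of Section~\ref{linears}), so there is no internal argument to compare yours against, and your proposal should be judged on its own. Your ``only if'' direction is correct and complete in outline: reducing to the highest weight vector $1$ in the model $\Delta\cong\Lambda^\bullet L'$ via transitivity of $\Spin(2n)$ on oriented maximal isotropics, and then observing that $\Lambda^k\cc^{2n}$ with $k<n$ has no weight equal to $e_1+\cdots+e_n$, does kill all lower components and identifies the $\Lambda^n_+$-component with a multiple of $u_1\wedge\cdots\wedge u_n$, which is decomposable and spans $V_1=L$; nonvanishing of that component is automatic since $\phi\otimes\phi\neq0$ and the decomposition \eqref{dec3} is multiplicity-free. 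For the converse, your primary route --- quoting Kostant's theorem that the quadratic equations $\phi\otimes\phi\in V_{2\lambda}$ cut out the cone over the closed orbit, combined with the identification of that orbit with the pure spinors coming from your forward direction --- is logically sound and non-circular, though it imports a theorem considerably heavier than what Cartan and Chevalley use (the classical proofs argue directly in the exterior-algebra model).

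Two points deserve flagging. First, your fallback ``dimension count'' for the converse is not complete as stated: a tangent-space bound at $[1]$ controls only the local dimension of the quadratic locus $S=\{[\phi]:\phi\otimes\phi\in\Lambda^n_+\}$ along the component through $[1]$; it does not exclude other irreducible components of $S$ disjoint from the pure-spinor variety, so equality $S=\Z n$ does not follow without an irreducibility argument or a pointwise analysis at an arbitrary $[\phi]\in S$. Since this is exactly the delicate implication (purity quadrics force $\dim V_\phi=n$), the alternative route has a genuine gap, and you should either keep the appeal to Kostant or run the classical Chevalley-style argument in the model. Second, a minor slip: you propose to verify $c_k\neq0$ in the Clifford-contraction realization of $\pi_k$ ``by evaluating at $\phi=\psi=1$'', but for $k<n$ that evaluation gives identically zero (as your own forward direction shows), so it proves nothing; one must test the contraction pairing on a pair of pure spinors in general position, or on non-pure spinors. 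This does not damage your main argument, since neither direction actually uses $c_k\neq0$ for $k<n$, but the parenthetical as written is incorrect.
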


If the condition of the theorem is satisfied, then $\phi$ is called a
\emph{pure} spinor.  Sometimes we shall use the symbol $\phi$ to
indicate the projective class of a pure spinor, and we let $J_\phi$
denote the OCS (or skew-symmetric orthogonal matrix with positive
Pfaffian) characterized by
\begin{align*}
T^{1,0}=V_\phi,\quad \Lambda^{0,1}=(V_\phi)^\circ.
\end{align*}
There are no purity conditions for $n =2,\,3$ because in both cases
$\Lambda^n_+$ equals the symmetric part of the tensor product; hence
the coset spaces in (ii) are complex projective spaces:
\begin{align*}
\Z2= \cp^1,\qquad\Z3= \cp^3.
\end{align*}
However, in dimension $8$, there is one
quadratic relation given by projection to the summand $\Lambda^0$, and
\begin{align}
\label{so8}
\Z4 \subset \cp(\Delta_+)
\end{align}
is a non-degenerate quadric $\Q^6\subset \cp^7$.

\subsection{Twistor fibrations}
\label{tfibsec}
We shall first discuss the twistor space $\sZ=\sZ(\rr^{2n})$ of
Euclidean space $\rr^{2n}$. As a smooth manifold, it is the product
\begin{align}
\label{sZ}
\sZ =\Z n \times\rr^{2n}.
\end{align}
The ``twistor'' complex structure $\sJ$ on $\sZ$ is defined as
follows. The tangent space to $\sZ$ at a point $p=(J,\mathbf{x})$
splits as
\begin{align}
\label{VH}
T_p\sZ=V_p \oplus H_p,
\end{align}
where the vertical space $V_p$ is tangent to the $\Z n$ factor at $J$,
and the horizontal space $H_p$ is tangent to the $\rr^{2n}$ factor at
$\mathbf x$. As the notation suggests, $J\in\Z n$ is itself an almost
complex structure on the vector space $\rr^{2n}\cong H_p$. 

The tangent space to $\Z n$ at $J$ can be identified with those
skew-symmetric endomorphisms of $\rr^{2n}$ that \emph{anti-commute}
with $J$. It follows that, if $H_p^{1,0}$ denotes the $+i$-eigenspace
for $J$, there is a canonical identification
\begin{align}
\label{L20}
V_p\otimes_\rr\cc=\Ext^2(H_p^{1,0})\oplus\Ext^2(H_p^{0,1}).
\end{align}
This not only determines the standard complex structure of $\Z n$ but
also allows us to fix its sign in the context of the twistor
fibration $\sZ\to\rr^{2n}$. We define $V_p^{1,0}=\Ext^2(H_p^{1,0})$, and decree
the $+i$-eigenspace of $\sJ$ to be
\begin{align}
\label{decree}
V_p^{1,0}\oplus H_p^{1,0}\subset (T_p\sZ)\otimes_\rr\cc.
\end{align}
It is known that, with this careful choice, $\sJ$ is integrable
\cite{AHS,Besse,deBartolomeisNannicini,ObrianRawnsley,Salamon1985}.

An analogous construction can be used to define the twistor space of
any Riemannian manifold. The fiber over each point is again $\Z n$.
The splitting (\ref{VH}) is accomplished by means of the Levi-Civita
connection, and this enables one to define a tautological almost
complex structure $\sJ$ in the same way. In
particular, the twistor space of the even-dimensional round sphere is the
total space $\sZ(S^{2n})$ of the fibration
\begin{align}
\label{twsph} 
\Z n\rightarrow\sZ(S^{2n})\rightarrow S^{2n},
\end{align}
endowed with the structure $\sJ$. The orthonormal frame bundle of
$S^{2n}$ is the Lie group $\SO(2n + 1)$, so (\ref{twsph}) is the
fibration
\begin{align*}
\SO(2n) / \U(n)  \rightarrow \SO(2n + 1) / \U(n) \rightarrow S^{2n}.
\end{align*}
This was used in the study of minimal surfaces in $S^{2n}$
\cite{Calabi, Barbosa}. On the other hand, it is known that $\SO(2n +
1) / \U(n) \cong \SO( 2n + 2) / \U(n+1)$ 
(see \cite{BattagliaS6, SalamonOCS}), and
so there is a fibration
\begin{align}
\label{twist1}
\Z n \rightarrow\Z{n+1}\stackrel\pi\rightarrow S^{2n}.
\end{align} 
We shall give another description of this in Section~\ref{integ}.

Over the $4$-sphere, one recovers the ``Penrose fibration'' 
\begin{align} 
\label{Penrose}
\cp^1\rightarrow \cp^3 \rightarrow S^4.
\end{align}  
If we identify $S^4$ with the quaternionic projective line
$\mathbb{HP}^1$, this is merely a Hopf-type map. In dimension $6$, we
have
\begin{align} 
\label{tproj6}
\cp^3 \rightarrow \Q^6\rightarrow S^6,
\end{align}
as stated in the Introduction, although we now have the more precise
description $\Q^6\subset\cp(\Delta_+)$. Pending an explicit formula
for the projection to $\rr^6\subset S^6$ in Section
\ref{twistorprojection}, we shall study the geometry underlying
(\ref{tproj6}) in the next subsection.

\subsection{Linear spaces on $6$-quadrics}
\label{linears}
In this subsection, we abbreviate to $\Lambda$ the standard complex
representation $\Lambda^1=\cc^8$ of
$\SO(8)=\Spin(8)/\mathbb{Z}_2$. Just as $\Q^6\subset\cp(\Delta_+)$
parametrizes maximal positively-oriented isotropic subspaces of
$\Lambda$, so the $6$-quadric in $\cp(\Delta_-)$ parametrizes maximal
negatively-oriented isotropic suspaces in $\Lambda$. The triality
principal asserts that the representations
$\Delta_+,\,\Delta_-,\,\Lambda$ are equivalent by a cyclic permutation
induced by an outer automorphism of $\Spin(8)$, and we deduce that the
$6$-quadrics in $\PP(\Delta_-),\,\PP(\Lambda)$ parametrize different
families of maximal isotropic subspaces of $\Delta_+$ or,
equivalently, linear $\cp^3$-s in the twistor space (\ref{so8}).

In this way, we see the classical fact that the set of $\cp^3$-s in
the twistor space has two components, each of which can itself be
identified with a $6$-quadric. In the twistor context, this theory was
described by Slupinski \cite{Slupinski}.  The family parametrized by
the quadric in $\cp(\Lambda)$ contains the twistor fibers (themselves
parametrized by the real submanifold $S^6\subset\cp(\Lambda)$) but
consists of what generally we shall call ``vertical'' $\cp^3$-s. A
vertical $\cp^3$ is either a fiber or a twistor space of a $4$-sphere
conformally embedded in $S^6$ (via (\ref{Penrose}) or a
negatively-oriented version) \cite{Slupinski}.  On the other hand, the
family parametrized by the quadric in $\cp(\Delta_-)$ consists of
``horizontal'' $\cp^3$-s. If $P$ is a horizontal $\cp^3$ then there
exists a unique $p\in S^6$ such that $P\cap\pi^{-1}(p)$ is a $\cp^2$
and $P\cap\pi^{-1}(q)$ a single point for $q\ne p$. Moreover, $P$ is
determined by $p$ and $P\cap\pi^{-1}(p)$, so the family of horizontal
$\cp^3$-s is a ``dual'' twistor space projecting to $S^6$ with fiber
$(\cp^3)^*$ (we have after all just swapped $\Delta_+$ and
$\Delta_-$).

We continue to denote the twistor space of $S^6$ by $\Q^6$, leaving
implicit its embedding in $\cp(\Delta_+)$. The homology group
\begin{align}
\label{homology}
H_6(\Q^6,\mathbb{Z}) = \mathbb{Z} \oplus \mathbb{Z}
\end{align}
is generated by a horizontal $\cp^3$ (we choose this to represent the
first factor), and a vertical $\cp^3$ (the second factor). This
implies that any $3$-dimensional subvariety $\X \subset \Q^6$ has a
well-defined \emph{bidegree} $(q,p)$. Any vertical or horizontal
$\cp^3$ has zero self-intersection in $\Q^6$, and $\cp^3$-s from
opposite families have intersection $1$.

To illustrate this from Cartan's viewpoint, we use the isomorphisms
\begin{align}
\label{dec33}
\Delta_- \otimes \Delta_- 
&=\ \Lambda^4_-\oplus\Lambda^2\oplus\Lambda^0,\\
\Delta_-\otimes \Delta_+
&=\ \Lambda^3\oplus\Lambda^1.
\label{dec44}
\end{align}
They are instances of (\ref{dec3}) though here (having applied
triality) $\Lambda^k=\Ext^k\Delta_+$.  Two horizontal $\cp^3$-s are
determined by pure spinors $\phi,\,\psi\in \Delta_-$. The non-zero
component of $\phi \otimes \psi$ in the smallest summand of
(\ref{dec33}) will always be a simple form that spans the intersection
of the corresponding $4$-dimensional subspaces in $\Delta_+$. In the
generic case, the component $\langle\phi,\psi\rangle\in\Lambda^0$ will
be non-zero, and the two $\cp^3$-s will have empty intersection.  If
$\langle\phi,\psi\rangle=0$ then the component in $\Lambda^2$
(formally $\phi\wedge\psi$) will be a simple wedge product of 1-forms,
indicating that the two $\cp^3$-s intersect in a $\cp^1$.  Similarly,
two $\cp^3$-s from different families will intersect generically in a
point or (if the $\Lambda^1$ component of $\phi\otimes\psi$ in
(\ref{dec44}) vanishes) a $\cp^2$. A similar intersection criterion
holds in higher dimensions, but we will not require it. We refer the
reader to \cite{Cartan,Chevalley} for further details.\smallbreak

The following lemma will be used in Section~\ref{finalsec}. It
describes the geometry of the twistor projection restricted to a plane
$\cp^2$ in $\Q^6$.
\begin{lemma} 
\label{cp2}
Every linear $\cp^2 \subset \Q^6$ is either contained entirely in a fiber of the
twistor projection, or intersects exactly one twistor fiber in a
$\cp^1$ and all other fibers in a point or the empty set.
\end{lemma}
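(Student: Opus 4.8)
The plan is to exploit the explicit description of horizontal $\cp^3$-s recalled above, for which the twistor projection is completely transparent. A linear $\cp^2\subset\Q^6$ is the projectivization of a $3$-dimensional totally isotropic subspace $U\subset\Delta_+$ for the quadratic form defining $\Q^6\subset\cp(\Delta_+)$. Since that form is nondegenerate, $\dim U^\perp=5$ and $U\subset U^\perp$, so it induces a nondegenerate quadratic form on the $2$-dimensional quotient $U^\perp/U$; over $\cc$ such a form has exactly two isotropic lines, and pulling these back shows that $U$ lies in exactly two maximal (that is, $4$-dimensional) isotropic subspaces $W_1,W_2$, with $W_1\cap W_2=U$. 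As $\dim(W_1\cap W_2)=3$ is odd, $W_1$ and $W_2$ belong to opposite families; hence exactly one of the two $\cp^3$-s containing $\cp^2$ is horizontal. I would call it $P$, let $p\in S^6$ be the point associated with $P$ as above, and write $\cp^2_P=P\cap\pi^{-1}(p)$ for the corresponding plane, recalling that $P\cap\pi^{-1}(q)$ is a single point for every $q\ne p$.

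Next I would split into two cases. If $\cp^2=\cp^2_P$, then $\cp^2\subset\pi^{-1}(p)$ and we are in the first alternative of the lemma. If $\cp^2\ne\cp^2_P$, then $\cp^2$ and $\cp^2_P$ are two distinct hyperplanes of $P\cong\cp^3$ and therefore meet in a line; since $\cp^2\subset P$ gives $\cp^2\cap\pi^{-1}(p)=\cp^2\cap(P\cap\pi^{-1}(p))=\cp^2\cap\cp^2_P$, the plane $\cp^2$ meets the fiber over $p$ exactly in a $\cp^1$. For $q\ne p$ one has $\cp^2\cap\pi^{-1}(q)=\cp^2\cap(P\cap\pi^{-1}(q))$, which is contained in a single point and hence is a point or empty; in particular $\pi^{-1}(p)$ is the only twistor fiber meeting $\cp^2$ in a $\cp^1$. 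This is the second alternative, and the two cases are exhaustive.

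I expect the only delicate point to be the opening step — namely, knowing that of the two $\cp^3$-s containing a given plane exactly one is horizontal. This is forced by the parity of $\dim(W_1\cap W_2)$, which distinguishes the two families of maximal isotropic subspaces of $\Delta_+$ (equivalently, by the intersection numbers of vertical and horizontal $\cp^3$-s recalled above), together with the identification of those families with the vertical and horizontal $\cp^3$-s. Everything afterwards is elementary incidence geometry in $\cp^3$ combined with the structural facts about the twistor projection restricted to a horizontal $\cp^3$; the one bookkeeping point is to confirm in the second case that $\cp^2\cap\pi^{-1}(p)$ is genuinely a $\cp^1$ and not all of $\cp^2$, which holds precisely because $\cp^2\ne\cp^2_P$.
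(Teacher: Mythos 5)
Your proof is correct and follows essentially the same route as the paper: pass to the horizontal $\cp^3$ containing the given plane, use that it meets exactly one twistor fiber in a plane $P_0$ and every other fiber in a single point, and then compare the two planes inside that $\cp^3$ (equal, or meeting in a $\cp^1$). The only difference is that the paper simply cites \cite[Proposition 3.2]{BV} for the fact that a plane in $\Q^6$ lies in exactly one horizontal and one vertical $\cp^3$, whereas you rederive it from the isotropic linear algebra (the two maximal isotropic subspaces through $U$, distinguished by the parity of $\dim(W_1\cap W_2)$), which is a valid substitution.
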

\begin{proof}
Given any $\cp^2$, call it $P$, there is exactly one horizontal and
one vertical $\cp^3$ containing $P$ \cite[Proposition 3.2]{BV}. 
Consider the horizontal $\cp^3$ containing $P$.  As mentioned above, 
a horizontal $\cp^3$ in $\Q^6$ hits exactly one fiber in a $\cp^2 = P_0$, 
and hits every other fiber in a point. The planes $P$ and $P_0$ are then two
$\cp^2$-s in a $\cp^3$; they are either equal or intersect in a
$\cp^1$.
\end{proof}

\begin{remark}{\em
  If we look instead at the vertical $\cp^3$ containing $P$, it is
  either a twistor fiber or (from above) can identified with the
  twistor space of an $S^4 \subset S^6$. Any $\cp^2$ in this twistor
  bundle hits exactly one fiber in a $\cp^1$ and hits every other
  fiber over this $S^4$ in exactly one point \cite[Proposition
    3.3]{SalamonViaclovsky}.}
\end{remark}

\section{Coordinates on the twistor fiber}
\label{holocor}
We shall return to consider the twistor space $\sZ(\mathbb{R}^{2n})$
in Section~\ref{integ}. But we first describe an atlas of coordinates
covering the space $\Z n = \SO(2n)/\U(n)$ that constitutes the twistor
fiber over $\rr^{2n}$ or $S^{2n}$. We assume that $n\ge2$.

We will define quantities
\begin{align*}
\xi_{i_1 \dots i_p}\quad &\hbox{for $p=0,2,\ldots\ $ even, } 
\hbox{up to $n$ or (if $n$ is odd) $n\!-\!1$,}\\[3pt]
\eta_{i_1\dots i_q}\quad &\hbox{for $q=1,3,\ldots\ $ odd, }
\hbox{up to $n\!-\!1$ or (respectively) $n$,}
\end{align*} 
both skew-symmetric in all indices running from $1$ to $n$. The
$\xi$-s will be holomorphic coordinates on $\Z n$. At each point of
$\Z n$ the $\eta$-s will be elements of
\begin{align}
\label{C2n}
\cc^{2n}=\rr^{2n} \otimes\cc = \Lambda^{1,0} \oplus \Lambda^{0,1},
\end{align}
decomposed relative to the standard complex structure $\bJ$ on
$\rr^{2n}$ for which $\Lambda^{1,0}$ is spanned by $dz^1,\ldots,dz^n$.

Having fixed $\bJ$, there are isomorphisms
\begin{align}
\label{Delta+ext}
\Delta_+\> &\cong\>\Lambda^{0,0}\oplus\Lambda^{2,0}\oplus\Lambda^{4,0}
\oplus\cdots\\[3pt]
\Delta_-\> &\cong\>\Lambda^{1,0}\oplus\Lambda^{3,0}\oplus\Lambda^{5,0}
\oplus\cdots
\label{Delta-ext}
\end{align}
(Strictly speaking, we also need a trivialization of $\Lambda^{n,0}$
that removes the distinction between $\Lambda^{p,0}$ and
$\Lambda^{0,n-p}$.) Rather than adopt an overtly invariant approach,
we shall merely use these decompositions to motivate Cartan's
technique.

First, the $\xi_{i_1 \dots i_p}$ represent the components of a spinor
$\bxi\in \Delta_+$ relative to a basis compatible with
(\ref{Delta+ext}).  We arrange them into groups
\begin{align}
\label{groups}
\xi_0,\quad \xi_{ i_1 i_2}\ (i_1 < i_2),\quad
\xi_{i_1 i_2 i_3 i_4}\ (i_1 < i_2 < i_3 < i_4),\quad\cdots
\end{align}
and order them lexicographically within each group, to give a total of
\begin{align*}
N=\sum_{p\ \mathrm{even}} \binom np = 2^{n-1}
\end{align*}
scalars. For example, $\xi_0$ (here $p=0$ so logically the subscript
is $\emptyset$) represents the component of $\bxi$ in the trivial
summand $\Lambda^{0,0}$.

Next, we use (\ref{mphi}) and a compatible basis $(\delta_\ell)$ of
(\ref{Delta-ext}) to convert $\bxi=\phi$ into 1-forms $\alpha_\ell$
for $\ell=1,\ldots,N$. The $\eta$-s are precisely these $1$-forms, but
rearranged to respect (\ref{Delta-ext}). The summand $\Lambda^{1,0}$
gives us the first $n$ of them, namely
\begin{align}
\label{etai}
\eta_i &= \xi_0 dz^i - 
\suml_{k=1}^n\xi_{ik} d\bz^k,\qquad i=1,\ldots,n.
\end{align} 
This formula reflects the fact that Clifford multiplication by a
vector $\pd/\pd z^i$ or $\pd/\pd\bz^j$ acts on (\ref{Delta+ext}) as
the sum of an exterior and interior product respectively. (To make
sense of this, it is easiest to regard the summands of $\Delta_\pm$ as
exterior powers of \emph{vectors}.) The skew-symmetry guarantees that
the $\eta_i$ span an \emph{isotropic} subspace in (\ref{C2n}). If
$\xi_0\ne0$, this subspace will be maximal and thereby define $J\in\Z n$.

In general, the matrix $(\xi_{ij})$ defines an element of the
\emph{tangent space} to $\Z n$ at $J$, a $\U(n)$-module identified
with $\Lambda^{2,0}$; cf.~(\ref{L20}). The point of $\Z n$ determined
by (\ref{etai}) with $\xi_0=1$ is parametrized by an affine space, and
it is evident that it cannot cover all of $\Z n$.  If $\xi_0 = 0$ and
$n\ge3$, the $\eta_i$ are not even linearly independent, and we need
to add more forms.  We are therefore forced to consider $1$-forms
arising from further summands in (\ref{Delta-ext}). Applying interior
and exterior products, we obtain the additional elements
\begin{align}
\label{etaiq}
\eta_{i_1 \dots i_q} = 
\suml_{k=1}^q (-1)^{k-1} \xi_{i_1 \dots \hat{i_k} \dots i_q}
dz^{i_k} - \suml_{m=1}^{n} \xi_{i_1 \dots i_q m} d\bz^m,
\ \quad q\ge3\hbox{ odd},
\end{align}
where the notation $\hat{i_k}$ means to omit this index.

In order that $\bxi$ be a pure spinor, the $\eta$-s (recall these are
the $\alpha_\ell$-s in (\ref{mphi})) must span an \emph{isotropic}
subspace.  In particular, the forms (\ref{etai}) and (\ref{etaiq})
must be mutually isotropic. This provides us with the scheme of
equations
\begin{align}
\label{quadrics}
\xi_{0}\xi_{i_1 \dots i_p}\ =\ \suml_{k=1}^{p-1}
(-1)^{k-1}\xi_{i_k i_p} \xi_{i_1 \dots \hat{i_k} \dots i_{p-1}},
\qquad p\ge4\hbox{ even}.
\end{align}
(The last equation is a tautology if $p=2$, which helps to check the
signs.) If $\xi_0=1$, we have
\begin{align*}
\eta_{i_1 \dots i_q} = \suml_{k=1}^q (-1)^{k-1} \xi_{i_1 \dots
\hat{i_k} \dots i_q}\eta_{i_k},
\end{align*}
and in this case isotropy is manifest.

\begin{remark}\label{exp}
{\em
If $\xi_0\ne0$, then the remaining components of $\bxi$ in
(\ref{Delta-ext}) are determined by its projection $\beta$ to
$\Lambda^{2,0}$. Indeed, it follows from (\ref{quadrics}) that $\bxi$
can be identified with
\begin{align*}
\textstyle
\xi_0\exp\beta=\xi_0\big(1+\beta+\frac12\beta\wedge\beta+\cdots
\big).
\end{align*}
(This fact is well-known in the context of generalized complex
structures; see for example \cite{Gualtieri}.) More generally, $\bxi$
will have the form $\gamma\wedge\exp\beta$ for some
$\gamma\in\Lambda^{2k}$.}
\end{remark}

Let $\Y_n\subset\cp^{N-1}$ be the intersection of quadrics defined by
the equations (\ref{quadrics}) with $p\ge4$. One can show that these
\begin{align*} 
  \tilde N\>=\sum_{p\>\mathrm{even} > 2} \binom np= N - \binom n2 -1
\end{align*}
equations are independent. We now define
\begin{align}
\label{FF}
\FF:\Y_n \cap \{ \xi_0 \ne0 \} \rightarrow\Z n,
\end{align} 
by mapping $[\bxi]$ to the maximal isotropic subspace
$(V_\sbxi)^\circ$ of $\cc^{2n}$.  More explicitly,
\begin{align*}
\begin{array}{rcll}
[\xi_0,\xi_{12},\ldots,\xi_{1\cdots n}] &\mapsto&
\Span\{\eta_1,\ldots,\eta_{123},\ldots,\eta_{2\cdots n}\}
\quad & \hbox{if $n$ is even},\\[4pt]
[\xi_0,\xi_{12},\ldots,\xi_{2\cdots n}] &\mapsto&
\Span\{\eta_1,\ldots,\eta_{123},\ldots,\eta_{1\cdots n}\}
& \hbox{if $n$ is odd}.
\end{array}
\end{align*}
There are $\tilde N$ equations in $\cp^{N-1}$ defining $\Y_n$, so $\Y_n$ is
a real $n(n-1)$-dimensional manifold. On the other hand, the real
dimension of $\Z n$ equals
\begin{align*}
  \dim \SO(2n) - \dim \U(n) = n (2n-1) - n^2 = n(n-1).
\end{align*}
This implies that the image of (\ref{FF}) is an open subset of
$\Z n$.  

We can remove the restriction $\xi_0\ne0$ as follows. In the general
case, $\Y_n$ will have several irreducible components. We need to add
all of the quadratic relations from Theorem~\ref{Cartant1}; these
additional relations will specify a unique irreducible component of
$\Y_n$ that we will call $\Y_n^+$. This is analogous to the
well-known Pl\"ucker relations for the orthogonal Grassmannians. In
general, the map
\begin{align}
\label{psi+}
\FF: \Y_n^+\rightarrow\Z n,\quad [\bxi]\mapsto(V_\sbxi)^\circ
\end{align} 
is well-defined, which allows the possibility that $\xi_0 = 0$. 
\begin{theorem}
  The map {\rm(\ref{psi+})} is a biholomorphism, where $\Z n$ has the
  complex structure as a Hermitian symmetric space, and $\Y_n^+$ has
  the induced complex structure as a complex submanifold of
  $\cp^{N-1}$.
\end{theorem}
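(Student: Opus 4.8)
The plan is to prove that $\FF\colon\Y_n^+\to\Z n$ in \eqref{psi+} is a biholomorphism by showing it is a holomorphic bijection between compact complex manifolds of the same dimension, hence a biholomorphism. First I would establish that $\Y_n^+$ is in fact a smooth connected complex submanifold of $\cp^{N-1}$ of complex dimension $\binom n2$: the relations \eqref{quadrics} together with the extra Cartan relations of Theorem~\ref{Cartant1} cut out the ``pure spinor variety,'' and the content of Cartan's theorem is precisely that this variety is exactly the image of $\Z n$ under the spinor embedding, so it is a homogeneous space under $\Spin(2n)$ and therefore smooth, connected, and of the right dimension. In fact the cleanest route avoids redoing Cartan entirely: $\Spin(2n)$ acts on $\cp(\Delta_+)$, the orbit of the highest weight line is the unique closed orbit, it is the image of the equivariant map $\Z n=\Spin(2n)/\U(n)\hookrightarrow\cp(\Delta_+)$ sending $J_\phi$ to $[\phi]$, and one identifies this orbit with $\Y_n^+$ by matching defining equations. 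Then $\Y_n^+$ is a smooth projective variety biholomorphic to $\Z n$ via $[\phi]\mapsto J_\phi$.

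Next I would identify the map $\FF$ with the inverse of this spinor embedding. By construction \eqref{etai}, \eqref{etaiq} assign to $[\bxi]$ the span of the $\eta$-forms, which are exactly the components of $m(\phi)\in\cc^{2n}\otimes\Delta_-$ in the sense of \eqref{mphi}; by Cartan's Theorem~\ref{Cartant1} (and the discussion following \eqref{mphi}), when $[\bxi]\in\Y_n^+$ the spinor $\bxi$ is pure, so these $\alpha_\ell=\eta$'s span the annihilator $(V_\sbxi)^\circ$, which is maximal isotropic and hence defines $J_\phi\in\Z n$ with $\Lambda^{0,1}=(V_\sbxi)^\circ$. Thus $\FF([\bxi])=J_\phi$, and $\FF$ is literally the inverse of the embedding $J\mapsto[\phi_J]$. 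In particular $\FF$ is a bijection, and it is holomorphic because the $\eta$'s are polynomial (hence holomorphic) in the homogeneous coordinates $\xi_{i_1\cdots i_p}$ and the assignment of a point of the Grassmannian $\Z n$ to a spanning set of $1$-forms is holomorphic; one checks this on the affine chart $\xi_0\neq0$ using \eqref{FF}, where $\FF$ is given by the explicit affine formula $\eta_i=dz^i-\sum_k\xi_{ik}d\bz^k$, manifestly holomorphic in the $\xi_{ik}$, and the same argument works on the other standard affine charts of $\cp^{N-1}$ intersected with $\Y_n^+$ using the decomposable form $\gamma\wedge\exp\beta$ description from Remark~\ref{exp}.

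Finally, a holomorphic bijection between complex manifolds of the same (finite) dimension is automatically a biholomorphism, since it is an open map and its inverse is continuous, and holomorphicity of the inverse follows either from the holomorphic inverse function theorem once one knows the differential is everywhere an isomorphism, or directly from the fact that a bijective morphism of smooth projective varieties in characteristic zero that is injective on tangent spaces is an isomorphism. To get injectivity of $d\FF$ it suffices to compare dimensions on the open chart $\xi_0\neq0$: there $\FF$ restricts to a map from an affine space of dimension $\binom n2$ onto the affine chart of $\Z n$ of the same dimension $\binom n2$, given by the linear-in-$\xi_{ik}$ formula above, which is clearly an embedding, and by $\Spin(2n)$-equivariance (both $\Y_n^+$ and $\Z n$ are homogeneous and $\FF$ intertwines the actions) the rank of $d\FF$ is constant, hence $d\FF$ is an isomorphism everywhere.

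The main obstacle is the identification of $\Y_n^+$ as a smooth variety of dimension $\binom n2$ and, relatedly, making precise which ``quadratic relations from Theorem~\ref{Cartant1}'' to add: the equations \eqref{quadrics} alone define a reducible scheme, and one must argue that imposing purity (equivalently, that $\phi\otimes\phi$ lies only in $\Lambda^n_+$) isolates a single smooth irreducible component and that this component is homogeneous. The efficient way around this is the orbit argument: since $\Spin(2n)$ acts on $\cp(\Delta_+)$ with a unique closed orbit, which equals the image of the equivariant holomorphic embedding of the compact homogeneous space $\Z n$, smoothness and connectedness come for free, and one only needs to check that this orbit satisfies \eqref{quadrics} plus the Cartan relations and is cut out by them — which is exactly Cartan's classical theorem cited as Theorem~\ref{Cartant1}. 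Once that is in hand, equivariance does the rest.
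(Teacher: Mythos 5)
Your argument is essentially correct, but it is genuinely different from what the paper does: the paper offers no proof at all, simply declaring the statement well-known and citing Inoue. Your route --- identifying $\Y_n^+$ with the closed $\Spin(2n)$-orbit of the highest weight line in $\cp(\Delta_+)$ (the pure spinor variety), recognizing $\FF$ as the inverse of the equivariant spinor embedding $J\mapsto[\phi_J]$, checking holomorphicity on the affine chart $\xi_0\ne0$ where $\FF$ is linear in the $\xi_{ik}$, and finishing by equivariance plus the fact that an injective holomorphic map between equidimensional complex manifolds is biholomorphic onto its image --- is the standard modern proof, and it correctly isolates the real content: that purity (the extra Cartan relations beyond \eqref{quadrics}) cuts out a single smooth homogeneous component, which the orbit argument gives for free. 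The one point where you lean on an unstated identification is the holomorphicity check: you must know that the chart of $\Z n$ parametrized by the skew-symmetric matrix $(\xi_{ij})$, i.e.\ by $\Lambda^{2,0}$ as in \eqref{L20}, is a holomorphic coordinate chart for the Hermitian symmetric complex structure (this is the Harish-Chandra/Borel embedding of the symmetric space), after which equivariance propagates the conclusion to the other charts as you say. What your approach buys is a self-contained, representation-theoretic proof usable without chasing the reference; what the paper's citation buys is brevity, since the theorem is indeed classical.
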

\begin{proof}
This is well-known, see \cite{Inoue}.
\end{proof}

We will henceforth use the map $f$ to identify $\Y_n^+$ and $\Z n$. 

\subsection{Low-dimensional cases.}
\label{lowdims}
This subsection summarizes the situation for $n$ equal in succession to
$2,\,3,\,4$, as this helps to clarify the above discussion. In each
case, we identify points of $\Z n$ with positively-oriented maximal
isotropic subspaces of $\cc^{2n}$.\smallbreak

For $n=2$, we have 
\begin{align}
\label{etaeta} 
\eta_1= \xi_0 dz^1 - \xi_{12} d\bar{z}^2,\qquad
\eta_2= \xi_0 dz^2 + \xi_{12} d\bar{z}^1.
\end{align}
The biholomorphism $\FF:\Y_2^+\to\Z2\cong\cp^1$ is given by
\begin{align*}
[\xi_0, \xi_{12}] \mapsto \Span\{ \eta_1, \eta_2 \},
\end{align*}
and there is no relation between $\xi_0,\xi_{12}$.\smallbreak

Now suppose that $n=3$. In addition to 
\begin{align}
\begin{split}
\label{etai6}
\eta_1 &= \xi_0dz^1-\xi_{12}d\bz^2-\xi_{13}d\bz^3,\\
\eta_2 &= \xi_0dz^2-\xi_{23}d\bz^3+\xi_{12}d\bz^1,\\
\eta_3 &= \xi_0dz^3+\xi_{13}d\bz^1+\xi_{23}d\bz^2,
\end{split}
\end{align}
defined by (\ref{etai}), we have 
\begin{align}
\label{etaiii}
\eta_{123} =  \xi_{23} dz^1 - \xi_{13} dz^2 + \xi_{12} dz^3,
\end{align}
since $\xi_{ijkl}=0$ here. The biholomorphism $\FF:\Y_3^+\rightarrow
\Z3\cong\cp^3$ is given by
\begin{align*}
[\xi_0, \xi_{12}, \xi_{13}, \xi_{23}] \mapsto \Span
\{ \eta_1, \eta_2, \eta_3, \eta_{123} \},
\end{align*}
and again there is no constraint.\smallbreak

Finally, suppose that $n=4$, so that (\ref{etaiii}) is upgraded to
\begin{align*} 
\eta_{ijk} = \xi_{jk} dz^{i} - \xi_{ik} dz^j + \xi_{ij} dz^k
+ \suml_m\xi_{ijkm} d\bz^m. 
\end{align*}
The biholomorphism $\FF:\Y_4^+\rightarrow\Z4$ is given by 
\begin{align*}
[\xi_0, \xi_{12}, \xi_{13}, \xi_{14}, \xi_{23}, \xi_{24}, \xi_{34}, \xi_{1234}]
\mapsto \Span \{ \eta_1, \eta_2, \eta_3,
\eta_4, \eta_{123}, \eta_{124}, \eta_{134}, \eta_{234} \},
\end{align*}
and there is now a single quadratic relation, 
\begin{align}
\label{1324} 
  \xi_0\xi_{1234} = \xi_{12}\xi_{34}-\xi_{13}\xi_{24}+\xi_{14}\xi_{23}
\end{align}
confirming that $\Z4$ is a nonsingular quadric hypersurface in
$\cp^7$. However, if $\xi_0=0$ then the $\xi_{ijkm}$ are independent of
the $\xi_{ij}$.

\subsection{Skew-symmetric orthogonal matrices}
\label{skewsym}
In this subsection, we make explicit the map taking a pure spinor
$\phi$ to a skew-symmetric matrix $J_\phi$ with Pfaffian $1$,
discussed at the start of Subsection~\ref{ocssec}.

For the most part, and for the sake of simplicity, we explain the
construction in dimension 6. For any
$[\xi_0,\xi_{12},\xi_{13},\xi_{23}]\in\cp^3=\Z3$, the associated
maximal isotropic space of $(1,0)$-forms is spanned by (\ref{etai6})
and (\ref{etaiii}). Its annihilator $T^{0,1}$ is spanned by
\begin{align}
\begin{split}
\label{vvv}
v_1 &= \xi_0\od_1 - \xi_{12}\pd_2 - \xi_{13}\pd_3\\ 
v_2 &= \xi_0\od_2 - \xi_{23}\pd_3 + \xi_{12}\pd_1\\
v_3 &= \xi_0\od_3 + \xi_{13}\pd_1 + \xi_{23}\pd_2
\end{split}
\end{align}
where $\pd_i=\pzi$ and $\od_j=\pzbj$, together with
\begin{align*}
v_{123} & = \xi_{23}\od_1-\xi_{13}\od_2 + \xi_{12}\od_3.
\end{align*}
If $\xi_0\ne0$, then $v_1, v_2, v_3$ suffice. On the other hand, if
(for example) $\xi_{12}\ne0$, $T^{0,1}$ is spanned by
\begin{align}
\begin{split}
\label{perm}
v_2 &=  \>\xi_{12}\pd_1 + \xi_0\od_2 - \xi_{23}\pd_3\\
v_1 &= \!-\xi_{12}\pd_2 - \xi_{13}\pd_3 + \xi_0\pd_1\\
v_{123}&=\>\xi_{12}\od_3 + \xi_{23}\od_1 - \xi_{13}\od_2 
\end{split}
\end{align}
We have arranged (\ref{perm}) so that the basis vectors coincide with
those in (\ref{vvv}) after swapping $\od_i\leftrightarrow\pd_i$ for
$i=1,2$. An inspection of the new coefficients yields
\begin{proposition}
  Let $J=J_\phi$ be the skew-symmetric orthogonal matrix obtained from
  a projective spinor $\phi=[\xi_0,\xi_{12},\xi_{13},\xi_{23}]$, and
  $J'=J_{\phi'}$ the matrix obtained from
  $\phi'=[\xi_{12},-\xi_0,\xi_{23},\xi_{13}]$. Then $J' = AJA^{-1}$,
  where $A\in \SO(2n)$ is the matrix corresponding to the
  transformation $z^1\mapsto\bz^1$ and $z^2\mapsto\bz^2$.
\end{proposition}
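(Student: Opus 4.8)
The assertion is essentially a bookkeeping statement: the two spinor representatives $\phi=[\xi_0,\xi_{12},\xi_{13},\xi_{23}]$ and $\phi'=[\xi_{12},-\xi_0,\xi_{23},\xi_{13}]$ describe the \emph{same} maximal isotropic subspace, but read off relative to a different complex structure on $\rr^6$ — the one obtained by conjugating $z^1,z^2$. The plan is to make this precise by comparing the two explicit spanning sets. First I would record that $v_1,v_2,v_3$ in (\ref{vvv}) and $v_1,v_2,v_{123}$ in (\ref{perm}) span the \emph{same} subspace $T^{0,1}\subset\cc^6$ whenever $\xi_{12}\ne0$ (one checks that $v_3$ and $v_{123}$ are linear combinations of the displayed vectors, using the relation $\xi_0 v_{123}=\xi_{23}v_1-\xi_{13}v_2+\xi_{12}v_3$ when $\xi_0\ne0$, and by continuity/density on all of $\{\xi_{12}\ne0\}$; alternatively one invokes the purity relations directly). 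So the geometry is unchanged.

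Next I would introduce the orthogonal transformation $A\in\SO(6)$ induced by $z^1\mapsto\bz^1$, $z^2\mapsto\bz^2$, $z^3\mapsto z^3$; concretely it swaps $\pd_1\leftrightarrow\od_1$ and $\pd_2\leftrightarrow\od_2$ while fixing $\pd_3,\od_3$. (One must check $\det A=1$, hence $A\in\SO(6)$ rather than merely $\mathrm{O}(6)$: the map acts as two commuting reflections $x^i\mapsto x^i$, $y^i\mapsto-y^i$ in the $z^1$- and $z^2$-planes, a product of two determinant-$(-1)$ maps, hence determinant $1$.) The key computational step is then to apply $A$ to the spanning set (\ref{perm}) and observe that, after the substitution $\od_i\mapsto\pd_i$ for $i=1,2$, the three vectors become
\begin{align*}
\xi_{12}\od_1+\xi_0\pd_2-\xi_{23}\pd_3,\qquad
-\xi_{12}\od_2-\xi_{13}\pd_3+\xi_0\od_1,\qquad
\xi_{12}\pd_3+\xi_{23}\pd_1-\xi_{13}\pd_2.
\end{align*}
Comparing coefficients with the general template (\ref{vvv})–(\ref{etaiii}) — i.e. reading these as $v_1',v_2',v_{123}'$ for a spinor $[\xi_0',\xi_{12}',\xi_{13}',\xi_{23}']$ — one reads off exactly $\xi_0'=\xi_{12}$, $\xi_{12}'=-\xi_0$, $\xi_{13}'=\xi_{23}$, $\xi_{23}'=\xi_{13}$, which is the claimed $\phi'$. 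Since $A$ carries the $T^{0,1}$-space of $J_\phi$ to the $T^{0,1}$-space of $J_{\phi'}$, and $A$ is real orthogonal, conjugation by $A$ carries the endomorphism $J_\phi$ to $J_{\phi'}$, i.e. $J'=AJA^{-1}$.

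I expect the main obstacle to be purely a matter of \emph{sign/index discipline}: one has to be careful that the matrix $A$ does indeed lie in $\SO(6)$ (so that the resulting $J'$ again has Pfaffian $1$ and the formula $\Pf(AJA^{-1})=(\det A)\Pf J$ from Subsection~\ref{ocssec} does not flip orientation), and that the relabelling of the spanning vectors in (\ref{perm}) is matched correctly against the \emph{ordered} template in (\ref{vvv}) and (\ref{etaiii}) — in particular that the role of $v_3$ is played by the ``$v_{123}$-type'' vector after the swap. None of this is deep; it is the verification already half-carried-out in the paragraph preceding the proposition (\emph{``We have arranged (\ref{perm}) so that\dots An inspection of the new coefficients yields\dots''}), so the proof amounts to making that inspection explicit and noting $\det A=1$.
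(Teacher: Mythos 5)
Your overall strategy is the one the paper itself intends (the paper offers no more than the sentence ``an inspection of the new coefficients yields\dots''), and your preliminary points are fine: the relation $\xi_0 v_{123}=\xi_{23}v_1-\xi_{13}v_2+\xi_{12}v_3$ does show that \eqref{vvv} and \eqref{perm} span the same space, and the observation $\det A=1$ (two commuting reflections) is correct and worth making. The problem is that the one step that carries the entire content of the proposition — the coefficient inspection — is not carried out correctly in your write-up. Concretely: (a) in your third displayed vector you replaced $\od_3$ by $\pd_3$, but the transformation only conjugates $z^1,z^2$, so the correct image of $v_{123}=\xi_{12}\od_3+\xi_{23}\od_1-\xi_{13}\od_2$ is $\xi_{12}\od_3+\xi_{23}\pd_1-\xi_{13}\pd_2$; (b) your second displayed vector ends with $\xi_0\od_1$, inherited from the printed $\xi_0\pd_1$ in \eqref{perm} — but if \eqref{perm} is to consist of the same vectors as \eqref{vvv} (which your span argument requires), that term must be $\xi_0\od_1$ before the swap and hence $\xi_0\pd_1$ after it; as you have written it, the vector cannot be matched against the template $v_2'=\xi_0'\od_2-\xi_{23}'\pd_3+\xi_{12}'\pd_1$ at all.

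More seriously, the final ``one reads off exactly $\xi_{13}'=\xi_{23}$, $\xi_{23}'=\xi_{13}$'' is asserted rather than derived, and it does not follow from your displayed vectors — nor from the corrected ones. If you fix the scale of the spinor using the first vector ($\xi_0'=\xi_{12}$, $\xi_{12}'=-\xi_0$, $\xi_{13}'=\xi_{23}$), then the corrected second vector $\xi_0\pd_1-\xi_{12}\od_2-\xi_{13}\pd_3$ must be matched against $-v_2'$, which forces $\xi_{23}'=-\xi_{13}$, and the corrected third vector matched against $v_3'$ gives the same $\xi_{23}'=-\xi_{13}$. (A sanity check with $\phi=[1,0,1,0]$: the $(1,0)$-space of $AJ_\phi A^{-1}$ is $\left<d\bz^2,\ d\bz^1-d\bz^3,\ dz^1+dz^3\right>$, which is the span attached to $[\xi_{12},-\xi_0,\xi_{23},-\xi_{13}]=[0,-1,0,-1]$, not to $[0,-1,0,1]$.) So relative to the printed formulas \eqref{etai6}, \eqref{etaiii}, \eqref{vvv}, the inspection yields $\phi'=[\xi_{12},-\xi_0,\xi_{23},-\xi_{13}]$; a correct proof must either obtain this and reconcile it with the statement (there is evidently a sign/typo issue between \eqref{perm} and the proposition that has to be resolved explicitly, also keeping a common projective scale across the three matched vectors), or explain why the stated $+\xi_{13}$ is right under some other convention. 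As written, your argument never verifies the claimed formula for $\phi'$, which is the whole point of the proposition.
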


Given this proposition (and analogues for other permutations of the
spinor coordinates), we can now concentrate on the case in which
$T^{0,1}$ is spanned by (\ref{vvv}) and we can take $\xi_0=1$. We may
write
\begin{align*}
v_i = w_i+\sqrt{-1}Jw_i,
\end{align*}
where $w_i$ are \emph{real} vectors. In this basis, the OCS is
represented by the block diagonal matrix
\begin{align}
\label{bdf}
\bJ = \mbox{diag}(J_0, \dots, J_0),
\end{align}
where
\begin{align}
  \label{J0}
J_0 =
\left(
\begin{matrix}
0 & -1 \\
1 & 0  \\
\end{matrix}
\right)
\end{align}
represents the standard complex structure on $\rr^2$. 

We let $A$ denote the real matrix corresponding to 
the basis change
\begin{align*}
  ( w_1,Jw_1, w_2,Jw_2, w_3,Jw_3)^\top= A \Big(
  \frc{\partial}{\partial x_1}, \frc{\partial}{\partial y_1},
  \frc{\partial}{\partial x_2}, \frc{\partial}{\partial y_2},
  \frc{\partial}{\partial x_3}, \frc{\partial}{\partial y_3}
  \Big)^{\!\top}.
\end{align*}
A computation shows that 
\begin{align*}
A = 
\left(
\begin{matrix}
-1  &  0  &   f_{12} &  g_{12}  &  f_{13}  & g_{13} \\
0   & -1  &   g_{12} & -f_{12}  &  g_{13}  & - f_{13} \\
-f_{12} & - g_{12} & -1 & 0 & f_{23}  &   g_{23}   \\
-g_{12} & f_{12}  & 0  & -1 &   g_{23}  &  -f_{23} \\
-f_{13} & - g_{13}  &  - f_{23}  & -g_{23} & -1  & 0 \\
-g_{13}  & f_{13} &  - g_{23} & f_{23}   & 0  &  -1 \\
\end{matrix}
\right), 
\end{align*}
where $\xi_{ij} = f_{ij} + \sqrt{-1} g_{ij} $. This discussion yields
the

\begin{proposition}
  The skew-symmetric orthogonal matrix corresponding to the projective
  spinor $\phi=[1, \xi_{12}, \xi_{13}, \xi_{23}]$ is
\begin{align*}
  J_\phi = A\bJ A^{-1}.
\end{align*}
\end{proposition}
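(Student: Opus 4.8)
The plan is to read the formula off directly from the construction just carried out; the only real work is the linear-algebra bookkeeping already recorded in the displayed matrix $A$. We are in the case $\xi_0=1$, so $T^{0,1}$ is spanned by the three vectors $v_1,v_2,v_3$ of (\ref{vvv}). The first step is to split each $v_i$ into its real and imaginary parts: substituting $\partial_j=\tfrac12(\partial/\partial x_j-\sqrt{-1}\,\partial/\partial y_j)$ and $\bar\partial_j=\tfrac12(\partial/\partial x_j+\sqrt{-1}\,\partial/\partial y_j)$, together with $\xi_{ij}=f_{ij}+\sqrt{-1}\,g_{ij}$, into $v_i=w_i+\sqrt{-1}\,Jw_i$ exhibits $w_i$ and $Jw_i$ as explicit real combinations of the coordinate vector fields.

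The second step records two structural observations. Since $v_i$ lies in the $-\sqrt{-1}$-eigenspace of $J=J_\phi$, the splitting $v_i=w_i+\sqrt{-1}\,Jw_i$ is automatically compatible with $J$: comparing real and imaginary parts of $Jv_i=-\sqrt{-1}\,v_i$ shows that $Jw_i$ really is $J$ applied to $w_i$ and that $J(Jw_i)=-w_i$. Moreover, because $\xi_0\neq0$, the six real vectors $w_1,Jw_1,w_2,Jw_2,w_3,Jw_3$ are linearly independent, hence a basis of $\rr^6$, and in this basis $J$ is the block-diagonal matrix $\bJ=\mathrm{diag}(J_0,\dots,J_0)$ of (\ref{bdf}). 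Reading off the coefficients of the $w_i,Jw_i$ in the standard basis is exactly the computation that produces $A$, and then $J_\phi=A\bJ A^{-1}$ is just the change-of-basis formula for the matrix of a linear map; the side on which $A$ appears is pinned down by the defining relation $(w_1,Jw_1,\dots)\tp=A(\partial/\partial x_1,\dots)\tp$, and the cleanest safeguard is to evaluate both descriptions of $J_\phi$ on a single vector, e.g.\ to compute $J_\phi(\partial/\partial x_1)$ directly from the eigenspace picture and compare it with $A\bJ A^{-1}$.

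For completeness one should check that this is consistent with $J_\phi$ being a genuine OCS, which also explains why the formula is valid even though $A$ itself is not orthogonal. One has $J_\phi^2=A\bJ^2A^{-1}=-I$ at once. For skew-symmetry, write $A=-I+B$: inspection of the displayed matrix shows that every $2\times 2$ off-diagonal block of $B$ is a real linear combination of $\bigl(\begin{smallmatrix}1&0\\0&-1\end{smallmatrix}\bigr)$ and $\bigl(\begin{smallmatrix}0&1\\1&0\end{smallmatrix}\bigr)$, each of which anticommutes with $J_0$, so $\bJ B=-B\bJ$; a short manipulation with this identity gives $J_\phi\tp=-J_\phi$, and together with $J_\phi^2=-I$ this places $J_\phi$ in $\SO(6)$, with positive Pfaffian by continuity from the base case $A=-I$ (cf.\ Subsection~\ref{ocssec}).

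I expect the only obstacle to be bookkeeping rather than substance: getting the factors of $\tfrac12$ and the signs correct when passing from the complex frame $\{\partial_j,\bar\partial_j\}$ to the real frame, matching $w_i$ with the real part and $Jw_i$ with the imaginary part, and fixing the side of the conjugation. This is precisely the ``a computation shows that $A=\dots$'' step that precedes the statement; no new idea beyond what is already in place is required.
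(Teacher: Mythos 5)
Your proposal is correct and follows essentially the same route as the paper, which presents the proposition as an immediate consequence of the preceding discussion: writing $v_i=w_i+\sqrt{-1}\,Jw_i$, noting that $J$ is the block matrix $\bJ$ in the real frame $(w_1,Jw_1,\dots,w_3,Jw_3)$, and reading off $J_\phi=A\bJ A^{-1}$ as the change-of-basis formula with the displayed matrix $A$. Your additional checks (eigenspace compatibility, linear independence when $\xi_0\ne0$, and the skew-symmetry argument via $A=-I+B$ with $B$ anticommuting with $\bJ$) are sound routine verifications of what the paper leaves implicit.
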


We will not write out the entire formula here, but we note that the
matrix
\begin{align*}
(1 + |\xi_{12}|^2 + |\xi_{13}|^2 + | \xi_{23}|^2)J_\phi
\end{align*}
has quadratic entries in the $f_{ij}$ and $g_{ij}$,
which is straightforward to verify. 
We next consider a special case.
\begin{proposition}
  If $\phi=[1,\xi_{12}, 0, 0]$ then $J_\phi$ is a product OCS of the
  form
\begin{align*}
J = J(\xi_{12})  \oplus J_0, 
\end{align*}
where $J(\xi_{12})$ is the linear OCS on $\rr^4 = \{(z^1,
z^2,0):z^i\in\cc\}$ corresponding to $[1,\xi_{12}]$, and $J_0$ acts on
$\rr^2 = \{(0,0,z^3):z^3\in\cc \}$ as in {\rm(\ref{J0})}.
\end{proposition}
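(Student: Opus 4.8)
The plan is to compute $J_\phi$ directly from the matrix $A\bJ A^{-1}$ in the special case $\xi_{12}=f_{12}+\sqrt{-1}\,g_{12}$ arbitrary, $\xi_{13}=\xi_{23}=0$, and observe that it splits into a $4\times4$ block acting on $(x_1,y_1,x_2,y_2)$ and a $2\times2$ block acting on $(x_3,y_3)$. Setting $f_{13}=g_{13}=f_{23}=g_{23}=0$ in the formula for $A$, the matrix becomes block diagonal with a $4\times 4$ block
\begin{align*}
A_4=\left(\begin{matrix}
-1 & 0 & f_{12} & g_{12}\\
0 & -1 & g_{12} & -f_{12}\\
-f_{12} & -g_{12} & -1 & 0\\
-g_{12} & f_{12} & 0 & -1
\end{matrix}\right)
\end{align*}
and the $2\times2$ block $-I_2$ in the last two coordinates. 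Since $A$ is block diagonal, so is $A\bJ A^{-1}$, the last block being $(-I_2)J_0(-I_2)^{-1}=J_0$, which is exactly the standard $J_0$ on the $z^3$-plane.

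Next I would identify the $4\times4$ block $A_4\,\mathrm{diag}(J_0,J_0)\,A_4^{-1}$ with the linear OCS $J(\xi_{12})$ on $\rr^4$. The cleanest way is to invoke the case $n=2$ of the preceding constructions: by (\ref{etaeta}), the spinor $[1,\xi_{12}]\in\Z2\cong\cp^1$ has $T^{0,1}$ spanned by $v_1=\od_1-\xi_{12}\pd_2$, $v_2=\od_2+\xi_{12}\pd_1$ (the $n=3$ vectors (\ref{vvv}) with $\xi_{13}=\xi_{23}=0$ simply drop the $\pd_3$ terms and $v_{123}$ becomes $\xi_{12}\od_3$, consistently). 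Writing $v_i=w_i+\sqrt{-1}Jw_i$ and repeating the basis-change computation in four dimensions reproduces precisely $A_4$ as the $\rr^4$-analogue of $A$; hence $A_4\,\mathrm{diag}(J_0,J_0)\,A_4^{-1}$ is by definition $J_{[1,\xi_{12}]}=J(\xi_{12})$. Therefore $J_\phi=J(\xi_{12})\oplus J_0$ in the stated coordinates.

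The only real content is bookkeeping: checking that the $n=3$ frame vectors (\ref{vvv}) and the resulting matrix $A$ genuinely restrict, when $\xi_{13}=\xi_{23}=0$, to the direct sum of the $n=2$ data on $\{z^3=0\}$ and the trivial data on the $z^3$-line, with no cross terms. This is transparent from the explicit form of $A$ above, and from the fact that $v_{123}=\xi_{12}\od_3$ involves only $\od_3$, so it does not mix the $z^3$-plane with the rest; equivalently, one can note that the real vectors $w_3,Jw_3$ are just $\pd/\pd x_3,\pd/\pd y_3$ up to the scalar $|\xi_{12}|$, which cancels. I do not anticipate a genuine obstacle here; the statement is essentially a direct verification, and the main point is simply to record that the product structure of $A$ forces the product structure of $J_\phi$ and to match the $4\times4$ factor against the two-dimensional case already set up in Subsection~\ref{lowdims}.
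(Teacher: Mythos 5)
Your proposal is correct and takes essentially the same route as the paper: the paper's proof simply records the explicit $6\times6$ matrix of $J_\phi=A\bJ A^{-1}$ after setting $\xi_{13}=\xi_{23}=0$ and reads off the block decomposition, while you reach the same conclusion by observing that $A$ itself becomes block diagonal (with lower block $-I_2$, conjugation by which fixes $J_0$) and identifying the $4\times4$ block with the $n=2$ construction for $[1,\xi_{12}]$. This is just a cleaner bookkeeping of the same direct computation, and no step is missing.
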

\begin{proof}
  A computation shows that as a matrix, $J_\phi$ equals
\begin{align*}
\frac1{1 + |\xi_{12}|^2} 
\left(
\begin{matrix}
0  & |\xi_{12}|^2 - 1 & -2 g_{12} & 2 f_{12} & 0 & 0  \\
1 - |\xi_{12}|^2 & 0 & 2 f_{12} & 2 g_{12} & 0 & 0 \\
2 g_{12} & - 2f_{12} & 0 & |\xi_{12}|^2 -1 & 0 & 0 \\
-2 f_{12} & -2 g_{12} & 1 - |\xi_{12}|^2 & 0 & 0 & 0 \\
0 & 0 & 0 & 0 & 0 & - (1 +|\xi_{12}|^2) \\
0 & 0 & 0 & 0 & 1 + |\xi_{12}|^2 & 0 \\
\end{matrix}
\right),
\end{align*}
which implies the proposition. 
\end{proof}

In higher dimensions we have the following. Let
\begin{align}
\label{bxi}
\bxi = (\xi_0,\xi_{12},\ldots,\xi_{1234},\ldots,\xi_{\cdots n})
\in\Delta^+
\end{align}
be a pure spinor, with skew-symmetric components (\ref{groups}).

\begin{proposition}
\label{gendim}
Suppose that all the components of {\rm(\ref{bxi})} that contain an
index $n$ vanish. Then the associated OCS has the form
\begin{align*}
J_\sbxi = J_{2n-2}  \oplus J_0, 
\end{align*}
where $J_{2n-2}$ is the linear OCS on $\rr^{2n-2} = \{(z^1, \dots
z^{n-1},0):z^i\in\cc\}$, corresponding to the spinor with components
$\xi_{i_i\cdots i_k}$ with $1 \le i_1 < \dots < i_k \le n-1$.
\end{proposition}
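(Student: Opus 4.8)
\textbf{Proof plan for Proposition \ref{gendim}.}
The strategy is to reduce the general-dimensional statement to the dimension-6 computation already carried out above, by isolating the role of the $n$-th complex coordinate. First I would observe that the hypothesis---all components $\xi_{i_1\cdots i_k}$ with $n\in\{i_1,\ldots,i_k\}$ vanish---means that the pure spinor $\bxi$ lies in the subspace of $\Delta_+$ built only from exterior powers of $dz^1,\ldots,dz^{n-1}$; under the identification (\ref{Delta+ext}), $\bxi$ is the image of a spinor $\bxi'$ for $\Spin(2n-2)$ acting on $\rr^{2n-2}=\{(z^1,\ldots,z^{n-1},0)\}$. The first step is therefore to check that $\bxi'$ is itself pure: this is immediate from Theorem~\ref{Cartant1}, since the purity relations (\ref{quadrics}) for $\bxi$ that involve only indices $\le n-1$ are exactly the purity relations for $\bxi'$, and those involving the index $n$ are vacuous under our hypothesis. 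So $\bxi'$ determines a linear OCS $J_{2n-2}$ on $\rr^{2n-2}$, with $T^{1,0}$ spanned by the forms $\eta'_{i_1\cdots i_q}$ from (\ref{etai})--(\ref{etaiq}) in the variables $z^1,\ldots,z^{n-1}$.

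The second step is to identify $T^{1,0}_{J_\sbxi}\subset\cc^{2n}$ explicitly and see it splits off a $dz^n$ factor. Looking at (\ref{etai}): for $i\le n-1$ the sum $\sum_k\xi_{ik}d\bz^k$ has no $d\bz^n$ term (that would be $\xi_{in}=0$), so $\eta_i=\eta'_i$ lies in the span of $dz^1,\ldots,dz^{n-1},d\bz^1,\ldots,d\bz^{n-1}$; while $\eta_n=\xi_0\,dz^n$ (all $\xi_{nk}=0$), which after scaling is just $dz^n$. Likewise, in (\ref{etaiq}), any $\eta_{i_1\cdots i_q}$ with all $i_j\le n-1$ coincides with $\eta'_{i_1\cdots i_q}$, having no $z^n$-component, and any $\eta_{i_1\cdots i_q}$ containing the index $n$ has all coefficients $\xi_{\cdots}$ either containing $n$ (hence zero) or of the form $\pm\xi_{i_1\cdots\hat{i_k}\cdots i_q}$ multiplying $dz^{i_k}$ with one of the remaining indices equal to $n$---so such an $\eta$ is a multiple of $dz^n$, already accounted for. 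Hence $T^{1,0}_{J_\sbxi}=\Span\{\eta'_{i_1\cdots i_q}:\ i_j\le n-1\}\oplus\cc\,dz^n = T^{1,0}_{J_{2n-2}}\oplus\Lambda^{1,0}(\cc\,z^n)$. The third step is to translate this eigenspace splitting into the matrix statement: an OCS whose $+i$-eigenspace is an orthogonal direct sum $W_1\oplus W_2$, with $W_1\subset(\rr^{2n-2})\otimes\cc$ and $W_2=\cc\,dz^n$, is exactly the block sum of the OCS on $\rr^{2n-2}$ with $+i$-eigenspace $W_1$ and the standard $J_0$ on the $z^n$-plane; this is the content of (the general-dimensional analogue of) the last Proposition's block computation, and requires only that the splitting respect the real structure and the metric, which it does since $\rr^{2n-2}$ and $\rr^2=\{z^n\}$ are orthogonal.

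The one point that needs a little care---and the step I would expect to be the main obstacle---is the degenerate case $\xi_0=0$, where, as noted in Subsection~\ref{lowdims}, the $\eta_i$ alone are not independent and the higher $\eta_{i_1\cdots i_q}$ genuinely enter; one must make sure that passing to $\bxi'$ does not lose information and that the "correct irreducible component" $\Y_{n-1}^+$ is the image of $\Y_n^+\cap\{\xi_{\cdots n}=0\}$. This follows from the uniqueness clause in Theorem on the biholomorphism $f:\Y_n^+\to\Z n$: both $\bxi$ and the embedded $\bxi'$ define the same maximal isotropic annihilator $(V_\sbxi)^\circ$ up to the trivial $dz^n$ summand, and purity is preserved, so no new relations are needed. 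With that settled, the block decomposition of $T^{1,0}$ gives $J_\sbxi=J_{2n-2}\oplus J_0$ directly. I would present the proof as: (1) restrict the spinor; (2) compute the eigenspace splitting from (\ref{etai})--(\ref{etaiq}); (3) read off the block form; with the $\xi_0=0$ case handled by the purity/uniqueness remark rather than by a separate calculation.
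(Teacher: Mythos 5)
Your proposal is correct. The paper itself disposes of this proposition in one line ("a computation, analogous to the previous proposition"), meaning: write down the skew-symmetric orthogonal matrix $J_{\sbxi}$ via the explicit basis-change as in the six-dimensional case and observe its block form. Your route is different in presentation and, in my view, better suited to general $n$: instead of matrix bookkeeping you work with the spanning forms \eqref{etai}--\eqref{etaiq}, noting that under the hypothesis every $\eta_{i_1\cdots i_q}$ whose multi-index avoids $n$ has no $dz^n,d\bz^n$ component and coincides with the corresponding form for the reduced spinor $\bxi'$, while every $\eta$ whose multi-index contains $n$ collapses to a multiple of $dz^n$; the eigenspace therefore splits compatibly with $\rr^{2n-2}\oplus\rr^2$, which is exactly the block statement. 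What the paper's computation buys is a completely explicit matrix (useful elsewhere in Section 3); what your argument buys is uniformity in $n$ and a cleaner treatment of the case $\xi_0=0$, which the explicit-matrix route would have to handle by permuting spinor coordinates. Two small points worth making explicit in your write-up: (a) the reduced spinor $\bxi'$ is pure either by restricting the relations \eqref{quadrics} as you say, or more directly because the span of the $\eta'$-forms is isotropic of dimension $n-1$ once you know the total span is maximal isotropic and contains $\cc\,dz^n$ as a complement; and (b) $dz^n$ really does lie in the span even when $\xi_0=0$, because some component $\xi_{i_1\cdots i_p}$ with all indices $\le n-1$ is nonzero (as $\bxi\ne0$ and all components containing $n$ vanish), $p$ is even, and appending $n$ gives the admissible odd multi-index $\eta_{i_1\cdots i_p n}=(-1)^{p}\xi_{i_1\cdots i_p}\,dz^n\neq0$. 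With these noted, the appeal to the component $\Y_{n-1}^+$ is not really needed.
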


\begin{proof}
The proof is a computation, analogous to the previous 
proposition.
\end{proof}

\section{Integrability of the twistor space}
\label{integ}

We next explain how to construct a complete set of holomorphic
coordinates on the twistor spaces
\begin{align*}
\sZ=\sZ(\rr^{2n})\subset\sZ(S^{2n})=\Z{n+1}
\end{align*}
discussed in Subsection~\ref{tfibsec}. In a sense, we have already
done this for $\Z{n+1}$, but we must now base this construction on the
twistor fiber $\Z n$ so that it is compatible with the fibration
(\ref{twist1}).

Let $\Delta_\pm$, $\tilde\Delta_\pm$ denote the spinor bundles for
$\Spin(2n)$, $\Spin(2n+2)$ respectively. When we reduce to the former 
group, it is well-known that $\tilde\Delta_+$ restricts to the total
spin representation, so that
\begin{align}
\label{tildeD}
\Z{n+1}\subset\PP(\tilde\Delta_+)=\PP(\Delta_+\oplus\Delta_-).
\end{align}
The idea is to characterize elements of $\Z{n+1}$ in these terms, and
then translate (\ref{Delta+ext}) and (\ref{Delta-ext}) into explicit
formulae.

\begin{theorem}
\label{Clifford}
Suppose that $0\ne\phi\in\Delta_+$ and $\psi\in\Delta_-$. Then
$[\phi,\psi]\in \Z{n+1}$ if and only if $[\phi]\in\Z n$ and
$\psi=\vv\cdot\phi$ for some \emph{unique} $\vv\in\rr^{2n}$.
\end{theorem}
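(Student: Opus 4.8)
The plan is to view $\Z{n+1}\subset\PP(\tilde\Delta_+)=\PP(\Delta_+\oplus\Delta_-)$ as the parametrizing variety of maximal isotropic subspaces of $\cc^{2n+2}=\cc^{2n}\oplus\cc^2$, and to use the Clifford-algebra structure that governs this embedding. Write $\cc^{2n+2}=\cc^{2n}\oplus\Span\{e_+,e_-\}$ with $e_\pm$ isotropic vectors pairing to $1$; then Clifford multiplication by $e_+,e_-$ realizes $\tilde\Delta_+\cong\Delta_+\oplus\Delta_-$ with $e_+$ acting as a creation/annihilation operator interchanging the two summands. The first step is to make this explicit: for $\Phi=\phi+\psi$ (meaning $[\phi,\psi]$) with $\phi\in\Delta_+$, $\psi\in\Delta_-$, compute how a general vector $v+ae_++be_-\in\cc^{2n+2}$ acts, namely $v\cdot(\phi+\psi)=(v\cdot\phi)+(v\cdot\psi)$ plus the terms from $e_\pm$ that swap components and contribute scalars. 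The condition for $\Phi$ to be a pure spinor is that its annihilator $V_\Phi=\{u\in\cc^{2n+2}:u\cdot\Phi=0\}$ is maximal isotropic, i.e.\ $(n+1)$-dimensional.

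Next I would split into the two directions. For the ``only if'' direction, assume $[\phi,\psi]\in\Z{n+1}$, so $V_\Phi$ is $(n+1)$-dimensional isotropic in $\cc^{2n+2}$. Since $\phi\ne0$, I claim $e_+\notin V_\Phi$: writing out $e_+\cdot\Phi$ one finds its $\Delta_+$-component is a nonzero multiple of $\phi$ (Clifford multiplication by $e_+$ sends the $\Delta_-$-part back to $\Delta_+$ faithfully), hence nonzero. Therefore the projection $\cc^{2n+2}\to\cc^{2n}$ restricted to $V_\Phi$ has image of dimension at least $n$ — in fact exactly $n$ after intersecting with a hyperplane, and one checks this image is isotropic in $\cc^{2n}$ and annihilates $\phi$ (the $e_\pm$-components drop out of the $\phi$-equation). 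This forces $[\phi]\in\Z n$. Then, using that $V_\Phi$ is one dimension larger, there is an extra annihilating vector of the form $v-e_-$ (after normalization), say, and unwinding $(v-e_-)\cdot\Phi=0$ using the Clifford relations gives precisely $\psi=v\cdot\phi$; uniqueness of $v$ follows because two such would differ by a vector annihilating $\phi$, i.e.\ an element of $V_\phi=T^{1,0}$ (relative to $J_\phi$), but then $v\cdot\phi=v'\cdot\phi$ already, and a dimension count on the annihilator pins $v$ down modulo $V_\phi$ — and in fact the natural representative is unique in $\rr^{2n}$ after imposing the reality that matches the twistor fibration normalization.

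For the ``if'' direction, suppose $[\phi]\in\Z n$ and $\psi=v\cdot\phi$. I would directly exhibit an $(n+1)$-dimensional isotropic subspace annihilating $\Phi=\phi+\psi$: take $V_\phi\subset\cc^{2n}$, which is $n$-dimensional isotropic and annihilates $\phi$; show each $u\in V_\phi$ also annihilates $\psi=v\cdot\phi$ because $u\cdot v\cdot\phi=-v\cdot u\cdot\phi+2\langle u,v\rangle\phi$ and one arranges the pieces (possibly absorbing the scalar term by adjoining a suitable $e_\pm$-component to $u$); then adjoin one more vector built from $v$ and $e_-$ (and a correction in $\cc^{2n}$) that annihilates $\Phi$, giving total dimension $n+1$, and verify isotropy of the whole span using $\langle e_+,e_-\rangle=1$ and isotropy of $V_\phi$ and of $v$ against $V_\phi$. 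By Cartan's criterion (Theorem~\ref{Cartant1}) this maximal isotropic annihilator means $\Phi$ is pure, i.e.\ $[\phi,\psi]\in\Z{n+1}$.

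The main obstacle I anticipate is bookkeeping the signs and scalar terms in the Clifford relations under the splitting $\tilde\Delta_+\cong\Delta_+\oplus\Delta_-$ — in particular, getting the exact normalization of $e_\pm$ and the grading so that ``$\psi=v\cdot\phi$'' comes out with $v$ \emph{real} in $\rr^{2n}$ (rather than in $\cc^{2n}$) and genuinely unique, which is the content that makes this compatible with the twistor fibration $\Z n\to\Z{n+1}\to S^{2n}$ and the decomposition \eqref{decree}. A clean way to sidestep brute computation is to note that the fiber of $\sZ(S^{2n})\to S^{2n}$ over a point is $\Z n$, that $\Z{n+1}$ is homogeneous under $\Spin(2n+2)$, and that the subgroup $\Spin(2n)\times\Spin(2)$ acts with $\rr^{2n}$ as the relevant isotropy summand; equivariance then forces the parametrization to be exactly $[\phi,v\cdot\phi]$ with $v\in\rr^{2n}$, and one only needs the explicit Clifford computation at a single basepoint $\phi$ to fix constants.
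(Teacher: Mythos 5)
Your proposal is essentially correct, but it takes a genuinely different route from the paper. The paper splits the statement into three pieces: uniqueness is dispatched in one line from \eqref{isotropic} (a real vector lying in the isotropic space $V_\phi$ is null, hence zero); the ``if'' direction is proved by representation theory, applying Cartan's criterion (Theorem~\ref{Cartant1}) to $\wt\phi\otimes\wt\phi$ and using Schur's lemma to see that the components $\phi\otimes\phi$, $(\vv\cdot\phi)\otimes(\vv\cdot\phi)$ and the mixed term can only project nontrivially to $\wt\Lambda^{n+1}_+$, since the lower summands of $S^2(\wt\Delta_+)$ contain no exterior powers of degree exceeding $n-3$; and the ``only if'' direction is obtained globally, by checking that $(\vv,[\phi])\mapsto[\phi,\vv\cdot\phi]$ is an injective map of full rank whose open image must be $\pi^{-1}(\rr^{2n})$, so that the complement is exactly $\{\phi=0\}$. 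You instead work pointwise with annihilators in the splitting $\cc^{2n+2}=\cc^{2n}\oplus\Span\{e_+,e_-\}$, extracting both directions from the two component equations of $(u+ae_++be_-)\cdot\Phi=0$; this is more elementary, treats both implications uniformly, and avoids the full-rank/openness check, at the cost of the convention-dependent bookkeeping you acknowledge (e.g.\ your parenthetical reason that $e_+\notin V_\Phi$ is garbled as stated --- what survives in $e_+\cdot\Phi$ is $e_+$ acting on the $\Delta_+$-part $\phi$, not the $\Delta_-$-part being ``sent back''; the conclusion is right once the polarization is fixed). The paper's global argument also hands it, for free, the identification of the image with $\pi^{-1}(\rr^{2n})$ that is used immediately afterwards.

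The one point you leave genuinely unresolved --- and flag yourself --- is why $v$ can be taken \emph{real} and is then unique: your construction first produces a complex $w\in\cc^{2n}$ with $\psi=w\cdot\phi$. The fix is short and is in effect the paper's uniqueness argument: since $V_\phi$ is isotropic and the real inner product is positive definite, $V_\phi\cap\rr^{2n}=0$, so by dimension count $\cc^{2n}=V_\phi\oplus\rr^{2n}$ as real vector spaces; writing $w=u_0+v$ with $u_0\in V_\phi$ and $v\in\rr^{2n}$ gives $\psi=v\cdot\phi$ with $v$ real, and two real solutions differ by an element of $V_\phi\cap\rr^{2n}=0$. With that inserted (and the $e_\pm$ conventions pinned down), your argument goes through. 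Your suggested ``equivariance'' shortcut, by contrast, comes close to assuming the parametrization of $\Z{n+1}\setminus\Z n$ that the theorem is meant to establish, so I would not rely on it.
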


\noindent Recall that $\cdot$ denotes Clifford multiplication, and
that the assertion $[\phi]\in\Z n$ means that $\phi$ is a \emph{pure}
spinor.

\begin{proof} First observe that if $\vv\cdot\phi=\vv'\cdot\phi$ with 
$\vv,\vv'\in\rr^{2n}$ then $\vv-\vv'\in V_\phi$ which forces
  $\vv=\vv'$ (recall (\ref{isotropic})). So uniqueness is immediate.

We next prove that if $[\phi]\in Z_n$ then (relative to
(\ref{tildeD})) $[\phi,\,\vv\cdot\phi]\in Z_{n+1}$. Fix a pure spinor
$\phi$, and $\vv\in\rr^{2n}$. We may choose a basis $(\ee_i)$ of
$\rr^{2n}$ such that $\vv=\ee_{2n}$ and the annihilator of $\phi$ is
spanned by $\alpha_k=\ee_{2k-1}+i\kern1pt \ee_{2k}$ for
$k=1,\ldots,n$. A quick calculation reveals that the annihilator of
the Clifford product $\vv\cdot\phi$ is spanned by
$\alpha_1,\ldots,\alpha_{n-1},\overline{\alpha}_n$, and it follows
that $\vv\cdot\phi$ is a pure spinor in $\Delta_-$.

Using tildes to refer to $\rr^{2n+2}$, we want to show that the
``paired spinor''
\[\wt\phi= (\phi,\ \vv\cdot\phi) \in \wt\Delta_+\] is also pure.
To do this, we shall apply Theorem~\ref{Cartant1} to
$\wt\phi\otimes\wt\phi$, which we need to show belongs to the
underlined summand in
\begin{align}
\label{S2D}
S^2(\wt\Delta_+)\>\cong\>\underline{\wt\Lambda^{n+1}_+}\oplus
\wt\Lambda^{n-3}\oplus\wt\Lambda^{n-7}\oplus\cdots
\end{align}
(where $\wt\Lambda^\ell$ is absent if $\ell<0$). We shall do this by
considering equivariant mappings between irreducible $G$-modules,
where $G=\Spin(2n)$. Schur's lemma is the assertion that any such
mapping $f$ is either zero or an isomorphism (and is obvious since the
kernel and image of $f$ are $G$-invariant subspaces).

Now $\wt\phi\otimes\wt\phi$ is a sum of
\begin{align}
\label{LL}
\phi\otimes\phi\in\Lambda^n_+,\qquad
      (\vv\cdot\phi)\otimes(\vv\cdot\phi) \in \Lambda^n_-,
\end{align} 
and the ``mixed'' product, itself a contraction of
\begin{align}
\label{RL}
\vv\otimes(\phi\otimes\phi)\in\rr^{2n}\otimes\Lambda^n_+.
\end{align}
Using an algorithm to commute the irreducible summands of a tensor
product as in \cite{Fegan}, we see that the last space contains only
one $G$-summand isomorphic to an exterior power, namely
$\Lambda^{n+1}\cong\Lambda^{n-1}$. The exterior powers
$\Lambda^k=\Ext^k(\rr^{2n})$ for $0\le k\le n\!-\!1$, together with
$\Lambda^n_+$ and $\Lambda^n_-$, are of course distinct irreducible
$G$-modules.

Each right-hand summand of (\ref{S2D}), other than the first,
decomposes as
\[\wt\Lambda^k\>=\>\Ext^k(\rr^{2n}\oplus\rr^2)\>\cong\>
\Lambda^k\oplus (\Lambda^{k-1}\otimes\rr^2)\oplus \Lambda^{k-2},\] and
is the sum of at most four exterior powers, each of degree no greater
than $n-3$. It follows that the $G$-equivariant projections of the
elements (\ref{LL}) and (\ref{RL}) in (\ref{S2D}) do indeed all lie in
the first summand $\wt\Lambda^{n+1}_+$. The same is therefore true of
$\wt\phi\otimes\wt\phi$.

To sum up, $(\vv,[\phi])\mapsto[\phi,\,\vv\cdot\phi]$ defines a smooth
injective mapping
\[ \rr^{2n}\times Z_n\to Z_{n+1}.\] We leave the reader to
check that the differential of this mapping has full rank, so that its
image is open. A comparison with (\ref{twist1}) shows that we may
identify this image with $\pi^{-1}(\rr^{2n})$, in which case the
missing fibre $\pi^{-1}(\infty)$ consists of those points $[0,\psi]$
for which $\phi$ vanishes.
\end{proof}

We can now use the mapping
\begin{align*}
\begin{array}{c}
\Z{n+1}\setminus \Z n \to \rr^{2n},\qquad
[\phi,\,\vv\cdot\phi] \mapsto\vv
\end{array}
\end{align*}
to realize the twistor projection. We shall make it explicit in order
to parametrize $\Z{n+1}$ as a \emph{complex analytic} manifold.

To this aim, we first introduce quantities
\begin{align}
\label{Wi}
W_i = \xi_0 z^i - \suml_{k=1}^n \xi_{im}\bz^m,\qquad i=1,\ldots,n
\end{align}
This definition is an exact parallel of (\ref{etai}), and each $W_i$
is merely the contraction of $\eta_i$ with an arbitrary vector
$v\in\rr^{2n}$ expressed with coordinates $z^1,\ldots,\bz^n$. It
follows that the $W_i$ can be regarded as the components of the
Clifford product $\vv\cdot\bxi$ in the first summand $\Lambda^{1,0}$ of
(\ref{Delta-ext}). Next, we treat $W_i$ as a function of both $\bxi$
and $\vv$.

A crucial observation is that
\begin{align*}
dW_i=\eta_i+z^id\xi_0-\suml_{k=1}^n \bz^m d\xi_{im}.
\end{align*}
is a $(1,0)$-form relative to the complex structure on $\rr^{2n}$
defined by any point $J\in\Z n$ whose homogeneous coordinates in
$\cp^{N-1}$ start with $\xi_0$ and the $\xi_{ij}$. It follows that
each $W_i$ is a \emph{holomorphic} function on the twistor space
$(\sZ(\rr^{2n}),\sJ)$; this is because the value $\sJ$ at
$(J,\mathbf{x})$ is defined relative to the complex structure that $J$
itself induces on $\rr^{2n}$; recall (\ref{decree}).

More generally, and in parallel to (\ref{etaiq}), we define functions
\begin{align}
\label{Wdef}
W_{i_1\dots i_q} = \suml_{k=1}^q (-1)^{k-1} \xi_{i_1\dots\hat{i_k}\dots i_q}
z^{i_k} - \suml_{m=1}^n\xi_{i_1 \dots i_q m} \bz^m,\ \quad q\ge3\hbox{ odd}.
\end{align}
These are effectively the components of $\vv\cdot\bxi$ in
$\Lambda^{q,0}$ in \eqref{Delta-ext}, and the above considerations
apply. Representing a point of $\cc^n=(\rr^{2n},\bJ)$ by
$\bfz=(z^1,\ldots,z^n)$, we are now in a position to define a map
\begin{align}
\label{FFF}
\FFF:\Z n \times\rr^{2n} \rightarrow\Z{n+1} \subset\cp^{2N-1},
\end{align}
by 
\begin{align*}
([ \xi_0, \xi_{12},\ldots, \xi_{1\cdots n}],\, \bfz) \mapsto
[ \xi_0, \xi_{12}, \dots, \xi_{1\cdots n}, W_1,\ldots,W_{123},\ldots,
W_{2\cdots n}],\\[2pt]
([ \xi_0, \xi_{12},\ldots, \xi_{2\cdots n}],\,\bfz) \mapsto
[ \xi_0, \xi_{12}, \dots, \xi_{2\cdots n}, W_1,\ldots,W_{123},\ldots,
W_{1\cdots n}],
\end{align*}
according as $n$ is even or odd, respectively.

\begin{example}\label{6dim}
{\em When $n=3$, we are mapping $([\bxi],\bfz)$ to
\begin{align}
\label{xiW}
[\xi_0, \xi_{12}, \xi_{13}, \xi_{23}, W_1, W_2, W_3, W_{123}],
\end{align}
where 
\begin{align}
\begin{split}
\label{Wi6}
W_1 &= \xi_0z^1-\xi_{12}\bz^2-\xi_{13}\bz^3\\
W_2 &= \xi_0z^2-\xi_{23}\bz^3+\xi_{12}\bz^1\\
W_3 &= \xi_0z^3+\xi_{13}\bz^1+\xi_{23}\bz^2,
\end{split}
\end{align}
(cf.~(\ref{etai6})), and 
\begin{align}
\label{W123}
W_{123} = z^1 \xi_{23} - z^2 \xi_{13} + z^3 \xi_{12}. 
\end{align}
It follows that 
\begin{align}
\label{xidotW}
\xi_0W_{123}=\xi_{12}W_3-\xi_{13}W_2+\xi_{23}W_1,
\end{align}
which is a reincarnation of (\ref{1324}) in the twistor context.
Slightly different versions of this quadratic equation will recur
repeatedly in the remainder of this paper.}
\end{example}

In conclusion, 

\begin{theorem}
\label{BIHOLO}
  The map $\FFF$ is a biholomorphism from $\Z n\times \rr^{2n}$ to
  $\Z{n+1} \setminus\Z n$, where $\Z n \times \rr^{2n}$ has the
  complex structure $\sJ$, and $\Z{n+1}$ is a complex submanifold of
  $\cp^{2N-1}$.  The missing $\Z n$ is given by points with
  the first $N=2^{n-1}$ coordinates equal to zero, i.e., points of the
  form $[ 0, \dots, 0, W_1, \dots, W_{\cdots n}]$.
\end{theorem}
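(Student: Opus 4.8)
The plan is to establish that $\FFF$ is a biholomorphism onto its stated image by combining the representation-theoretic identification of Theorem~\ref{Clifford} with the explicit coordinate formulas~\eqref{Wi}, \eqref{Wdef}. First I would recall from Theorem~\ref{Clifford} that, at the level of \emph{sets}, points of $\Z{n+1}\setminus\Z n$ (i.e.~those with $\phi\ne0$) are in bijection with pairs $([\phi],\vv)\in\Z n\times\rr^{2n}$ via $[\phi,\vv\cdot\phi]$, the $\Z n$ being recovered from~\eqref{psi+}. The content of the present theorem is that, under this bijection, (a) the map is holomorphic with holomorphic inverse, where the source carries the twistor structure $\sJ$, and (b) the homogeneous coordinates of $[\phi,\vv\cdot\phi]$ in $\cp^{2N-1}$ are exactly $[\xi_0,\ldots,\xi_{\cdots n},W_1,\ldots,W_{\cdots n}]$ with the $W$'s given by~\eqref{Wi}, \eqref{Wdef}. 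Step (b) is a direct unwinding: the first $N$ coordinates are the components of $\phi=\bxi$ in the basis~\eqref{Delta+ext}, and the formulas for $W_i$ and $W_{i_1\cdots i_q}$ are precisely the components of $\vv\cdot\bxi$ in $\Lambda^{q,0}\subset\Delta_-$, obtained by applying the exterior-plus-interior-product description of Clifford multiplication used already to derive~\eqref{etai} and~\eqref{etaiq} — indeed $W_{i_1\cdots i_q}$ is just $\eta_{i_1\cdots i_q}$ contracted against $\vv$, exactly as noted after~\eqref{Wi}.

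Next I would prove holomorphicity of $\FFF$. The components $\xi_\bullet$ are holomorphic on $\Z n$ by construction ($\Y_n^+\subset\cp^{N-1}$ and $f$ is a biholomorphism, Theorem in Section~\ref{holocor}), so it suffices to show each $W_{i_1\cdots i_q}$, viewed as a function on $\Z n\times\rr^{2n}$, is holomorphic for $\sJ$. This is the crucial observation already made in the text preceding~\eqref{Wdef}: the differential
\begin{align*}
dW_i=\eta_i+z^id\xi_0-\suml_{k=1}^n\bz^m d\xi_{im}
\end{align*}
is of type $(1,0)$ at $(J,\mathbf{x})$, because $d\xi_0,d\xi_{im}$ are $(1,0)$ on the fiber $\Z n$ (vertical directions), $z^id\xi_0-\sum\bz^m d\xi_{im}$ has no $d\bz$-part, and $\eta_i$ is by definition $(1,0)$ for the complex structure $J$ induces on the horizontal $\rr^{2n}$ — which is precisely how $\sJ$ is decreed in~\eqref{decree}. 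The same argument applies verbatim to $W_{i_1\cdots i_q}$ using~\eqref{etaiq}. Hence $\FFF$ is holomorphic.

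To finish I would invoke that $\FFF$ is a bijection onto $\Z{n+1}\setminus\Z n$ — which follows from Theorem~\ref{Clifford} together with the identification of coordinates in step (b) — and that a holomorphic bijection between complex manifolds of the same dimension whose source and target are both smooth is automatically biholomorphic, provided the differential has full rank; but in fact for a holomorphic bijection of complex manifolds one can cite the standard fact that it is automatically a biholomorphism (the inverse is holomorphic, by the holomorphic inverse function theorem applied using that the map is injective and the dimensions agree; one checks $\dim_\cc(\Z n\times\rr^{2n})=\tfrac12 n(n-1)+n=\dim_\cc\Z{n+1}$). Alternatively, and more cheaply, the rank computation is already flagged as routine in the proof of Theorem~\ref{Clifford} (``we leave the reader to check that the differential of this mapping has full rank''), so I would simply note that the same differential, now expressed in the holomorphic coordinates $\xi_\bullet,W_\bullet$, is an isomorphism at each point, giving a local biholomorphism; combined with bijectivity this yields the global statement. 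Finally, the description of the missing $\Z n$ is immediate: it is the locus $\phi=0$, i.e.~$\xi_0=\cdots=\xi_{\cdots n}=0$, the remaining coordinates $[0,\ldots,0,W_1,\ldots,W_{\cdots n}]$ then parametrizing the pure spinors in $\Delta_-$ that constitute the fiber $\pi^{-1}(\infty)$.

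The main obstacle is purely expository rather than mathematical: one must be scrupulous about the sign and orientation conventions — that $\sJ$ is the \emph{integrable} twistor structure with $+i$-eigenspace~\eqref{decree} — so that ``$dW$ is $(1,0)$'' really does mean holomorphic for $\sJ$ and not its conjugate; and one must confirm that the homogeneous-coordinate normalization (the trivialization of $\Lambda^{n,0}$ mentioned parenthetically after~\eqref{Delta-ext}) is chosen consistently so that the listed $\xi$'s and $W$'s are genuinely the projective coordinates on $\Z{n+1}\subset\cp^{2N-1}=\PP(\Delta_+\oplus\Delta_-)$ via~\eqref{tildeD}. Once these bookkeeping points are pinned down, every step reduces to a computation already carried out, in embryonic form, earlier in Section~\ref{holocor} and in the proof of Theorem~\ref{Clifford}.
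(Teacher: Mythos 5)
Your proposal is correct and follows essentially the same route as the paper, which states Theorem~\ref{BIHOLO} as the conclusion of the preceding development: the set-theoretic bijection and identification of the missing fiber from Theorem~\ref{Clifford}, the recognition of the $W$'s as components of $\vv\cdot\bxi$ via \eqref{Wi} and \eqref{Wdef}, and the observation that $dW_i$ is of type $(1,0)$ for $\sJ$ so that the coordinates are holomorphic, with the full-rank/inverse check left as routine exactly as you note.
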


By adding the missing twistor fiber over the point at infinity, we
obtain
\begin{corollary}
The map $\FFF$ can be extended to a biholomorphism 
\begin{align*} 
\hat\FFF: \sZ(S^{2n}) \rightarrow\Z{n+1}.
\end{align*}
\end{corollary}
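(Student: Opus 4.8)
The plan is to extend the biholomorphism $\FFF \colon \Z n \times \rr^{2n} \to \Z{n+1}\setminus\Z n$ of Theorem~\ref{BIHOLO} across the missing twistor fiber $\pi^{-1}(\infty)\cong\Z n$ by the usual inversion argument. First I would recall that $\sZ(S^{2n})$ is, as a smooth manifold, the bundle $\Z n \to \sZ(S^{2n})\to S^{2n}$ of~(\ref{twsph}), so removing the fiber over $\infty$ leaves exactly $\Z n \times \rr^{2n}$, and by Theorem~\ref{BIHOLO} this open set is biholomorphic via $\FFF$ to $\Z{n+1}\setminus\Z n$, with the complement $\Z n$ sitting inside $\Z{n+1}$ as the locus where the first $N=2^{n-1}$ homogeneous coordinates vanish. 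So set-theoretically we already have a bijection $\hat\FFF\colon \sZ(S^{2n})\to\Z{n+1}$; the only thing to check is that it is holomorphic (and biholomorphic) across the added fiber.

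The key step is to produce holomorphic coordinates on a neighborhood of a point of $\pi^{-1}(\infty)$ in $\sZ(S^{2n})$ and show $\hat\FFF$ is given there by holomorphic formulae. Concretely, the twistor projection $\Z{n+1}\setminus\Z n\to\rr^{2n}$ sends $[\bxi,\mathbf W]$ to $\mathbf v = \mathbf z$, and near $\infty$ one uses the inverted Euclidean chart on $S^{2n}$, i.e. $\mathbf z \mapsto \mathbf z/|\mathbf z|^2$; under this inversion the pure-spinor coordinates $\bxi$ over $\rr^{2n}$ transform in a way that is precisely the multiplication of $\phi$ by the Clifford action of the inversion, which is the content making (\ref{decree}) natural. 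The cleanest way to see holomorphicity is to observe that in the homogeneous coordinates $[\xi_0,\xi_{12},\ldots,W_1,\ldots]$ of $\cp^{2N-1}$, passing to the missing fiber is just moving to the affine chart where one of the $W_{i_1\cdots i_q}$ is set equal to $1$ instead of $\xi_0$; since $\Z{n+1}\subset\cp^{2N-1}$ is a smooth projective (hence complex analytic) submanifold, these affine charts give holomorphic coordinates near every point of $\Z{n+1}$, including the points $[0,\ldots,0,W_1,\ldots]$. One then checks that in the source $\sZ(S^{2n})$, the analogous charts — built from the twistor fiber $\Z n$ together with the inverted coordinate on $S^{2n}$ and its induced almost complex structure $\sJ$ — match up, so that $\hat\FFF$ and its inverse are expressed by holomorphic (indeed rational) functions. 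Finally, since $\FFF$ was already shown to be a biholomorphism on the dense open set $\Z{n+1}\setminus\Z n$ and $\hat\FFF$ is a bijective holomorphic map between compact complex manifolds of the same dimension, it is automatically biholomorphic.

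The main obstacle I expect is the bookkeeping around the added fiber: verifying that the almost complex structure $\sJ$ on $\sZ(S^{2n})$ genuinely extends $\sJ$ on $\sZ(\rr^{2n})$ across $\pi^{-1}(\infty)$ in a way compatible with the conformal (Möbius) change of chart on $S^{2n}$, and that the resulting identification of $\pi^{-1}(\infty)$ with the coordinate subspace $\{\xi_0=\xi_{12}=\cdots=0\}\subset\Z{n+1}$ is holomorphic rather than merely smooth. Once one knows that the twistor almost complex structure on the twistor space of any Riemannian manifold is integrable (cited after (\ref{decree}): \cite{AHS,Besse,deBartolomeisNannicini,ObrianRawnsley,Salamon1985}) and hence that $\sZ(S^{2n})$ is a bona fide compact complex manifold, the rest is a routine check that the rational formulae defining $\FFF$ have removable singularities along the missing fiber and define a biholomorphism there — which is why the corollary can be stated without further ado.
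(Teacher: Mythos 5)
Your proposal is correct, but it supplies an argument where the paper essentially supplies none: the corollary is stated as an immediate consequence of Theorem~\ref{BIHOLO}, the point being that the identification $\sZ(S^{2n})\cong\SO(2n+2)/\U(n+1)=\Z{n+1}$ is already known from the coset isomorphism $\SO(2n+1)/\U(n)\cong\SO(2n+2)/\U(n+1)$ quoted with \eqref{twist1}, and $\FFF$ is visibly compatible with it away from the fiber at infinity. Your route instead extends $\FFF$ by hand: conformal invariance of the twistor structure, the inverted chart at $\infty$, holomorphic affine charts on $\Z{n+1}\subset\cp^{2N-1}$ where some $W_{i_1\cdots i_q}=1$, and then the general fact that a bijective holomorphic map between equidimensional compact complex manifolds is biholomorphic (a genuine economy, since it spares you checking holomorphy of the inverse across the added fiber). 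Two remarks on the part you call a routine check, which is where all the content sits: first, it is not enough to observe that $\pi^{-1}(\infty)$ and $\{\xi_*=0\}\subset\Z{n+1}$ are both copies of $\Z n$; you must define $\hat\FFF$ on $\pi^{-1}(\infty)$ by the \emph{specific} identification that arises as the limit of $\FFF$, and verify continuity/holomorphy for that one, which is exactly what your chart-matching has to deliver. Second, the computation you need is in effect carried out later in the paper for $n=3$: Proposition~\ref{confgroup} shows that inversion lifts to the matrix \eqref{0II0} swapping $\bxi$ and $\bfW$, and $\FFF$ is equivariant under the conformal group; composing with this inversion moves the fiber at infinity to the fiber over $0$, where Theorem~\ref{BIHOLO} already gives holomorphy. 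Running that equivariance argument in general dimension is probably the cleanest way to discharge your removable-singularity step, and with it your proof closes up correctly.
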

\noindent
The map $\hat\FFF$ is then another realization of the fibration
\eqref{twist1}.\smallbreak

At this juncture, as an application, we state and prove a general
integrability result. Although this is fairly well-known, and was
proved by the second author in \cite{Salamon1985}, it is readily
formulated in the language of this section.
\begin{proposition}
Let $J$ be an orthogonal almost complex structure defined on an open
set $\Omega \subset S^{2n}$.  Then $J$ is integrable if and only if
the graph $J(\Omega)$ is a holomorphic $n$-fold in $(\pi^{-1}(
\Omega),\sJ)$.
\end{proposition}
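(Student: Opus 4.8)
The plan is to recognize this as a reformulation of the classical fact that orthogonal complex structures correspond to holomorphic sections of the twistor fibration, and to prove it via a pointwise/local comparison of complex structures using the biholomorphism $\FFF$ established in Theorem~\ref{BIHOLO}. First I would set up the correspondence: an orthogonal almost complex structure $J$ on $\Omega \subset S^{2n}$ is, by definition, a smooth section $s_J$ of the twistor fibration $\pi\colon \sZ(S^{2n}) \to S^{2n}$, namely $s_J(\mathbf{x}) = (J_{\mathbf{x}}, \mathbf{x})$; its graph $J(\Omega)$ is the image $s_J(\Omega)$, a smooth real $2n$-dimensional submanifold of $\pi^{-1}(\Omega)$. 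The goal is to show this submanifold is $\sJ$-holomorphic (i.e.\ its tangent spaces are $\sJ$-invariant, equivalently complex $n$-dimensional) if and only if the Nijenhuis tensor of $J$ vanishes.

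The key step is the pointwise computation of the differential $ds_J$ at a point $p = (J_{\mathbf{x}}, \mathbf{x})$, decomposed according to the splitting $T_p\sZ = V_p \oplus H_p$ of \eqref{VH}. Since $s_J$ is a section, the horizontal component of $ds_J(X)$ is just $X$ itself (for $X \in T_{\mathbf{x}}S^{2n} \cong H_p$), while the vertical component is $\nabla_X J$, the covariant derivative of the section, which by the identification in \eqref{L20} lives in the appropriate tensor space; crucially, $\nabla_X J$ anti-commutes with $J$. So $ds_J(X) = (\nabla_X J,\, X) \in V_p \oplus H_p$. Now I would impose that this tangent space be $\sJ$-invariant: using the prescription \eqref{decree} that the $(1,0)$-space of $\sJ$ is $V_p^{1,0} \oplus H_p^{1,0} = \Ext^2(H_p^{1,0}) \oplus H_p^{1,0}$, one checks that the image of $ds_J$ is $\sJ$-invariant precisely when, for every $(1,0)$-vector $X$ (w.r.t.\ $J_{\mathbf{x}}$), the vertical part $\nabla_X J$ lies in $\Ext^2(H_p^{1,0})$ rather than having a $\Ext^2(H_p^{0,1})$ component. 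The vanishing of that $(0,2)$-type component of $\nabla_{(1,0)} J$, interpreted as the $(0,2)$-part of $\bar\partial J$, is exactly one of the standard equivalent forms of the integrability (Newlander--Nirenberg) condition for $J$ — the Nijenhuis tensor $N_J$, as a section of $\Lambda^{0,2}\otimes T^{1,0}$, is (up to a universal nonzero constant) the $(0,2)$-component of $\nabla J$.

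The main obstacle is making the identification in the previous step clean and coordinate-free, i.e.\ carefully matching the $\U(n)$-equivariant decomposition $V_p \otimes \cc = \Ext^2(H_p^{1,0}) \oplus \Ext^2(H_p^{0,1})$ against the components of $\nabla J$, and verifying that the vanishing condition that pops out is genuinely $N_J = 0$ and not some weaker or stronger condition. The cleanest route is probably to avoid curvature terms entirely by working locally over $\rr^{2n}$: invoke the biholomorphism $\FFF\colon \Z n \times \rr^{2n} \to \Z{n+1}\setminus \Z n$ of Theorem~\ref{BIHOLO}, so that the twistor complex structure is literally the product-type $\sJ$ whose holomorphic functions include the $W_i$ of \eqref{Wi}. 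Then $J$ is integrable iff the pullbacks $s_J^*W_i$ and $s_J^*W_{i_1\cdots i_q}$ (which become functions of $\mathbf{x}$ after substituting $\xi_\bullet = \xi_\bullet(J_{\mathbf{x}})$) are holomorphic with respect to $J$ — but $W_i$ contracted with a tangent vector is $\eta_i$, the defining $(1,0)$-form of $J$, so $s_J^* dW_i$ being of type $(1,0)$ w.r.t.\ $J$ amounts to $d\eta_i \equiv 0 \bmod \eta_1,\dots,\eta_n$ up to $(1,0)$-terms, which is the Frobenius/Newlander--Nirenberg integrability of the distribution $T^{0,1}$. Conversely, if the graph is holomorphic, restricting the holomorphic coordinate functions $W_\bullet$ to it gives holomorphic functions on $(\Omega, J)$ whose differentials span $\Lambda^{1,0}_J$, forcing integrability. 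I would state this as the argument, flagging that the only real content is the elementary observation that $W_i|_{\text{graph}}$ recovers $\eta_i$, together with Theorem~\ref{BIHOLO}.
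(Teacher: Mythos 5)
Your starting point is the same as the paper's (restrict to the chart $\xi_0=1$, use the holomorphic coordinates $(\xi_{ij},W_s)$ supplied by Theorem~\ref{BIHOLO}), and the direction ``graph holomorphic $\Rightarrow$ $J$ integrable'' is essentially fine as you sketch it, modulo a small point: you should justify that the restricted differentials are independent (e.g.\ translate the point in question to the origin, or simply note that $\pi$ restricted to a holomorphic graph intertwines the induced complex structure with $J$). The genuine gap is in the other direction, and it sits exactly where you wave your hands. On the graph one has $s_J^*dW_i=\eta_i-\sum_m\bz^m\,d\xi_{im}$, so ``$s_J^*W_i$ is $J$-holomorphic'' is a condition on the $(0,1)$-part of $\sum_m\bz^m\,d\xi_{im}$; it is \emph{not} the condition $d\eta_i\equiv0 \bmod (\eta_1,\dots,\eta_n)$, as you assert, and as a pointwise condition it even degenerates (at the origin it is vacuous, whatever $J$ does). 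Moreover, holomorphicity of the graph means that \emph{all} ambient holomorphic coordinates pull back to $J$-holomorphic functions, in particular the fibre coordinates $\xi_{ij}$, i.e.\ that each $\xi_{ij}$ is annihilated by the $(0,1)$ vector fields \eqref{vdel}; this is the condition $\Xi_{ijs}=0$ of \eqref{cond}, and it is a priori \emph{stronger} than integrability, which only demands the vanishing of the $(0,2)$-part of each $d\eta_i$, i.e.\ of a symmetrized piece of $\Xi$.

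The whole content of the paper's proof is precisely to close that gap: computing $d\eta_i(v_r,v_s)$ shows integrability is equivalent to the symmetry $\Xi_{ijk}=\Xi_{ikj}$, and this symmetry combined with the skew-symmetry of $\Xi_{ijk}$ in $i,j$ forces $\Xi_{ijk}=0$ by the same index juggling as in the uniqueness of the Levi-Civita connection; $\Xi\equiv0$ is then exactly holomorphic dependence of the $\xi_{ij}$ on the $W_s$, i.e.\ holomorphicity of the graph. Your proposal contains no substitute for this step: the claimed equivalence ``$J$ integrable iff the pullbacks of the $W$'s are holomorphic'' is neither proved nor correct as a characterization of the graph being holomorphic (it omits the $\xi_{ij}$ and, by itself, is a weaker system of equations), and your opening paragraph identifying the obstruction with the $(0,2)$-part of $\nabla J$ and with $N_J$ states the needed fact but, as you yourself note, does not prove it. So the key implication ``$J$ integrable $\Rightarrow$ graph holomorphic'' is missing from the proposal.
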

\begin{proof}
  The invariant nature of the statement of the proposition makes it
  sufficient for us to prove it locally. We may therefore assume that
  the space of $(1,0)$-forms for $J$ is generated by the 1-forms
  $\eta_i$ defined in (\ref{etai}) with $\xi_0=1$. In this way, $J$ is
  entirely determined by a skew-symmetric matrix $(\xi_{ij})$ of
  smooth functions on $\Omega$.

The graph of $J$ will be holomorphic if and only if the $\xi_{ij}$
depend holomorphically on the $W_s$-s, i.e.
\begin{align*}
0 = \frac{\pd\xi_{ij}}{\pd\bW_{\!s}} 
= \suml_l\displaystyle\frac{\pd z^l}{\pd\bW_{\!s}}\xi_{ij,l}
+ \suml_m\displaystyle\frac{\pd\bz^m}{\pd\bW_{\!s}}\xi_{ij,\bar m},
\qquad s=1,\ldots,n.
\end{align*}
We can use (\ref{Wi}) to find the Jacobian matrix
\begin{align*}
\frac{\pd(W_r,\bW_{\!s})}{\pd(z^l,\bz^m)} = 
\left(\begin{matrix}
I & -(\xi_{rm}) \\
-(\xi_{sl}) & I  \\
\end{matrix}
\right).
\end{align*} 
Inverting this produces (up to a determinant) the ``same'' matrix with
no minus signs. Thus $\pd z^l/\pd\bW_{\!s}=\xi_{sl}$ and the condition
is $\Xi_{ijk}=0$ where
\begin{align}
\label{cond}
\Xi_{ijs}=\xi_{ij,\bar s}+\suml_l \xi_{ij,l}\xi_{sl},
\end{align}
and the commas indicate ``Euclidean'' partial differentiation.

We next compute
\begin{align*}
d \eta_i = - (\xi_{ij,l} dz^l +\xi_{ij, \bar{l}} d\bz^l)\wedge d\bz^j.
\end{align*}
For integrability, we need $d\eta_i(v_r,v_s)=0$ for all $i,r,s$, where
the vectors
\begin{align}
\label{vdel}
v_r = \od_r - \suml_{k=1}^n \xi_{rk}\pd_k, 
\end{align}
span $T^{0,1}$, as in (\ref{vvv}). A computation shows that 
integrability of $J$ is then equivalent to the condition that
\begin{align*}
\Xi_{ijk}=\Xi_{ikj}.
\end{align*}
But since $\Xi_{ijk}$ is skew-symmetric $i,j$, we obtain 
\begin{align*}
\Xi_{ijk} = - \Xi_{jik} = - \Xi_{jki} = \Xi_{kji} = \Xi_{kij} =
-\Xi_{ikj} = -\Xi_{ijk},
\end{align*}
which implies that $\Xi_{ijk}=0$. (The parallel to the proof of the
fundamental theorem of Riemannian geometry arises from the fact that
(\ref{vdel}) can be viewed as a covariant derivative of the form
$\nabla_{\overline i}$.)  This completes the proof.\end{proof}

\begin{remark}{\em
  A direct corollary is the non-existence of a global OCS on $S^6$;
  see \cite{LeBrunS6}. Our work does not shed further light on the
  question of whether $S^6$ admits a complex structure, although it is
  conceivable that generalizations of the twistor approach might be
  relevant to this problem. For some intriguing properties of a
  hypothetical complex structure on $S^6$, we refer the reader to
  \cite{HKP}.}
\end{remark}

\section{Warped product structures}
\label{warped}

Consider $\rr^{2n}$ with coordinates $(z^1,\dots, z^n)$, and consider a
smooth orthogonal almost complex structure of the form
\begin{align}
\label{wpo}
J = J_1 \oplus J_0,
\end{align}
where $J_1=J_1(z^1, \bz^1, \dots z^n, \bz^n)$ is an OCS (and so an
\emph{integrable} complex structure) defined on
\begin{align*}
\rr^{2n-2} = \{ z^n = \const\},
\end{align*}
and $J_0$ is the standard OCS on the complementary $\rr^2$ spanned by
the real and imaginary parts $x^n$ and $y^n$ of $z^n$. If $J$ is
itself integrable, we shall call it a \emph{\wp\ orthogonal complex
  structure}. Because there is only one OCS on $\rr^2$ up to sign, an
equivalent way of saying this is the following:

\begin{definition}
\label{wpdef}
{\em    A \emph{\wp\ OCS} on $\rr^{2n}$ is an orthogonal complex structure
  $J$ on $\rr^{2n}$ preserving an orthogonal decomposition
  $\rr^{2n}=\rr^{2n-2}\oplus\rr^2$ pointwise.}
\end{definition}

\noindent We shall tacitly assume that $J_0,J_1$ (and so $J$) are
consistent with fixed orientations of the respective Euclidean spaces.
Note that a \wp\ OCS is constant in dimension 4 since there is only
one oriented OCS on $\rr^2$.\smallbreak

Given a \wp\ OCS $J$, the orthogonal projection
\begin{align*}
\pi\colon(\rr^{2n},J)\to(\rr^2,J_0)
\end{align*}
is a \emph{holomorphic} mapping between Hermitian manifolds since
$\pi_*\circ J=J_0\circ\pi_*$. In particular, $J_1$ defines an
integrable complex structure on each fibre
$\pi^{-1}(v)\cong\rr^{2n-2}$ with $v\in\rr^2$. In this way, $\pi$
determines a complex analytic \emph{deformation} of
$(\pi^{-1}(0),J_1)$ in the orthogonal category. If we want the base
parametrizing the deformations to be a Euclidean space
$\rr^{2m}\cong\cc^m$ with a constant OCS, there is no restriction in
assuming that $m=1$ as we do, since the general case falls within the
scope of Definition~\ref{wpdef}.

\begin{proposition}
\label{wpint}
Let $J$ be an almost complex structure of the form \eqref{wpo} with
each $J_1$ an OCS. Then $J$ is integrable if and only if the mapping
\begin{align}
\label{mapuv}
(\rr^2,J_0)\to\Z{n-1},\qquad v\mapsto J_1(u,v)
\end{align}
is holomorphic for each fixed $u\in\rr^{2n-2}$. 
\end{proposition}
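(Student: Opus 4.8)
The plan is to reduce the integrability of the almost complex structure $J=J_1\oplus J_0$ of the form \eqref{wpo} to the Cauchy--Riemann-type equations \eqref{cond} already derived in Section~\ref{integ}, and then read off from those equations exactly what it means for $J$ to be integrable, namely that the $\xi_{ij}$ be holomorphic in $z^n$ alone. First I would set up local coordinates: since $J_1$ is an OCS on $\rr^{2n-2}=\{z^n=\const\}$, we may, away from a bad locus and after applying one of the coordinate permutations of Subsection~\ref{skewsym}, assume that the $(1,0)$-forms of $J_1$ are spanned by the $\eta_i$ of \eqref{etai} with $\xi_0=1$ and $i=1,\dots,n-1$, so that $J_1$ is recorded by a skew-symmetric matrix $(\xi_{ij})_{1\le i,j\le n-1}$ of smooth functions on $\rr^{2n}$. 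The hypothesis $J=J_1\oplus J_0$ with $J_0$ standard on the $z^n$-plane means precisely that $\xi_{in}=0$ for all $i<n$ (equivalently, $dz^n$ is a $(1,0)$-form for $J$). So $J$ is encoded by the same matrix $(\xi_{ij})$, now viewed as depending on all of $z^1,\bz^1,\dots,z^n,\bz^n$, with the $n$-th row and column zero.

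Next I would apply the integrability criterion of the last Proposition of Section~\ref{integ}: $J$ is integrable if and only if $\Xi_{ijs}=0$ for all $i,j,s$, where $\Xi_{ijs}=\xi_{ij,\bar s}+\sum_l \xi_{ij,l}\xi_{sl}$. We split into cases according to the value of $s$. For $1\le s\le n-1$, since $\xi_{in}=0$ the sum $\sum_l \xi_{ij,l}\xi_{sl}$ runs effectively only over $l\le n-1$, and $\Xi_{ijs}=0$ for these $s$ is exactly the statement that, for each \emph{fixed} value of $(x^n,y^n)$, the matrix $(\xi_{ij})$ satisfies the integrability equations of an OCS on $\rr^{2n-2}$ --- which holds by hypothesis, since $J_1(\cdot,v)$ is assumed to be an OCS for every $v$. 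So the only content of integrability of $J$ is the remaining case $s=n$: here $\xi_{nl}=0$ for all $l$, so $\Xi_{ijn}=\xi_{ij,\bar n}$, and the vanishing of all these is precisely the condition
\begin{align*}
\frac{\pd \xi_{ij}}{\pd\bz^n}=0\qquad\text{for all }i,j,
\end{align*}
i.e.\ that the $\xi_{ij}$ are holomorphic in the single variable $z^n$ (with $u=(z^1,\dots,z^{n-1})$ held fixed). Finally I would translate this back: under the identification $f:\Y_{n-1}^+\xrightarrow{\ \sim\ }\Z{n-1}$ of Section~\ref{holocor}, the affine chart $\xi_0=1$ has holomorphic coordinates given exactly by the entries $\xi_{ij}$ (for $n-1=2,3$ these are literally all the coordinates; in general they are the coordinates on $\{\xi_0\ne0\}$, and the remaining $\xi$-components are holomorphic functions of them by Remark~\ref{exp}). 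Hence the map $v\mapsto J_1(u,v)$ into $\Z{n-1}$ is holomorphic if and only if each $\xi_{ij}(u,\cdot)$ is a holomorphic function of $z^n$, which is exactly the condition just obtained. This gives the equivalence.

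The one point requiring a little care --- and the main (mild) obstacle --- is the passage to the local normal form: a priori $J_1$ need not have $\xi_0\ne0$ everywhere, so one must either invoke the permutation propositions of Subsection~\ref{skewsym} to cover $\Z{n-1}$ by the analogous affine charts and check that the argument is chart-independent (which it is, since integrability of an almost complex structure and holomorphicity of a map are both local and coordinate-free), or simply observe that the set where some admissible $\xi_0$-type coordinate is nonzero is open and dense and argue by continuity. Either way, no genuine computation beyond \eqref{cond} is needed; everything else is bookkeeping with the zero $n$-th row and column, together with the already-established dictionary between pure spinors in the chart $\xi_0=1$ and points of $\Z{n-1}$.
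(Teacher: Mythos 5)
Your argument is correct and is essentially the paper's own proof: both encode $J$ by a pure spinor with all index-$n$ components zero (Proposition~\ref{gendim}), work in the chart $\xi_0=1$, and read off from the integrability equations \eqref{cond} that the $s=n$ equations reduce to $\xi_{ij,\bar n}=0$ (holomorphicity of \eqref{mapuv}) while the $s<n$ equations are just integrability of $J_1$ on each slice, which is assumed. Your closing remark on handling the locus $\xi_0=0$ via the permutation charts or density is a reasonable extra precaution on a point the paper passes over silently, but it does not change the route.
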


\begin{proof}
  The value of $J$ at each point of $\rr^{2n}$ corresponds to a pure
  even spinor as in (\ref{bxi}). We can ensure that this has the form
  (\ref{wpo}) by requiring that all components that contain an index
  $n$ vanish, as stated in Proposition~\ref{gendim}. There remain
  $2^{n-1}$ (potentially non-zero) components
  $\xi_{ij}$, which rightly define an OCS on
  $\rr^{2n-2}$. Let us assume that $\xi_0\ne0$, and then scale so that
  $\xi_0\equiv1$. Setting $s=n$ in the integrability equations
  (\ref{cond}), we have
\begin{align*}
\xi_{ij,\bar n} + \suml_l\xi_{ij,l}\xi_{nl} = 0.
\end{align*}
Thus $\xi_{ij,\bar n}=0$, which says that $\xi_{ij}$ is holomorphic in
the $z^n$ coordinate, which amounts to the holomorphicity of
\eqref{mapuv} for fixed $u$. The remaining integrability equations
just say that for each $z^n$ fixed, $J_1$ is integrable as an OCS in
$\rr^{2n-2}$, which is what we are in any case assuming.
\end{proof}

The importance of the preceding proposition is that it enables one to
construct warped product structures with relative ease. The simplest
way of doing this is to choose a holomorphic function
$f\colon\cc\to\Z{n-1}$ and then define $J_1(z^n)$ to be the constant
OCS on $\pi^{-1}(z)=\rr^{2n-2}$ corresponding to $f(z^n)$. We shall do
this in some examples below, but first we place our definitions in a
more general context.\smallbreak

Let $(M,J,g)$ be a Hermitian manifold of real dimension $2n$. This
means that $J$ is a complex structure defining a transformation at
each point that is orthogonal relative to the Riemannian metric $g$,
and there is an associated 2-form $\omega$ by
\begin{align*}
\omega(X,Y)=g(JX,Y).
\end{align*}
The Hermitian manifold is \emph{locally conformally K\"ahler} if there
exists a positive function $\lambda$ in a neighborhood of each point
such that $d(\lambda\omega)=0$. A complementary condition is that $J$
be \emph{cosymplectic}, meaning that
\begin{align}
\label{*omega}
*\kern1pt\omega=\frac1{n!}\omega^{n-1}
\end{align}
is closed, $*$ being the Hodge operator. This concept is only useful
if $n\ge3$ since ``cosymplectic'' is equivalent to ``K\"ahler'' on a
Hermitian manifold of real dimension 4. If general, if $M$ is both
locally conformally K\"ahler and cosymplectic then
\begin{align}
\label{lckco}
0= d(\lambda\omega)\wedge\omega^{n-2}=d\lambda\wedge\omega^{n-1},
\end{align}
so $\lambda$ is constant and $M$ is K\"ahler.

These properties are easily assessed for the structures of
Definition~\ref{wpdef}. Take $M$ to be $\rr^{2n}$ with the Euclidean
metric $g=\geu$ and let $J$ be a \wp\ orthogonal complex structure.
The K\"ahler condition $d\omega=0$ implies that $\nabla J=0$ where
$\nabla$ denotes the Levi~Civita connection for $\geu$. Thus,
$\nabla_XJ=0$ for all $X$, where $\nabla_X$ is the Euclidean
directional derivative, and any K\"ahler OCS is necessarily
\emph{constant} on $\rr^{2n}$. Modulo orientation, such a $J$ defines
a \emph{horizontal} section of the twistor space $\sZ$ in \eqref{sZ},
and is effectively an element of $\Z n$. The next result shows that,
of the weaker conditions considered above, the cosymplectic one is
more relevant to the warped product situation.

\begin{proposition}\label{cosymp}
  Let $J$ be a \wp\ OCS on $\rr^{2n}$ with $n \ge 3$.
\begin{enumerate}
\item[(i)] $J$ is locally conformally K\"ahler if and only if it is
  constant on $\rr^{2n}$.
\item[(ii)] $J$ is cosymplectic if and only if $J_1$ is cosymplectic
  on each $\rr^{2n-2}$.
\end{enumerate}
\end{proposition}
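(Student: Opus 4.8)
The plan is to work directly with the $2$-form $\omega$ of the warped product structure $J = J_1 \oplus J_0$ and its Hodge dual. First I would record the basic decomposition: writing $\omega_1$ for the (pointwise-varying) K\"ahler form of $J_1$ on the $\rr^{2n-2}$-factor and $\omega_0 = dx^n \wedge dy^n$ for the standard form on the $\rr^2$-factor, the orthogonal splitting of Definition~\ref{wpdef} gives $\omega = \omega_1 + \omega_0$ globally on $\rr^{2n}$. Since $\omega_0$ is a fixed closed form involving only $dx^n, dy^n$ and $\omega_1$ involves only the $dz^i, d\bz^i$ for $i \le n-1$, the wedge powers split cleanly: $\omega^{n-1} = \omega_1^{n-1} + (n-1)\,\omega_1^{n-2}\wedge\omega_0$, because $\omega_0^2 = 0$ and $\omega_1$ has rank at most $2n-2$.

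For part (ii), the relevant object is $*\omega = \tfrac{1}{n!}\omega^{n-1}$ (up to the normalization in~(\ref{*omega})), so I would compute $d(*\omega)$ using the split above. The term $d(\omega_1^{n-1})$ is a form of degree $2n-1$ built purely from the $\rr^{2n-2}$-directions and the $z^n, \bz^n$ dependence; the term $d\big(\omega_1^{n-2}\wedge\omega_0\big) = (d\omega_1^{n-2})\wedge\omega_0$ since $\omega_0$ is closed and has constant coefficients. The key point is that $\omega_1^{n-1}$ is the top-degree form on the slices $\{z^n = \const\}$, i.e.\ $*_1\, 1$ up to a factor, so as a form on $\rr^{2n}$ one has $\omega_1^{n-1} = (n-1)!\,\mathrm{(vol\ form\ of\ the\ slice)}$, a fixed $(2n-2)$-form $dx^1\wedge dy^1 \wedge\cdots\wedge dx^{n-1}\wedge dy^{n-1}$ which is closed; hence $d(\omega_1^{n-1}) = 0$ automatically. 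Therefore $d(*\omega)$ reduces, up to a nonzero constant, to $(d_{(2n-2)}\omega_1^{n-2})\wedge\omega_0$ where $d_{(2n-2)}$ is exterior differentiation in the slice directions only — plus a possible piece with a $d z^n$ or $d\bz^n$ — and by counting which basis $1$-forms can appear (the $\omega_0$ factor already uses $dx^n \wedge dy^n$), the only surviving contribution is $(d_{(2n-2)}\omega_1^{n-2})\wedge\omega_0$. This vanishes if and only if $\omega_1^{n-2}$ is closed as a family of forms on each slice, which is exactly the statement that $J_1$ is cosymplectic on each $\rr^{2n-2}$, for each fixed value of $z^n$. I expect the bookkeeping of which terms die because of degree/index reasons (in particular that the derivatives of $\omega_1$ in the $z^n, \bz^n$ directions wedge to zero against $\omega_0$ and against $\omega_1^{n-2}$ in top slice-degree) to be the main thing to get right; it is a finite check but one must be careful that no ``cross'' term survives.

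For part (i), I would argue as follows. If $J$ is constant then $\nabla J = 0$, so $d\omega = 0$ and $J$ is K\"ahler, hence trivially locally conformally K\"ahler; this is the easy direction. Conversely, suppose $J$ is locally conformally K\"ahler, so near each point $d(\lambda\omega) = 0$ for some positive $\lambda$. Since $n \ge 3$, the Lee form $\theta$ defined by $d\omega = \theta\wedge\omega$ is then (locally) exact, $\theta = -d\log\lambda$, and in particular $d\theta = 0$. The plan is to combine this with the warped product form: expanding $d\omega = d\omega_1$ (as $\omega_0$ is closed) and using $d\omega_1 = \theta\wedge(\omega_1+\omega_0)$, compare the components that do and do not involve $dx^n, dy^n$. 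The piece not involving the $z^n$-cotangent directions forces $\theta$ restricted to the slices to be the Lee form of $J_1$ on each slice, while the piece involving $dx^n$ or $dy^n$ controls the $z^n$-dependence of $\omega_1$. The hard part will be showing that local conformal K\"ahlerness, together with the rigidity of having a flat base $\rr^2$ with the \emph{standard} $J_0$, forces $\theta$ to vanish and $\omega_1$ to be independent of $z^n$ — i.e.\ that one cannot trade a nonconstant $J_1(z^n)$ against a conformal factor. I would try to extract this by testing $d(\lambda\omega) = 0$ against the vector fields $\pd/\pd z^n, \pd/\pd\bz^n$ and using that the slice directions already carry a fixed flat metric: any nontrivial holomorphic variation of $J_1$ in $z^n$ (as in Proposition~\ref{wpint}) produces a $d\omega_1$ with a $dz^n$- or $d\bz^n$-component that cannot be absorbed into $d\lambda\wedge\omega$ for any scalar $\lambda$, because $\lambda\omega$ would have to be closed while $\omega_1$ genuinely moves. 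Once $\theta \equiv 0$ one concludes $d\omega = 0$, hence $\nabla J = 0$ and $J$ is constant, as in the discussion preceding the proposition. I anticipate that this uniqueness/rigidity step in (i) — ruling out a compensating conformal factor — is the genuine obstacle, whereas (ii) is essentially a direct computation with wedge powers.
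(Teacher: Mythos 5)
Your part (ii) is correct and is essentially the paper's own computation: write $\omega=\omega_1+\omega_0$, note $\omega_1^{n-1}$ is a constant multiple of the (fixed, flat) slice volume form and hence closed, and observe that after wedging with $\omega_0$ only the slice-direction derivative of $\omega_1^{n-2}$ survives, so $d(*\omega)=0$ is equivalent to $J_1$ being cosymplectic on each slice.

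Part (i), however, has a genuine gap, and it is exactly the step you flag yourself as ``the genuine obstacle'': you never show that a nonconstant conformal factor cannot compensate a $z^n$-dependent $J_1$, you only assert that the $dz^n$-, $d\bz^n$-components of $d\omega_1$ ``cannot be absorbed into $d\lambda\wedge\omega$.'' As stated this is not an argument, and the whole content of the converse direction lies there. The paper closes this in three short moves that your sketch does not reach: (a) since $H^1(\rr^{2n})=0$ one may take $d(\lambda\omega)=0$ globally; the only term of $d(\lambda\omega)$ divisible by $dx^n\wedge dy^n$ is $d\lambda\wedge\omega_0$, so $d\lambda\wedge\omega_0=0$ and $\lambda=\lambda(x^n,y^n)$ is constant on each slice; (b) the purely slice-direction component of $d(\lambda\omega)=0$ then gives $\lambda\,d_{\mathrm{slice}}\omega_1=0$, so $J_1$ is K\"ahler, hence \emph{constant}, on each $\rr^{2n-2}$ (using that a K\"ahler OCS on flat space is parallel); (c) by part (ii) $J$ is now cosymplectic, and the identity \eqref{lckco}, $0=d(\lambda\omega)\wedge\omega^{n-2}=d\lambda\wedge\omega^{n-1}$, forces $\lambda$ to be constant, hence $d\omega=0$ and $J$ is constant on all of $\rr^{2n}$. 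Note how part (ii) is actually an ingredient in part (i); your plan treats them as independent. If you prefer to stay with your Lee-form formulation, the missing rigidity can be supplied by an explicit computation: the $dx^n\wedge dy^n$-component of $d\omega=\theta\wedge\omega$ forces the slice part of $\theta$ to vanish, the slice component gives $d_{\mathrm{slice}}\omega_1=0$ so $\omega_1$ is constant on slices, and the remaining component reads $dx^n\wedge(\partial_{x^n}\omega_1-a\,\omega_1)+dy^n\wedge(\partial_{y^n}\omega_1-b\,\omega_1)=0$ with $\theta=a\,dx^n+b\,dy^n$; since $\omega_1$ has fixed pointwise norm (it is the K\"ahler form of an \emph{orthogonal} structure for the fixed flat metric), the scaling equations $\partial_{x^n}\omega_1=a\,\omega_1$, $\partial_{y^n}\omega_1=b\,\omega_1$ force $a=b=0$ and $\partial_{z^n}\omega_1=0$. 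Some such argument must be written out; without it the converse in (i) is not proved.
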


\begin{proof} We first prove (ii). Write
$\omega = \omega_1 + \omega_0$, where $\omega_0=dx^n\wedge dy^n$ is
  closed, so
\begin{align}
\omega^{n-1}= \omega_1^{n-1} + (n-1)\omega_0\wedge\omega_1^{n-2}.
\end{align}
The first term on the right is a volume form on $\rr^{2n-2}$, and so
globally constant. Equation \eqref{*omega} implies that
\begin{align}
n!\,d(*\omega) = 
(n-1)(n-2)\omega_0\wedge(d\omega_1\wedge\omega_1^{n-3}).
\end{align}
The vanishing of the right-hand side is equivalent to asserting that
$J_1$ is cosymplectic.

In (i) the ``if'' statement is trivial. So suppose that $J$ is locally
conformally K\"ahler. Since $H^1(\rr^{2n})=0$, the qualification
``locally'' can be dropped and we may suppose that
$d(\lambda\omega)=0$, in which the conformal factor $\lambda$ is
defined globally. It follows that $d\lambda\wedge\omega_0=0$ since
this is the only term in the exterior derivative of $\lambda\omega$
divisible by $dx^n\wedge dy^n$. Hence $\lambda=\lambda(x^n,y^n)$ is
constant on each $\rr^{2n-2}$. It follows that $J_1$ is K\"ahler and
constant on each $\rr^{2n-2}$. By (ii), $J$ is cosymplectic and from
the argument \eqref{lckco}, we conclude that $J$ is also K\"ahler and
constant on $\rr^{2n}$.
\end{proof}

\begin{remark}
{\em
When $n=2$, both statements are trivial since, as previously
remarked, a \wp\ OCS must be constant in this case. When $n=3$, $J$ is
always cosymplectic since the 4-dimensional Liouville theorem in
\cite{SalamonViaclovsky} implies that $J_1$ must be constant.  Part
(ii) is motivated by an example in \cite{BairdWood}.

Part (i) is also a consequence of a more general but well-known
result: a conformally flat K\"ahler metric in dimension $2n\ge6$ is
flat \cite[Theorem 4.1]{YanoMogi}, \cite[Proposition 2.68]{Besse}. 
This implies that a conformally K\"ahler $J$ will necessarily be K\"ahler 
relative to $\geu$, and therefore constant by the discussion preceding 
Proposition~\ref{cosymp}.  
By contrast, there \emph{do} exist non-flat conformally flat K\"ahler
metrics in dimension 4 including, for example, a complete product one
on $S^2\times H^2\cong\rr^4\setminus\rr$ that plays a key role in the
classification of OCSes in dimension 4 \cite{Pontecorvo, SalamonViaclovsky}.}
\end{remark}

We next look at some non-trivial examples of warped product
structures. In dimension $6$, a \wps is obtained from the spinor
\begin{align}
\label{WPS6}
\phi=[\xi_0, \xi_{12}, \xi_{13}, \xi_{23}]=[\xi_0, \xi_{12}, 0, 0].
\end{align}
This gives an OCS on $\rr^4=\{z^3=\const\}$ and, as remarked above,
this must be a \emph{constant} orthogonal complex structure. Assuming
$\xi_0\ne0$, Proposition~\ref{wpint} tells us that a \wp\ OCS arises
from
\begin{align}
\label{phie}
\phi = [1,\xi_{12}(z^3),\,0,\,0],
\end{align}
with $\xi_{12}(z^3)$ a \emph{holomorphic} function of $z^3$.

\begin{remark}
\label{4dt}
{\em If we simply take $\xi_{12}(z^3)=z^3$ in \eqref{phie}, then we
  can identify
\begin{align*}
\rr^6 = \rr^4 \times \>\cc\> \subset \rr^4\times\cp^1=\sZ
\end{align*}
as a complex submanifold of the twistor space of $\rr^4$ (see
\eqref{sZ}). In this case, by rescaling vertically, \eqref{wpo}
extends to a Hermitian structure on $\sZ$, the fiber $\cp^1$ being a
conformal compactification of $\cc$. If we also rescale the resulting
metric horizontally, it extends further to the standard Hermitian
structure of $\Z3=\cp^3$.  

Combining a similar argument with
Theorem~\ref{BIHOLO} in higher dimensions, we see that the choice of a
rational curve $\cp^1\subset\Z{n-1}$ (and a marked point to remove)
exhibits a \wp\ $(\rr^{2n},J)$ as a complex submanifold of $\Z n$.
The \wp\ construction can be viewed as a generalization of these
examples.}
\end{remark}

In dimension $8$, a pure spinor class
\begin{align*}
\phi=[ \xi_0, \xi_{12}, \xi_{13}, \xi_{14}, \xi_{23}, \xi_{24}, 
\xi_{34}, \xi_{1234}],
\end{align*}
must lie on the quadric (\ref{1324}). A \wp\ arises from
\begin{align*}
\phi=[ \xi_0, \xi_{12}, \xi_{13}, 0, \xi_{23}, 0, 0, 0]
\end{align*}
Restricted to a hyperplane $\{z^4 = \const\}$, this will give an OCS
on $\rr^6$, with each component depending holomorphically on the
coordinate $z^4$.

A special case will occur when the induced OCSes on the hyperplanes
$z^4=\const$ are \wp s in the same direction, that is
$\xi_{13}=\xi_{23} =0$, and $\xi_{12}=\xi_{12}(z^3)$. This will
look like
\begin{align*}
[ \xi_0(z^3, z^4), \xi_{12}(z^3, z^4), 0, 0, 0, 0, 0, 0].
\end{align*}
Writing $\rr^8 = \rr^4 \oplus \rr^4$,
these are \wp s of the form 
\begin{align*}
J = J_1\oplus\widetilde{J_0}, 
\end{align*}
where $J_1$ is a constant OCS on each 4-dimensional hyperplane for
which both $z^3,z^4$ are constant and $\widetilde{J_0}$ is the
standard OCS on each complementary $\rr^4$. By
Proposition~\ref{wpint}, it now suffices to take the map
$(\rr^4,\widetilde{J_0})\to\cp^1$ determined by $J_1$ to be
holomorphic.

\begin{question}
If $J$ is an finite energy OCS on $\rr^{2n}$, then is $\pm J$
conformally equivalent to a \wp\ OCS of the form {\rm(\ref{wpo})}?
\end{question}
In dimension $4$, such an OCS is constant
\cite{WoodIJM,SalamonViaclovsky}, and no finite energy assumption is
necessary.  The main result of this paper is that the answer is
\emph{yes} in dimension $6$.  There does not seem to be any other
obvious way to manufacture examples in higher dimensions other than
the \wp\ construction, which would lead one to conjecture
the answer might be \emph{yes} in general. However, the twistor spaces
become increasingly more complicated as the dimension grows, giving
more room for the possibility of complicated subvarieties which could
be graphs over $\rr^{2n}$.

For example, if $J_{2n}$ is an OCS defined globally on $\rr^{2n}$,
then the graph of $J_{2n}$ lies in $\Z{n+1}\setminus\Z n$, where the
$\Z n$ is the twistor fiber over the point at infinity.  The closure
will add some subvariety of $\Z n$ of complex dimension $n-1$. But
since $\Z n$ is the twistor space of $\rr^{2n-2}$, this will
correspond to some OCS $J_{2n-2}$ on some subset of $\rr^{2n-2}$. It
can happen that $J_{2n}$ is a \wp\ involving a deformation of
$J_{2n-2}$, but it is possible that there are examples in higher
dimensions where this fails.
\subsection{Warped product structures on tori}
\label{warptori}
In this subsection, we discuss the construction of the examples in
Theorem~\ref{torusex}. 

As mentioned in the Introduction, if $J$ is an OCS on a flat $4$-torus
$(T^4,g_4)$, then $J$ lifts to a constant OCS on $(\rr^4,\geu)$, where
$\geu$ is the Euclidean metric.  Consequently, the OCSes on
$(T^4,g_4)$ compatible with a fixed orientation are parametrized by
$\Z 2 = \cp^1$. Take an elliptic curve $(T^2,J_2)$ with a compatible
flat metric $g_2$, and consider the product torus $(T^6, g_6) = (T^4
\times T^2, g_4\oplus g_2)$.

We endow $T_6$ with a \wp\ OCS
\begin{align*}
J_6 = J_4\oplus J_2,
\end{align*}
where $J_4$ is determined by a holomorphic map $f: (T^2,
J_2) \rightarrow \Z2\cong\cp^1$, which is the same as a meromorphic
function on $\cc$ invariant under the corresponding lattice.  Such
functions are exactly quotients of translated $\sigma$-functions, see
\cite[Chapter 7]{Ahlfors}, and are non-algebraic if not constant.
Thus, if non-constant, these structures must have infinite energy when
lifted to $\rr^6$, since Bishop's Theorem says that finite energy
implies algebraic, see Section \ref{finalsec}.

In dimension $8$, we can perform the following construction. 
We can consider $(T^8, g_8) = (T^6 \times T^2, g_6 \oplus g_2)$, 
and endow this with a \wps 
\begin{align*}
J_8 = J_6\oplus J_2.
\end{align*}
If we take $J_6$ to be a constant OCS on $T^6$, then it is determined
by a holomorphic mapping $( T^2,J_2) \rightarrow\Z3\cong\cp^3$.
However, this need not be so; for example, $J_6$ could itself be a
\wp\ OCS on $T^6$. In this case, Proposition~\ref{wpint} tells us that
$J_8$ arises from a holomorphic mapping from $(T^2,J_2)$ into the
space of meromorphic functions of fixed degree on another elliptic
curve.

In a similar fashion, one can construct increasingly complicated \wp\
structures on tori in all higher even dimensions.

\section{Asymptotically constant structures}
\label{finding}
In this section we prove Theorem \ref{t3}. We first make some remarks
in the case of dimension $6$, and then give the general argument. 

Let $J$ be an OCS defined globally on $\rr^6$ with the correct
orientation. Then $J(\rr^6)$ is a smooth variety inside $\Z4 =
\Q^6$. Using the notation of Example~\ref{6dim}, the graph $J(\rr^6)$
is given by the expression (\ref{xiW}). Its closure is found by taking
all limits of sequences $(z^1_j, z^2_j, z^3_j)$, where at least one of
the sequences $z^i_j$ approaches infinity as $j\to\infty$.  This will
of course only add points of the form
\begin{align*}
[0,0,0,0, W_1, W_2, W_3, W_{123}], 
\end{align*}
which are points in $\cp^3_{\infty}=\pi^{-1}(\infty)$.

We next consider some special cases.  First, if $\xi_{12}, \xi_{13},
\xi_{23}$ are constant, by a conformal transformation, we may assume
that $\xi_{12} = \xi_{13} = \xi_{23} = 0$. The graph is then simply
$[\xi_0, 0, 0, 0, \xi_0 z^1, \xi_0 z^2, \xi_0 z^3, 0]$. Clearly, the closure adds the
$\cp^2$ given by
\begin{align*}
[0,0,0,0,W_1,W_2,W_3,0],
\end{align*}
whereas the $\cp^3$ given by
\begin{align}
\label{ccp3}
[\xi_0,0,0,0,W_1, W_2, W_3, 0]
\end{align}
is exactly the closure of the graph of the constant OCS $\bJ$ on
$\rr^6$ corresponding to $\phi=[1,0,0,0]$.
\begin{proposition}
\label{cstp6}
If $J$ is an OCS on $\rr^6$ which is asymptotic to the one defined by
$\phi=[1,0,0,0]$, the closure of the graph of $J$ adds the $\cp^2$
given by
\begin{align}
\label{1cp}
[0,0,0,0,W_1,W_2,W_3,0].
\end{align}
Moreover the closure of the graph of $J$ is homeomorphic to $\cp^3$.
\end{proposition}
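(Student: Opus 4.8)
The plan is to analyze the closure of the graph $\X = \overline{J(\rr^6)}$ by understanding precisely which points over $\cp^3_\infty = \pi^{-1}(\infty)$ it contains, and then to identify $\X$ with $\cp^3$ by an explicit degree/Hilbert-polynomial argument or by exhibiting $\X$ as a hyperplane section of the kind already computed in \eqref{ccp3}.

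First I would pin down the boundary points. Using the explicit parametrization \eqref{xiW}, a boundary point is a limit $[0,0,0,0,W_1,W_2,W_3,W_{123}]$ obtained along a sequence $\bfz_j\to\infty$. The asymptotic-constancy hypothesis means $\xi_{12}(\bfz),\xi_{13}(\bfz),\xi_{23}(\bfz)\to 0$; I would use this to control the growth of $W_i = \xi_0 z^i - \suml\xi_{im}\bz^m$ and of $W_{123} = z^1\xi_{23}-z^2\xi_{13}+z^3\xi_{12}$. The key point is that on any projectively convergent subsequence, after rescaling by the largest among $|z^i_j|$, the terms $\xi_{im}\bz^m_j$ are lower order (since $\xi_{im}\to 0$), so the limiting projective point has the form $[0,0,0,0,w_1,w_2,w_3,w_{123}]$ with $(w_1,w_2,w_3)$ the rescaled limit of $(z^1_j,z^2_j,z^3_j)$ and $w_{123}=0$ (the $W_{123}$ contributions are $|z^i_j|\cdot|\xi_{jk}|=o(|\bfz_j|)$). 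Thus every boundary point lies in the $\cp^2$ of \eqref{1cp}; conversely, one checks every point of that $\cp^2$ is attained (choose $\bfz_j = t_j(w_1,w_2,w_3)$ with $t_j\to\infty$). This establishes the first assertion: $\X\setminus J(\rr^6)$ is exactly the $\cp^2$ in \eqref{1cp}.

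For the homeomorphism claim, I would argue that $\X$ is a $3$-dimensional irreducible algebraic subvariety of $\Q^6$ (Bishop's theorem plus Chow, as in the Introduction — though here no finite-energy hypothesis is assumed, so I should instead note that $\X$ is the topological closure of a smooth manifold of real dimension $6$ and verify it is a subvariety directly, or simply invoke that the graph of a $C^1$ OCS defined on all of $\rr^6$ has the required regularity near $\cp^3_\infty$ because the $\xi_{ij}$ extend continuously by $0$). Then $\X$ meets the generic twistor fiber in one point, so it has bidegree $(1,p)$ in the sense of Section~\ref{linears}. I would compute $p$: intersecting $\X$ with the vertical $\cp^3_\infty$ should give the $\cp^2$ of \eqref{1cp} (with multiplicity $1$), and one checks this forces $p=0$. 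A threefold of bidegree $(1,0)$ in $\Q^6$ is a single horizontal $\cp^3$ — indeed it must be the unique member of the homology class of a horizontal $\cp^3$ that contains the given $\cp^2$, namely the closure \eqref{ccp3} of the graph of $\bJ$ itself. Hence $\X$, being an irreducible degree-one threefold through that $\cp^2$, is exactly this linear $\cp^3$, so $\X \cong \cp^3$.

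The main obstacle I anticipate is the regularity/closure step: showing that $\X$ is genuinely a (smooth, linear) algebraic subvariety rather than merely a closed set, given that we only assume $J$ is $C^1$ and asymptotically constant, without the finite-energy bound. The cleanest route is probably to show directly from the $C^1$ and asymptotic hypotheses that the functions $\xi_{ij}$, viewed through the coordinates $W_s$, extend continuously to $\cp^3_\infty$ with value $0$, so that near a boundary point $\X$ is the graph of a continuous map vanishing on $\cp^3_\infty\cap\X$; combined with the fact that the closure has finite $6$-dimensional Hausdorff measure locally (which follows from $J$ being globally defined and asymptotically constant, since $\|\nabla J\|$ then has controlled decay), Bishop's theorem applies and one concludes $\X$ is analytic, hence by Chow algebraic, and the bidegree computation finishes it. Alternatively, if one only wants the homeomorphism type, it suffices to exhibit an explicit homeomorphism $\cp^3\to\X$ extending the graph map $\bfz\mapsto([\bxi(\bfz)],\bfz)$, using the $\cp^3$ of \eqref{ccp3} as the model and the asymptotic-constancy estimate to see the two agree on the boundary $\cp^2$; I would present this more elementary version if the full analyticity is not needed downstream.
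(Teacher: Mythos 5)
Your treatment of the first claim is correct and is essentially the paper's argument made quantitative: asymptotic constancy forces the closure of $J(\rr^6)$ to add exactly the same boundary points as the closure \eqref{ccp3} of the graph of $\bJ$, namely the $\cp^2$ of \eqref{1cp}, and your rescaling estimate for the $W$-coordinates establishes this cleanly. The problem lies in your primary route to the homeomorphism. First, Bishop's theorem needs finite local area of the graph near $\cp^3_{\infty}$, and your claim that this follows because ``$\|\nabla J\|$ has controlled decay'' is unjustified: asymptotic constancy is a purely $C^0$ condition on $J$ and gives no bound whatever on $\nabla J$, so the finite-area input is simply not available from the hypotheses. Second, even granting that $\X=\overline{J(\rr^6)}$ is algebraic, the step ``the intersection with $\cp^3_{\infty}$ is the $\cp^2$ of \eqref{1cp} with multiplicity one, and this forces $p=0$'' is incorrect: $p$ is the homological intersection number with a \emph{horizontal} $\cp^3$, and cannot be read off from the set-theoretic intersection with a vertical fiber (which is not even proper); indeed the closures of warped-product graphs with $p>0$ also meet the fiber over infinity in a $\cp^2$, as shown in Section~\ref{finalsec}, so this datum does not pin down $p$.

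By contrast, the ``more elementary version'' you sketch at the end is exactly the paper's proof and is all that is required, since the proposition asserts only a homeomorphism: both graphs are sections over $\rr^6$, so send the point of $J(\rr^6)$ lying over $\mathbf{x}$ to the point of the graph of $\bJ$ over the same $\mathbf{x}$ and extend by the identity on the common boundary $\cp^2$; asymptotic constancy is precisely the statement that this bijection and its inverse are continuous at the boundary, identifying $\X$ with the $\cp^3$ of \eqref{ccp3} topologically. When the paper does need $\X$ to be a variety (in the subsequent propositions proving Theorem~\ref{t3}), it does not invoke Bishop at all: since the closure adds the analytic set $\cp^{n-1}$, the Thullen--Remmert--Stein theorem applies with no area hypothesis, and linearity is then extracted from Poincar\'e duality and a degree argument, not from a bidegree count at the fiber over infinity. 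I would therefore drop your algebraic route here and present the fiberwise homeomorphism as the proof.
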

\begin{proof}
The asymptotically constant condition means that 
the distance (in a suitable metric) between the 
graph of $J$ and the graph of the constant 
OCS goes to zero as $z \rightarrow \infty$, so 
the closure must add the same points. 
There is then an obvious homeomorphism between 
the graph of $J$ and the $\cp^3$ given in \eqref{ccp3}. 
\end{proof}
The same idea also works in higher dimensions.
\begin{proposition}
\label{pr1}
Let $J$ be a constant OCS on $\rr^{2n}$. Then $\pm J$ is isometrically
equivalent to the OCS $\bJ$ corresponding to $[1, 0, \dots, 0] \in
Z_n^+$.  Furthermore, the closure of the graph of $\bJ$ adds the
$\cp^{n-1}$ in the fiber over infinity given by
\begin{align}
\label{ccpn}
[W_1, \dots, W_n, 0,\dots, 0],
\end{align}
that is, all $W_* = 0$ if the multi-index $*$ is of length 
$3$ or greater. 
\end{proposition}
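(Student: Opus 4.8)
The plan is to prove the two assertions of Proposition~\ref{pr1} separately: first the classification of constant OCSes up to isometry, then the computation of the subvariety added at infinity to the graph of $\bJ$.

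For the first assertion, recall from Subsection~\ref{ocssec} that a constant OCS on $\rr^{2n}$ consistent with a fixed orientation corresponds to a skew-symmetric orthogonal matrix $J$ with $\Pf J = 1$, and any two such are conjugate under $\SO(2n)$: indeed by reducing $J$ to the block-diagonal form \eqref{bdf} via an orthogonal change of basis, one finds $J = A\bJ A^{-1}$ with $A \in \SO(2n)$, since the block form is forced and the Pfaffian pins down the orientation. For the opposite orientation one has $\Pf J = -1$, and then $-J$ has Pfaffian $(-1)^n \Pf J$; in either case one of $\pm J$ has positive Pfaffian (replacing $J$ by $-J$ reverses the induced orientation), hence is $\SO(2n)$-conjugate to $\bJ$. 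The isometry realizing the conjugation is precisely the linear map $A$. So $\pm J$ is isometrically equivalent to the OCS determined by $[1,0,\dots,0] \in \Z n$, since that projective spinor, by the recipe \eqref{etai} with all $\xi_{ij}=0$, has $+i$-eigenspace spanned by $dz^1,\dots,dz^n$, i.e.\ is the standard $\bJ$.

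For the second assertion, I use the biholomorphism $\FFF$ of Theorem~\ref{BIHOLO} to write the graph of $\bJ$ explicitly in $\Z{n+1}\subset\cp^{2N-1}$. The point $[\bxi] = [1,0,\dots,0]$ has all higher components $\xi_{i_1\cdots i_p} = 0$ for $p\ge 2$, so by \eqref{Wi} we get $W_i = z^i$ for $i=1,\dots,n$, and by \eqref{Wdef} every $W_{i_1\cdots i_q}$ with $q\ge 3$ vanishes identically (each term contains a factor $\xi_{i_1\cdots\hat{i_k}\cdots i_q}$ or $\xi_{i_1\cdots i_q m}$ of length $\ge 2$). Thus the graph is the set of points
\begin{align*}
[1,0,\dots,0,\ z^1,\dots,z^n,\ 0,\dots,0], \qquad (z^1,\dots,z^n)\in\cc^n.
\end{align*}
Its closure in $\cp^{2N-1}$ is obtained by letting $(z^1,\dots,z^n)\to\infty$ in $\cc^n$; rescaling by $\max_i|z^i|$, the first $N$ coordinates (which are $1,0,\dots,0$) tend to zero while the limiting values of the rescaled $(z^1,\dots,z^n)$ sweep out all of $\cp^{n-1}\subset\cp(\Lambda^{1,0})$. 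Hence the closure adds exactly the locus
\begin{align*}
[0,\dots,0,\ W_1,\dots,W_n,\ 0,\dots,0],
\end{align*}
which by Theorem~\ref{BIHOLO} lies in the missing twistor fibre $\Z n = \pi^{-1}(\infty)$, and is a linear $\cp^{n-1}$ inside it cut out by the vanishing of all $W_{i_1\cdots i_q}$ of length $q\ge 3$.

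I do not expect a serious obstacle here; the main point requiring care is the sign/Pfaffian bookkeeping in the first part — verifying that passing to $-J$ indeed flips the orientation, so that one always lands in the positively-oriented component $\Z n^+$ — together with checking that the homogeneous-coordinate rescaling in the second part genuinely produces \emph{all} of $\cp^{n-1}$ and not a proper subvariety (which is immediate since the $W_i$ are algebraically independent linear coordinates on $\cc^n$). Both are routine given the machinery of Sections~\ref{holocor} and~\ref{integ}.
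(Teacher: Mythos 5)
Your proposal is correct and follows essentially the same route as the paper: reduce $J$ (or $-J$) to $\bJ$ by an $\SO(2n)$ rotation using transitivity on positively-oriented constant OCSes, then write the graph of $\bJ$ in the twistor coordinates as $[1,0,\dots,0,z^1,\dots,z^n,0,\dots,0]$ and take all limits as $z\to\infty$ to see that exactly the $\cp^{n-1}$ of \eqref{ccpn} is added. Your extra bookkeeping with the Pfaffian and the rescaling argument just spells out steps the paper leaves implicit.
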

\begin{proof}
The orthogonal group $\SO(2n)$ acts transitively on $Z_n^+$
\cite{Cartan}, so we can simply rotate to arrange
that $J = \bJ$. 
Next, consider the $\cp^n \subset Z_{n+1}^+$, call it $P$, defined 
by  $\xi_* = 0$ for any multi-index $*$ of length $2$ 
or greater, and $W_* = 0$ if the multi-index $*$ is of length 
$3$ or greater. That is, in the $[\xi , W]$ coordinates on 
$Z_{n+1}^+$, this is given by 
\begin{align*}
[(\xi_0, 0, \dots, 0), (W_1, \dots W_n), 0, \dots, 0].
\end{align*}
We claim that $P$ is the closure of the 
graph of $\bJ$. To see this, using our twistor 
coordinates, the graph of 
$\bJ$ over $\rr^{2n}$ is given by
\begin{align*}
[(\xi_0, 0, \dots, 0), (\xi_0 z^1, \dots, \xi_0 z^n, 0, \dots, 0)].
\end{align*}
To find the closure, we take all possible limits 
as $z \rightarrow \infty$, and this clearly adds all points in \eqref{ccpn}.
\end{proof}

Without loss of generality we may therefore assume that 
$J$ is asymptotic to $\bJ$.
\begin{proposition}
\label{cstp}
If $J$ is an OCS on $\rr^{2n}$ which is asymptotic to $\bJ$, 
then the closure of the graph of $J$ adds
the same $\cp^{n-1}$ to the fiber over infinity as does 
$\bJ$. Moreover, the closure of the graph of $J$ is homeomorphic to $\cp^n$.
\end{proposition}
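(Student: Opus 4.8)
The plan is to mimic the argument for Proposition~\ref{cstp6}, now in arbitrary even dimension, using the twistor coordinates $[\bxi, W]$ on $\Z{n+1}$ set up in Section~\ref{integ}. First I would record the graph of $J$ explicitly: since $J$ is an OCS globally defined on $\rr^{2n}$, its graph lies in $\Z{n+1}\setminus\Z n$ and, in the coordinates of Theorem~\ref{BIHOLO}, is the locus
\begin{align*}
\big[\,\xi_0(\bfz),\,\xi_{12}(\bfz),\,\ldots,\, W_1(\bfz),\,\ldots\,\big],\qquad \bfz\in\rr^{2n},
\end{align*}
where the $\xi$-components are the spinor coordinates of $J(\bfz)$ (normalized so $\xi_0\equiv1$, which is legitimate since $J$ is asymptotic to $\bJ$ and hence has $\xi_0\neq0$ outside a compact set — one only needs a neighborhood of infinity for the closure computation) and the $W$-components are the holomorphic functions built from the $\xi$'s and $\bfz$ via \eqref{Wi} and \eqref{Wdef}.

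The key step is the limiting analysis. As $\bfz\to\infty$, the asymptotically constant hypothesis $\|J(\bfz)-\bJ\|\to0$ forces $\xi_{ij}(\bfz)\to0$ for all $i<j$, and hence (by the formulae \eqref{quadrics} expressing the higher $\xi$'s quadratically in the $\xi_{ij}$) all spinor components except $\xi_0$ tend to zero, while $W_i(\bfz)=z^i-\sum_m\xi_{im}(\bfz)\bz^m$ and the higher $W$'s behave to leading order like the corresponding expressions for $\bJ$. After projectivizing, any limit point therefore has first $N=2^{n-1}$ coordinates equal to $(1,0,\ldots,0)$ scaled away — i.e. lies in the missing fiber $\Z n=\pi^{-1}(\infty)$ — and in fact lies in the $\cp^{n-1}$ described in Proposition~\ref{pr1}, namely $[W_1,\ldots,W_n,0,\ldots,0]$ with all longer-index $W$'s vanishing. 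Conversely, every such point is attained as a limit along a suitable ray, exactly as in the proof of Proposition~\ref{pr1}. This shows the closure adds precisely the same $\cp^{n-1}$ as $\bJ$. For the homeomorphism statement, I would argue that the closure $\X$ is a compact set fibering over $\rr^{2n}\cup\{\infty\}=S^{2n}$ in a way that matches the fibration structure of the closure of $\bJ$'s graph (which is the $\cp^n$ of Proposition~\ref{pr1}): over $\rr^{2n}$ both are sections, hence each homeomorphic to $\rr^{2n}$, and both compactify by collapsing the fiber at infinity to the same $\cp^{n-1}$; the asymptotically constant condition guarantees the two compactifications glue the same way, giving a homeomorphism $\X\cong\cp^n$.

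The main obstacle I anticipate is making the limiting statement uniform and complete: one must show not merely that the \emph{values} $J(\bfz)$ converge, but that the \emph{projective} limit of the full $[\bxi,W]$-vector is controlled — in particular that no "hidden" limit points appear from sequences where some $z^i_j\to\infty$ at wildly different rates, or where $\|J(\bfz)-\bJ\|$ decays too slowly relative to $|\bfz|$ to kill the cross terms $\xi_{im}\bz^m$ in $W_i$. Handling this cleanly is really the content of the cited "distance between graphs goes to zero" heuristic in Proposition~\ref{cstp6}: I would phrase it as saying that, with respect to a fixed smooth metric on $\Z{n+1}$, the graph of $J$ and the graph of $\bJ$ have Hausdorff distance tending to $0$ near $\cp^n_\infty$, so their closures must agree on $\pi^{-1}(\infty)$; this sidesteps the need for precise rate estimates. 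The remaining verification — that the two closures are then homeomorphic — is routine once the boundary data is identified.
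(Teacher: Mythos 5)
Your proposal is correct and, in the form it finally settles on, is essentially the paper's own proof: the paper likewise argues that the asymptotically constant condition makes the (fiberwise, hence graph-to-graph) distance between $J(\rr^{2n})$ and $\bJ(\rr^{2n})$ tend to zero near infinity, so the closures add the same $\cp^{n-1}$ of Proposition~\ref{pr1}, and then invokes the obvious section-plus-boundary homeomorphism with the $\cp^n$ corresponding to $\bJ$. The preliminary coordinate expansion of the $\xi$'s and $W$'s is harmless but unnecessary once you use that Hausdorff-distance argument, exactly as you note.
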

\begin{proof}
Exactly as in the six-dimensional case, the asymptotically constant 
condition means that the distance (in a suitable metric) between the 
graph of $J$ and the graph of the constant 
OCS goes to zero as $z \rightarrow \infty$, so 
the closure must add the same points as does $\bJ$. 
Since both the graphs of $J$ and $\bJ$ hit every other 
twistor fiber in a single point, there is then an obvious 
homeomorphism between the graph of $J$ and the $\cp^{n}$ given in \eqref{ccpn}
corresponding to $\bJ$. 
\end{proof}

\begin{proposition} 
If $J$ is an OCS on $\rr^{2n}$ which is asymptotic to $\bJ$, 
then the closure of the graph of $J$ is a linear $\cp^n$. 
\end{proposition}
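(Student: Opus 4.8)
The plan is to show that the closure $\X=\overline{J(\rr^{2n})}$, which by Proposition~\ref{cstp} is homeomorphic to $\cp^n$ and adds exactly the $\cp^{n-1}$ described in \eqref{ccpn} at the fiber over infinity, must actually be a \emph{linear} $\cp^n\subset\Z{n+1}$. First I would record what is already known: $\X$ is a closed subvariety of $\Z{n+1}$ of complex dimension $n$; it meets the generic twistor fiber in a single point and the distinguished fiber $\cp^{n-1}_\infty=\pi^{-1}(\infty)$ in a linear $\cp^{n-1}$; and, by the finite-energy/Bishop--Chow argument recalled in the Introduction and in Section~\ref{finalsec}, $\X$ is an algebraic subvariety (an OCS globally defined on $\rr^{2n}$ that is asymptotically constant certainly has graph of finite area, being asymptotic to a linear $\cp^n$). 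Hence $\X$ has a well-defined degree in the projective space $\cp^{2N-1}$ ambient to $\Z{n+1}$, and a well-defined class in homology.

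The core of the argument is a degree computation. I would intersect $\X$ with the hyperplane section of $\Z{n+1}$ cut out by $\xi_0=0$, i.e.\ with the twistor fiber $\cp^{n-1}_\infty$ together with its scheme structure inside $\Z{n+1}$; more precisely I would compute $\deg\X$ by intersecting $\X$ with $n$ generic hyperplanes in $\cp^{2N-1}$ and argue that this equals the number of points in which $\X$ meets a generic twistor fiber. The point is that a generic twistor fiber is a \emph{linear} $\cp^{n-1}$ in $\Z{n+1}$ (it is one of the ``vertical'' linear $\cp^{n-1}$-s of Subsection~\ref{linears}), so $\X\cap(\text{generic fiber})$ is cut out on $\X$ by $n-1$ hyperplanes; since this intersection is a single reduced point, and since by homogeneity under the Lorentz conformal group all twistor fibers are equivalent, one deduces $\deg\X\cdot(\text{something})=1$, forcing $\deg\X=1$. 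A subvariety of projective space of dimension $n$ and degree $1$ is a linear $\cp^n$. Concretely, in the notation of Section~\ref{integ}, $\X$ is the closure of $\{[\xi_0,\dots,\xi_{1\cdots n},W_1,\dots,W_{1\cdots n}]\}$ with $\xi$-coordinates of length $\ge2$ being functions on $\rr^{2n}$; the asymptotically-constant hypothesis forces these $\xi$-functions to be bounded, and boundedness of the rational functions $\xi_{i_1\cdots i_k}$ that are regular on all of affine $\rr^{2n}$ (since $J$ is globally defined) and vanish at infinity will force them to be identically zero by Liouville, after which $\X$ is visibly the linear $\cp^n$ of Proposition~\ref{pr1}.

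I expect the main obstacle to be making the ``boundedness at infinity'' step fully rigorous: one must know that the homogeneous coordinates $\xi_{i_1\cdots i_k}$ of the graph, as rational functions on $\X\cong\cp^n$, are genuinely \emph{holomorphic} on the affine chart (not merely defined away from a subvariety), so that their vanishing at the hyperplane at infinity forces them to vanish identically. This requires knowing that $\X$ is smooth, or at least normal, along $\cp^{n-1}_\infty$ and that the affine part of $\X$ is literally the graph of the $C^1$ map $J$ with no extra vertical components over finite points — which follows because $J$ is defined on \emph{all} of $\rr^{2n}$, so the projection $\X\to S^{2n}$ is one-to-one over $\rr^{2n}$. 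An alternative, perhaps cleaner, route that avoids coordinate chasing is purely homological: $\X$ is homeomorphic to $\cp^n$, it meets a vertical linear $\cp^{n-1}$ transversally in one point and (being a section over $\rr^{2n}$) meets a generic horizontal linear $\cp^{n-1}$ also in one point, so $\X$ has the same intersection numbers with the two families of linear $\cp^{n-1}$-s as a linear $\cp^n$ does; combined with algebraicity (Chow) and the structure of the relevant Chow variety — equivalently, using that the only effective algebraic $n$-cycle in that homology class with the correct restriction to $\cp^{n-1}_\infty$ is the linear one — this pins down $\X$. I would present the degree/Liouville argument as the main line and remark on the homological shortcut.
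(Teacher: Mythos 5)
Your core degree computation would not go through. The fiber of $\Z{n+1}\to S^{2n}$ is $\Z{n}=\SO(2n)/\U(n)$, of complex dimension $n(n-1)/2$ (already for $n=3$ it is a $\cp^3$, not a $\cp^2$), so "a generic twistor fiber is a linear $\cp^{n-1}$" is false; moreover the fiber over infinity is cut out by \emph{all} $N=2^{n-1}$ equations $\xi_*=0$, not by the single hyperplane $\xi_0=0$, so fibers are not hyperplane sections. More fundamentally, "$\deg\X$ equals the number of points of intersection with a generic fiber" cannot be right: the graph of \emph{every} globally defined OCS meets the generic fiber in exactly one point, yet the warped-product graphs on $\rr^6$ have degree $p+1$ in $\cp^7$. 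Meeting fibers once only controls one component of the bidegree-type data, never the projective degree, so your argument could not distinguish the asymptotically constant case from a warped product. The ingredient the paper actually uses, and which is absent from your main line, is the topology supplied by Proposition~\ref{cstp}: since $\X$ is homeomorphic to $\cp^n$ it satisfies Poincar\'e duality and $H^2(\X)$ has rank one; writing $H=k\X_0$ with $\X_0$ the $\cp^{n-1}$ added at infinity, the relation $H^{n-1}\cup\X_0=1$ on $\X$ forces $k=1$, hence $H^n=1$, i.e. $\deg\X=1$, and a degree-one $n$-fold in projective space is a linear $\cp^n$. Your closing "homological shortcut" points in this direction but again misidentifies the fibers as linear $\cp^{n-1}$-s and leaves the decisive step ("the structure of the relevant Chow variety") unsupported.

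Two further steps are also gapped. First, algebraicity: asymptotic constancy bounds $\|J-\bJ\|$ but says nothing about $\nabla J$, so the graph need not have finite area and Bishop's theorem is not available; the paper instead observes that the closure adds only the $(n-1)$-dimensional set \eqref{ccpn}, so the Thullen--Remmert--Stein theorem gives analyticity of $\X$, and Chow then gives algebraicity. Second, the Liouville step: the functions $\xi_{i_1\cdots i_k}/\xi_0$ are only meromorphic on $\X$, and nothing in the hypotheses rules out the graph meeting $\{\xi_0=0\}$ over finite points of $\rr^{2n}$ before the theorem is proved; a meromorphic function on a compact variety that decays along a divisor at infinity need not vanish (compare $1/z$ on $\cp^1$). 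You flag the smoothness/normality issue along $\cp^{n-1}_\infty$, but the real obstruction is possible poles in the affine part, which your proposal does not address; in any case that identification of \emph{which} linear $\cp^n$ one gets is the content of the next proposition in the paper, while the statement at hand needs only the degree-one conclusion.
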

\begin{proof}
From the previous proposition, the assumption implies that 
$\X = \overline{J(\rr^{2n})}$ is homeomorphic
to $\cp^n$, which is contained inside the twistor space
$\Z{n+1}\subset\cp^{2N-1}$ where $N=2^{n-1}$. Since $\X$ is
homeomorphic to a manifold, it satisfies Poincar\'e duality. Also,
taking the closure adds $\X_0 = \cp^{n-1}$ inside the fiber at
infinity, so by the Thullen--Remmert--Stein Theorem, $\X$ is
necessarily a variety  \cite{Thullen, RemmertStein}.

We denote by $H$ the hyperplane section class on $\cp^{2N-1}$, and
also its pullbacks to subvarieties. Then the cup product $H^{n-1} \cup
\X_0$ equals $1$ on $X$. Since $H^2(\X)$ has rank~$1$ with generator
the (Poincar\'e dual of) $\X_0$, we have $H=k\X_0$ in cohomology,
which implies that $k^{n-1}\X_0^n = 1$.  In view of the integrality of
intersection numbers, $k=1$ so $H=\X_0$.  This means that $H^n= 1$ on
$\X$, so the degree of $\X$ is $1$, which implies that $\X$ is a
linear $\cp^n$ in $\cp^{2N-1}$; see \cite[page~174]{GriffithsHarris}.
\end{proof}

\begin{proposition}
If $J$ is an OCS on $\rr^{2n}$ which is asymptotic to $\bJ$, 
then the closure of the graph of $J$ 
is the same linear $\cp^n$ as that which corresponds to $\bJ$.
\end{proposition}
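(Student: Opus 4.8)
The plan is to combine the two preceding propositions: by the previous proposition $\X:=\overline{J(\rr^{2n})}$ is a \emph{linear} $\cp^n$ inside $\Z{n+1}\subset\cp^{2N-1}$, and by Proposition~\ref{cstp} it meets the fiber $\pi^{-1}(\infty)$ in exactly the $\cp^{n-1}$ of \eqref{ccpn}, call it $\X_0$, which is also the piece added at infinity by $\bJ$. Writing $\X=\PP(U)$ and $\X_0=\PP(V_0)$ with $V_0\subset U$, $\dim_\cc U=n+1$, $\dim_\cc V_0=n$, the point to exploit is that $V_0$ is the subspace of $\cc^{2N}$ cut out by $\xi_\alpha=0$ for \emph{every} multi-index $\alpha$ and $W_\alpha=0$ for $|\alpha|\ge 3$.

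The decisive step is then pure linear algebra. Each coordinate functional $\xi_\alpha$ with $|\alpha|\ge 2$, as well as $\xi_0$, vanishes on $V_0$, hence its restriction to $U$ factors through the line $U/V_0$; consequently there are scalars $a_\alpha$ with $\xi_\alpha=a_\alpha\,\xi_0$ identically on $U$, hence on $\X$. Restricting to the graph $J(\rr^{2n})=\X\setminus\X_0$: a graph point lies in $\Z{n+1}\setminus\Z n$, so not all $\xi$-coordinates vanish there, and together with $\xi_\alpha=a_\alpha\xi_0$ this forces $\xi_0\ne 0$ at every graph point; thus in the standard twistor chart $\xi_0\equiv 1$ on the graph and therefore $\xi_\alpha\equiv a_\alpha$ there for all $|\alpha|\ge 2$. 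In particular the skew-symmetric matrix $(\xi_{ij})$ that determines $J$ is constant, so $J$ is a constant OCS. Since $J$ is asymptotic to $\bJ$, the equality $\|J-\bJ\|=\lim_{\mathbf x\to\infty}\|J(\mathbf x)-\bJ\|=0$ gives $J=\bJ$, whence $\X=\overline{\bJ(\rr^{2n})}$, which is exactly the linear $\cp^n$ attached to $\bJ$ in Proposition~\ref{pr1}.

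I expect no serious obstacle here; the only point requiring care is ruling out that $\X\subset\pi^{-1}(\infty)$ (equivalently, that $\xi_0$ vanishes identically on $U$), which is immediate because $J$ is globally defined over $\rr^{2n}$ and its graph is genuinely of full dimension over the finite part. In dimension six one could argue more geometrically instead, using that in $\Q^6$ a linear $\cp^3$ meeting a twistor fiber in a $\cp^2$ is the unique horizontal $\cp^3$ with that prescribed special fiber (Section~\ref{linears}), and that both $\X$ and $\overline{\bJ(\rr^6)}$ contain the $\cp^2=\X_0\subset\pi^{-1}(\infty)$; but I would give the coordinate argument above, since it is uniform in $n$ and needs no extra input.
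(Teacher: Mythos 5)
Your argument is correct in substance and reaches the conclusion by a route close in spirit to, but organized differently from, the paper's. The paper also starts from the two preceding propositions, but works with the $2N-1-n$ linear equations defining $\X$: restricting them to the fiber at infinity identifies their $\bfW$-parts with the functionals $W_*$, $|*|\ge3$; after regrouping, the $2^{n-1}-1$ equations purely in $\bxi$ pin down the fiber coordinate, and the remaining equations force $W_*\equiv0$ along the graph for every multi-index of length at least $3$. Since by \eqref{Wdef} each such $W_*$ is linear in $z,\bar z$ with $\xi$-coefficients, this kills $\xi_*$ for all $|*|\ge2$, so the paper obtains $J=\bJ$ (hence the same $\cp^n$) without re-invoking the asymptotic hypothesis at the end. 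You instead dualize: all coordinate functionals $\xi_\alpha$ vanish on $V_0$ and hence factor through the line $U/V_0$, so the fiber coordinate $[\bxi]$ is constant along the graph; you then use asymptotic constancy once more to identify the constant structure with $\bJ$. This is a legitimate and somewhat leaner variant, avoiding the bookkeeping with the defining equations and the coefficient analysis of the long $W_*$, at the cost of appealing to the hypothesis a second time, which is of course available.

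One step needs repair. The parenthetical claim that ``$\xi_0$ vanishes identically on $U$'' is equivalent to $\X\subset\pi^{-1}(\infty)$ is not right: the fiber at infinity is where \emph{all} $\xi$-coordinates vanish, and a priori $\xi_0|_U$ could be identically zero while some other $\xi_\beta|_U$ is not, in which case your normalization $\xi_\alpha=a_\alpha\xi_0$ is unavailable; the global definedness of $J$ over $\rr^{2n}$ only rules out that every $\xi_\alpha$ vanishes on $U$. The fix is immediate from the hypothesis you already use: since $J(\mathbf{x})\to\bJ$ and $\bJ$ corresponds to $[1,0,\dots,0]$, the component $\xi_0(\mathbf{x})$ is nonzero for $\mathbf{x}$ large, so $\xi_0|_U\not\equiv0$. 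Alternatively you can avoid $\xi_0$ altogether: all $\xi_\alpha|_U$ are proportional to a single nonzero functional vanishing on $V_0$, and at each graph point not all $\xi_\alpha$ vanish, so the projective class $[\bxi]$ is already constant along the graph; this gives constancy of $J$ directly, and asymptotic constancy then forces $J=\bJ$ as you say.
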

\begin{proof}
From the previous proposition, $X$ is a linear 
$\cp^{n}\subset \Z{n+1}\subset\cp^{2N-1}$ where $N=2^{n-1}$.
Therefore, there are exactly $2N - 1 - n$ linear equations 
defining $X$, which we write as 
\begin{align*}
{\bf{a}}_j \cdot \bxi + {\bf{b}}_j 
\cdot {\bf{W}} & = 0,\quad j = 1, 2,  \dots,  2^n - 1 - n
\end{align*}
(with slight abuse of notation). If we restrict these equations to
the fiber over infinity given by $\xi_* = 0$, we have
\begin{align*}
{\bf{b}}_j\cdot{\bf{W}} & = 0,\quad j = 1,2,\dots, 2^n - 1 - n.
\end{align*}
However, we know that these equations must define the $\cp^{n-1}$ from
$\eqref{ccpn}$, which is the condition that $W_* = 0$ if the
multi-index $*$ is of length $3$ or greater. Consequently, by taking
linear combinations, we may regroup the defining equations as
\begin{align}
\label{We}
{\bf{a}}_j'\cdot \bxi + W_{*_j} & = 0,\quad j = 1,2,\dots,2^{n-1}-n,
\end{align}
where $*_j$ is a multi-index of length $3$ or greater (and $j$ now
labels these multi-indices), together with
\begin{align}
\label{We2}
{\bf{a}}_j''\cdot \bxi = 0,\quad j = 1,2,\dots, 2^{n-1}-1.
\end{align}
All these equations must be linearly independent (else our equations
would define a subspace $\cp^l$, $l > n$). The collection \eqref{We2}
therefore specifies a single point in the fiber over the origin. When
we restrict \eqref{We} to the origin (so $W_{*_j}=0$) we cannot have
$\bxi=0$. Consequently, so we must be able to use \eqref{We2} to rid 
\eqref{We} of all $\xi$ terms.
We have therefore reduced the equations to the form
\eqref{We2} and
\begin{align}
\label{Wee}
W_{*_j} &= 0,\quad *_j \mbox{ is a multi-index of length $3$ or
  greater},
\end{align}
Recall from \eqref{Wdef} that if $*$ is a
multi-index of length $q\ge3$ then
\begin{align}
\label{Wq}
W_* = \xi_-\cdot z + \xi_+\cdot \bar{z} 
\end{align}
where $-,+$ represent multi-indices of lengths $q-1,q+1$.  But $\xi$
is determined by \eqref{We2}, and the only way that \eqref{Wq} can 
vanish is if all of the coefficients of $z$ and $\bar{z}$ 
are zero. From \eqref{Wee}, this is true for all odd multi-indices of length $3$ or
greater, which shows that $\xi_*= 0$ for all even multi-indices of
length $2$ or greater. Therefore, the $\cp^n$ is question is the same
as that corresponding to $\bJ$.
\end{proof}
This completes the proof of Theorem \ref{t3}.
\begin{remark}{\em
We shall show in Section~\ref{finalsec} that the closure of $J(\rr^6)$
for any globally defined $J$ adds a $\cp^2$ in the twistor fiber over
infinity.  However, $\X = \overline{J(\rr^6)}$ can have singularities,
and will not necessarily be homeomorphic to a manifold.  The above
proof will not work since $\X$ will then not necessarily satisfy
Poincar\'e duality.}
\end{remark}
\section{The twistor fibration to $S^6$}
\label{twistorprojection}

In this section, we shall provide an explicit matrix representation of
the Clifford multiplication
\begin{align}
\label{Cliffmult}
   \rr^6\otimes\Delta_\pm\to\Delta_\mp.
\end{align}
We shall then use this, in accordance with Theorem~\ref{Clifford}, to
describe elements of the twistor space $Q^6$ of $S^6$.  The results of
this section can also be interpreted in terms of Cayley numbers, but
the approach we adopt will provide an effective description of the
action of the conformal group.

To emphasize the symmetry underlying the definitions of the functions
$W_1,W_2,W_3$ and $W_{123}$, we introduce the following 4-vectors for
exclusive use in this section:
\begin{align}
\begin{split}
\label{4vec}
\bxi &= (\xi_0,\,\xi_{12},-\xi_{13},\,\xi_{23})\in\Delta_+\\[3pt]
\bfW &= (-W_{123},W_3,W_2,W_1)\in\Delta_-.
\end{split}
\end{align}
They will be regarded as rows or columns, according to context. The
choice of order and signs here represents a compromise between our
previous ordering and a suitable canonical form for the matrix that
follows.

The four equations (\ref{Wi6}),\,(\ref{W123}) can now be combined into
the form $\bfW=\Mz\bxi$, where
\begin{align}
\label{Mmat}
\Mz =
\left(
\begin{matrix}
0    &   -z^3 &   -z^2 &   -z^1 \\
z^3  &  0     &  -\bz^1 & \bz^2 \\
z^2  & \bz^1 &    0   &  -\bz^3 \\
z^1  & -\bz^2 & \bz^3 &    0   \\
\end{matrix}
\right)
\end{align}
parametrizes a point in the Euclidean space $\cc^3=\rr^6$.  To
emphasize this, we let $\sE$ denote the set of all such matrices
$M(\bfz)$ with $\bfz=(z^1,z^2,z^3)\in\cc^3$. Note that $\sE$ is a
\emph{linear} subspace of the Lie algebra $\mathfrak{so}(4,\cc)$ of
skew-symmetric complex $4\times4$ matrices.

With the adjusted conventions \eqref{4vec}, the biholomorphism
$\FFF:\Z3 \times \rr^6\to\Z4 \setminus\Z3$ defined by \eqref{FFF} is
given by mapping $([\bxi],\,\bfz)$ to
\begin{align}
  \label{coordcp7} 
[\bxi,\,\Mz\bxi] =
  [\xi_0,\,\xi_{12},-\xi_{13},\xi_{23},\,-W_{123},W_3,W_2,W_1],
\end{align}
rather than (\ref{xiW}). The equation (\ref{xidotW}) characterizing
$\Z4$ can be neatly written
\begin{align}
\label{dotproduct}
\bxi\cdot\bfW=0,
\end{align}
relative to the standard complex bilinear pairing, and the fact that
$[\bxi,\,\bfW]\in\Z4$ is now a consequence of the skew symmetry of
$\Mz$.

The link with Clifford algebras is provided by the following result,
whose proof is a direct calculation.

\begin{lemma} Let $\bfy,\bfz\in\cc^3$. Then
\label{Clifford2}
\begin{align}
\label{zw+zw}
\overline{M(\bfy)}M(\bfz) + \overline{M(\bfz)}M(\bfy) =
-2\,\mathfrak{Re}\!\left<\bfy,\bfz\right>\!I,
\end{align}
where $I$ is the identity matrix, and
$\left<\bfy,\bfz\right>=\sum_{i=1}^3\!y^i\overline z^i$.
\end{lemma}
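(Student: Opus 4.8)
The plan is to verify \eqref{zw+zw} by a direct but cleverly organized computation, exploiting the structure of $M(\bfz)$ rather than expanding all sixteen entries of the product. First I would note that the map $\bfz\mapsto M(\bfz)$ is $\cc$-linear, so $\overline{M(\bfy)}M(\bfz)+\overline{M(\bfz)}M(\bfy)$ is $\rr$-bilinear and symmetric (in the real sense) in $\bfy,\bfz$; it therefore suffices to check the identity in two cases: the diagonal case $\bfy=\bfz$, which pins down the ``norm'' part, and the polarized off-diagonal contributions, which one gets for free once the diagonal case and bilinearity are in hand. Concretely, I would first establish
\begin{align*}
\overline{M(\bfz)}M(\bfz) = -|\bfz|^2\,I,
\end{align*}
where $|\bfz|^2=\sum_i z^i\overline z^i=\mathfrak{Re}\langle\bfz,\bfz\rangle$, and then recover the general statement by replacing $\bfz$ with $\bfy+\bfz$ and subtracting the two diagonal identities.

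For the diagonal computation, the key observation is that $M(\bfz)$ is built from the three ``coordinate'' matrices $M(\bfe_1),M(\bfe_2),M(\bfe_3)$ (where $\bfe_k$ is the $k$th standard basis vector of $\cc^3$) via $M(\bfz)=\sum_k z^k M(\bfe_k)$; call these $A_1,A_2,A_3$. Each $A_k$ is a real skew-symmetric orthogonal $4\times4$ matrix — in fact one reads off from \eqref{Mmat} that $A_1,A_2,A_3$ together with products generate a copy of the quaternions acting on $\rr^4$, i.e. they satisfy $A_k^2=-I$ and $A_jA_k=-A_kA_j$ for $j\ne k$ (these are finite checks on explicit $4\times4$ matrices). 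Then, writing $z^k=a_k+i b_k$ and $M(\bfz)=\sum_k a_k A_k + i\sum_k b_k A_k$, one has $\overline{M(\bfz)}=\sum_k a_k A_k - i\sum_k b_k A_k$, and
\begin{align*}
\overline{M(\bfz)}M(\bfz)
= \Big(\suml_k a_k A_k\Big)^2 + \Big(\suml_k b_k A_k\Big)^2
  + i\Big[\suml_{j,k} a_j b_k A_j A_k - \suml_{j,k} b_j a_k A_j A_k\Big].
\end{align*}
The anticommutation relations make the cross terms in each square collapse, giving $\big(\sum_k a_k A_k\big)^2=-\big(\sum_k a_k^2\big)I$ and likewise for the $b$'s, while the bracketed imaginary part is $i\sum_{j\ne k}(a_j b_k - b_j a_k)A_jA_k$, which vanishes because $A_jA_k$ is skew in $j,k$ while $a_jb_k-b_ja_k$ is also skew in $j,k$ — wait, that makes the product symmetric, so one must instead observe directly that the coefficient of each $A_jA_k$ with $j<k$ is $(a_jb_k-a_kb_j)-(b_ja_k-b_ka_j)=2(a_jb_k-a_kb_j)$, and this does \emph{not} obviously vanish; the resolution is that the diagonal term $j=k$ already contributed and the genuinely mixed real/imaginary cross terms actually recombine to $\mathfrak{Im}$ of something that is killed only after also using that $\overline{M(\bfz)}M(\bfz)$ must be Hermitian. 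So the cleaner route is simply to expand $\overline{M(\bfz)}M(\bfz)$ using the relations $A_k^2=-I$, $A_jA_k+A_kA_j=0$, and collect: the result is $-\big(\sum_k|z^k|^2\big)I$ plus a term $i\sum_{j<k}(z^j\overline z^k - z^k\overline z^j)A_jA_k$, and since $z^j\overline z^k - z^k\overline z^j$ is purely imaginary this is a \emph{real} skew-symmetric matrix; one then checks on the explicit matrices $A_jA_k$ that these three matrices, multiplied by these three imaginary scalars, sum to zero — equivalently, that $\sum_{j<k}(z^j\overline z^k-z^k\overline z^j)A_jA_k = 0$ for all $\bfz$, which amounts to the linear independence fact that $A_1A_2,A_1A_3,A_2A_3$ are independent together with the identity $\overline{M(\bfz)}M(\bfz)$ being symmetric, forcing the antisymmetric part to vanish.

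The main obstacle, then, is bookkeeping rather than conceptual: organizing the $4\times4$ matrix algebra so that one does not simply brute-force all entries. I expect the slickest writeup to proceed as follows: (1) record the three matrices $A_1,A_2,A_3$ from \eqref{Mmat} and verify $A_k^\top=-A_k$, $A_k^2=-I$, $A_jA_k=-A_kA_j$ ($j\ne k$); (2) note $\overline{M(\bfz)}=M(\bfz)^\top$ entrywise-conjugated is skew-symmetric transpose, so $\overline{M(\bfz)}=-M(\bar{\bfz})^{\dots}$ — more precisely $\overline{M(\bfz)}M(\bfz)$ is what we want; (3) polarize to reduce \eqref{zw+zw} to the single identity $M(\bfz)^\ast M(\bfz)=|\bfz|^2 I$ up to sign, where $\ast$ is conjugate-transpose, which says exactly that $M(\bfz)/|\bfz|$ is unitary — and unitarity of $M(\bfz)/|\bfz|$ can be checked by verifying that its four columns are orthonormal, which is a short computation directly from \eqref{Mmat}; (4) combine with skew-symmetry of $M(\bfz)$ to pass from $M(\bfz)^\ast M(\bfz)$ to $\overline{M(\bfz)}M(\bfz)$ and extract the factor $-2\mathfrak{Re}\langle\bfy,\bfz\rangle$. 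I would present step (3) — columns of $M(\bfz)$ are mutually orthogonal of norm $|\bfz|$ — as the single computational lemma, since it is genuinely just four inner products, and everything else is formal.
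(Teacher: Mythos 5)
Your final writeup plan is correct and complete: reduce by polarization to the diagonal case, verify the diagonal case as the statement that the four columns of $M(\bfz)$ are mutually Hermitian-orthogonal of length $\|\bfz\|$ (that is, $M(\bfz)^{*}M(\bfz)=\|\bfz\|^{2}I$), and then use skew-symmetry, $M(\bfz)^{*}=-\overline{M(\bfz)}$, to convert this into $\overline{M(\bfz)}M(\bfz)=-\|\bfz\|^{2}I$ before polarizing. This is in substance the same ``direct calculation'' the paper has in mind; indeed the paper states the column-orthonormality fact immediately after the lemma. One correction even here: polarization needs only that $\bfz\mapsto M(\bfz)$ be $\rr$-linear and that both sides of \eqref{zw+zw} be symmetric $\rr$-bilinear, which is what actually holds; the map is \emph{not} $\cc$-linear.

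That false $\cc$-linearity claim is what invalidates the route you spend most of the proposal on. Because \eqref{Mmat} contains the conjugates $\bz^{1},\bz^{2},\bz^{3}$ as well as $z^{1},z^{2},z^{3}$, one has $M(i\mathbf{e}_{1})\ne iM(\mathbf{e}_{1})$ (the $(2,3)$ and $(3,2)$ entries change sign), and consequently $M(\bfz)\ne\sum_{k}z^{k}M(\mathbf{e}_{k})$. So the expansion of $\overline{M(\bfz)}M(\bfz)$ in terms of $A_{1},A_{2},A_{3}$ with complex coefficients computes the wrong matrix, and the cross-term trouble you ran into is the symptom, not a bookkeeping issue: were that expansion valid, the term $i\sum_{j<k}(z^{j}\bz^{k}-z^{k}\bz^{j})A_{j}A_{k}$ would be nonzero for, say, $\bfz=(1,i,0)$ (by the very independence of the $A_{j}A_{k}$ you invoke), contradicting the true identity $\overline{M(\bfz)}M(\bfz)=-\|\bfz\|^{2}I$; appealing to Hermitian-ness of the product to make it vanish is therefore not a repair but a proof that the premise was false. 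The correct real-linear decomposition is $M(\bfz)=\sum_{k}\bigl(a_{k}M(\mathbf{e}_{k})+b_{k}M(i\mathbf{e}_{k})\bigr)$ with $M(i\mathbf{e}_{k})\ne iA_{k}$, i.e.\ the lemma is exactly the statement that these six matrices implement a real Clifford multiplication, not a quaternionic action with complex scalars. Since your steps (3)--(4) never use the $A_{k}$ or their relations, simply delete step (1) and the $\cc$-linear expansion; what remains is a correct proof, essentially identical to the paper's.
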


It will be convenient to denote by $\sE^*$ the subset
$\sE\setminus\{0\}$, and set
\begin{align*}
\hat\sE=\{M(\bfz)\in\sE^*:\|\bfz\|=1\},
\end{align*}
where $\|\bfz\|^2=\sum_{i=1}^3\!|z^i|^2$.  If $M=\Mz\in\hat\sE$ then
its columns are orthonormal in the Hermitian sense, and $M\in U(4)$.
Indeed, the lemma implies that $M\overline M\tp=\|\bfz\|^2I$, and
a direct calculation confirms that
\begin{align}
\label{detM} 
\det\Mz = \|\bfz\|^4. 
\end{align}
The next result establishes a curious link with the way in which
linear OCSes are themselves represented by matrices via (\ref{Pf}).

\begin{lemma}
$\hat\sE=\{M\in \SU(4)\cap\mathfrak{so}(4,\cc):\Pf M=1\}$.
\end{lemma}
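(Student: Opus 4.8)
The plan is to show mutual inclusion between $\hat\sE$ and the set $S=\{M\in\SU(4)\cap\mathfrak{so}(4,\cc):\Pf M=1\}$. For the inclusion $\hat\sE\subseteq S$, take $M=\Mz$ with $\|\bfz\|=1$. We already know from the displayed calculation that $M\overline M\tp=\|\bfz\|^2 I=I$, so $M$ is unitary, and $M$ is skew-symmetric by construction of the matrices in $\sE$; together with $\det M=\|\bfz\|^4=1$ from \eqref{detM} this gives $M\in\SU(4)\cap\mathfrak{so}(4,\cc)$. It remains to check $\Pf M=1$ rather than $\Pf M=-1$. Using \eqref{Pf}, $(\Pf M)^2=\det M=1$, so $\Pf M=\pm1$; to pin down the sign I would evaluate the Pfaffian of $\Mz$ directly from the entries in \eqref{Mmat}. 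Writing $M=(m_{ij})$, the Pfaffian of a $4\times4$ skew-symmetric matrix is $m_{12}m_{34}-m_{13}m_{24}+m_{14}m_{23}$; plugging in the entries of \eqref{Mmat} gives $(-z^3)(-\bz^3)-(-z^2)(\bz^2)+(-z^1)(-\bz^1)=|z^1|^2+|z^2|^2+|z^3|^2=\|\bfz\|^2=1$. (One should double-check the sign convention for the Pfaffian matches the one fixed earlier in the paper via \eqref{Pf}, but since the formula is used only up to this consistency it is harmless.) Hence $\hat\sE\subseteq S$.

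For the reverse inclusion $S\subseteq\hat\sE$, suppose $M\in\SU(4)\cap\mathfrak{so}(4,\cc)$ with $\Pf M=1$. The key point is that $\sE$, being a $3$-complex-dimensional linear subspace of $\mathfrak{so}(4,\cc)$ (which is itself $6$-complex-dimensional), is cut out linearly, and I would identify which linear conditions characterize it. Inspecting \eqref{Mmat}, the matrices in $\sE$ are exactly the skew-symmetric $M$ whose entries satisfy the three "reality" relations $m_{12}=\overline{m_{34}}$, $m_{13}=-\overline{m_{24}}$, $m_{14}=\overline{m_{23}}$ — equivalently, $M$ lies in $\sE$ iff $M$ is skew-symmetric and the ordered triple of products appearing in $\Pf M$ is conjugate to the triple defining the entries, which one can package as: $M\in\mathfrak{so}(4,\cc)$ lies in $\sE$ iff $\overline M = -\,\varepsilon M\varepsilon$ for the fixed permutation-with-signs matrix $\varepsilon$ that swaps the two "halves" of the coordinates. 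So the real work is to verify that every $M\in\SU(4)\cap\mathfrak{so}(4,\cc)$ automatically satisfies this reality constraint.

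The clean way to do this is representation-theoretic. The space $\mathfrak{so}(4,\cc)$ decomposes as $\mathfrak{so}(3,\cc)_+\oplus\mathfrak{so}(3,\cc)_-$ into self-dual and anti-self-dual parts under the Hodge $*$ on $\Lambda^2\cc^4$; equivalently, $\Pf$ is the bilinear pairing of the two summands, and $\det M=(\Pf M)^2$ forces a $2$-dimensional "$\Pf=1$" slice in each product of norm-fixed pieces. The condition $M\in\SU(4)$, i.e. $M\overline M\tp=I$ together with skew-symmetry, says $M^{-1}=\overline M\tp=-\overline M$, so $M\overline M=-I$; combined with $\Pf M=1$ this is exactly the statement that $M$ defines a positively-oriented maximal isotropic subspace stable under the quaternionic/conjugate-linear structure, which by the classical parametrization of such $M$ (the same $\SU(4)=\Spin(6)$ picture already invoked for $\Q^6$) is parametrized by a point of $\cp^3$ — matching $\hat\sE\cong S^5/(\pm1)$... — here I would instead argue more directly: from $M\overline M=-I$ and $M\tp=-M$, a short linear-algebra computation shows the entries of $M$ must satisfy precisely the three conjugation relations above, after which solving them recovers $M=M(\bfz)$ for a unique $\bfz$ with $\|\bfz\|^2 = \Pf M = 1$, hence $M\in\hat\sE$.

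I expect the main obstacle to be the reverse inclusion — specifically, showing that the apparently weaker conditions "$M$ unitary, skew-symmetric, $\Pf M=1$" force the rigid conjugation-symmetric form of \eqref{Mmat}. A naive entrywise approach works but is messy; the efficient route is to combine $M^{-1}=-\overline M$ with $M\tp=-M$ to get $M\overline M=-I$, note this is a $4\times4$ quaternionic-type structure, and then use $\det M=(\Pf M)^2=1$ to eliminate the remaining discrete ambiguity. Once the conjugation relation $\overline M=-\varepsilon M\varepsilon$ is established, reading off $\bfz$ and verifying $\|\bfz\|=1$ via the Pfaffian computation of the first paragraph is immediate, and the two inclusions together give the claimed equality $\hat\sE=\{M\in\SU(4)\cap\mathfrak{so}(4,\cc):\Pf M=1\}$.
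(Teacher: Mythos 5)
Your forward inclusion is fine and coincides with the paper's: skew-symmetry is built into \eqref{Mmat}, unitarity follows from Lemma~\ref{Clifford2}, $\det M(\bfz)=\|\bfz\|^4=1$, and the explicit formula $m_{12}m_{34}-m_{13}m_{24}+m_{14}m_{23}=\|\bfz\|^2$ gives $\Pf M(\bfz)=1$ when $\|\bfz\|=1$.

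The reverse inclusion, however, has a genuine gap. You reduce it to the claim that $M\tp=-M$ together with $M\overline M=-I$ already force the three conjugation relations $m_{12}=\overline{m_{34}}$, $m_{13}=-\overline{m_{24}}$, $m_{14}=\overline{m_{23}}$, with $\Pf M=1$ invoked only at the end to normalize $\|\bfz\|$. That claim is false: let $M'$ be obtained from $M(\bfz)$ (with $\|\bfz\|=1$) by reversing the signs of all entries of the lower-right $3\times3$ block. Then $M'$ is skew-symmetric and unitary (its columns are still orthonormal), hence satisfies $M'\overline{M'}=-I$, and $\det M'=1$; but $\Pf M'=-\|\bfz\|^2=-1$ and $M'$ satisfies your relations only with a global minus sign, so $M'\notin\sE$. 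Thus the hypothesis $\Pf M=1$ must enter the structural part of the argument---it is exactly what excludes this second family---and the ``short linear-algebra computation'' you defer to cannot exist as stated. This is precisely what the paper's proof supplies: reading a candidate $\bfz$ off the first column of $M$, unitarity and skew-symmetry force columns $2,3,4$ to agree with those of $M(\bfz)$ up to one common unimodular factor $\lambda$ with $\lambda^2=1$, and the choice $\lambda=-1$ is exactly the sign-flip of the lower $3\times3$ block, which reverses the Pfaffian; so $\Pf M=1$ pins down $M=M(\bfz)$. To complete your write-up you need this column analysis or a substitute (for instance, a connectedness argument exhibiting the compact $5$-manifold $\hat\sE\cong S^5$ as an open and closed subset of the connected $5$-manifold of skew-symmetric unitary matrices with Pfaffian $1$), not the unqualified implication you assert. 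Two minor slips in the paragraph you abandon: $\hat\sE\cong S^5$, not $S^5/\{\pm1\}$, since $\bfz$ is recovered from the first column of $M(\bfz)$; and $\sE$ is only a real-linear, not a complex-linear, subspace of $\mathfrak{so}(4,\cc)$.
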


\begin{proof} It is already clear that (\ref{Mmat})
  belongs to both $\SU(4)$ and $\mathfrak{so}(4,\cc)$, and a standard
  formula for the Pfaffian shows that $\Pf\Mz=1$. Suppose that
  $M\in \SU(4)\cap\mathfrak{so}(4,\cc)$.  Take the first column of $M$
  as in (\ref{Mmat}).  The second column must then coincide with that
  of (\ref{Mmat}), except that $\overline z^1$ and $-\overline z^2$ are
  possibility mutliplied by a complex number $\lambda$ of modulus 1.
  Analogous statements hold for columns 3 and 4 with the \emph{same}
  $\lambda$ that must satisfy $\lambda^2=1$. The only choice is to
  change all signs in the lower $3\times3$ block, and this reverses
  the sign of the Pfaffian.
\end{proof}

\noindent To sum up, $\sE$ is a cone over one component of the
intersection $\SU(4)\cap\mathfrak{so}(4,\cc)$.\smallbreak

Lemma~\ref{Clifford2} tells us that Clifford multiplication by $\bfz$
in (\ref{Cliffmult}) is represented by $\Mz$ on $\Delta_+$ and
by $\overline{\Mz}$ on $\Delta_-$. In this light, the next
result is a special case of Theorem~\ref{Clifford}, and consolidates
various arguments in the previous section.

\begin{theorem}
  If $[\bxi,\bfW]\in\Q^6$ and $\bxi\ne0$, then $\bfW = M(\bfz)\bxi$
  for some \emph{unique} $\bfz\in\cc^3$. The twistor projection
$\pi:\Q^6 \setminus\cp^3_\infty\to\rr^6$ is given by
$\pi( [\bxi,\,\bfW])=\bfz$, where
\begin{align}
\begin{split}
\label{zzz}
z^1 &= |\bxi|^{-2}\left(\bar{\xi}_0 W_1+\bar{\xi}_{23}W_{123} +
\xi_{13} \bar{W}_3 + \xi_{12} \bar{W}_2\right)\\ z^2 &= |\bxi|^{-2}
\left(\bar{\xi}_0 W_2 - \bar{\xi}_{13} W_{123} - \xi_{12} \bar{W}_1 +
\xi_{23} \bar{W}_3 \right)\\ z^3 &= |\bxi|^{-2} \left(\bar{\xi}_0 W_3+
\bar{\xi}_{12} W_{123} - \xi_{23} \bar{W}_2 - \xi_{13}
\bar{W}_1\right).
\end{split}
\end{align}
\end{theorem}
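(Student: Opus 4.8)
The plan is to read both assertions off the Clifford-algebra setup already in place, the explicit formula \eqref{zzz} being the one step that requires a computation. For the first sentence of the theorem I would invoke Theorem~\ref{Clifford} in the case $n=3$: since $[\bxi,\bfW]\in\Q^6=\Z4$ with $\bxi\neq0$, that result says $\bxi$ is a pure spinor (automatic here, as $\Z3=\cp^3$) and $\bfW=\vv\cdot\bxi$ for a unique $\vv\in\rr^6$. Under the identification $\rr^6=\cc^3$ together with the matrix form of Clifford multiplication on $\Delta_+$ recorded just after Lemma~\ref{Clifford2}, the vector $\vv$ is exactly $\bfz$ and $\bfW=M(\bfz)\bxi$. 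Uniqueness can also be obtained directly: $\bfz\mapsto M(\bfz)$ is $\rr$-linear, so $M(\bfz)\bxi=M(\bfz')\bxi$ gives $M(\bfz-\bfz')\bxi=0$, and $\det M(\bfw)=\|\bfw\|^4$ by \eqref{detM} makes $M(\bfw)$ invertible for every $\bfw\neq 0$; hence $\bfz=\bfz'$. If one prefers not to cite Theorem~\ref{Clifford}, existence follows just as easily: $\bfz\mapsto M(\bfz)\bxi$ is an injective $\rr$-linear map from $\sE\cong\rr^6$ whose image lies in $\{\bfW:\bxi\cdot\bfW=0\}$ by skew-symmetry of $M(\bfz)$, and since the latter is a $6$-real-dimensional space which, by \eqref{dotproduct}, is precisely the set of $\bfW$ with $[\bxi,\bfW]\in\Q^6$, the map is onto it.

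The identity $\pi([\bxi,\bfW])=\bfz$ is then just the description of the twistor projection in these coordinates: by \eqref{coordcp7} and Theorem~\ref{BIHOLO}, the biholomorphism $\FFF$ sends $([\bxi],\bfz)$ to $[\bxi,M(\bfz)\bxi]$, and $\pi$ is $\FFF$ followed by projection onto the $\rr^6$-factor, so $\pi([\bxi,\bfW])$ is the very $\bfz$ just produced.

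It remains to establish the closed form \eqref{zzz}, which amounts to solving the four scalar equations \eqref{Wi6}, \eqref{W123} (i.e.\ $\bfW=M(\bfz)\bxi$) for $(z^1,z^2,z^3)$. The claim is that the right-hand sides of \eqref{zzz} do this, and I would verify it by substitution: plugging \eqref{Wi6}, \eqref{W123} into, say, the $z^1$ line, the coefficient of $z^1$ collapses to $|\xi_0|^2+|\xi_{12}|^2+|\xi_{13}|^2+|\xi_{23}|^2=|\bxi|^2$, while every monomial in $z^2,z^3,\bar z^2,\bar z^3$ occurs twice with opposite sign and so cancels; dividing by $|\bxi|^2$ yields the stated expression for $z^1$, and $z^2,z^3$ are identical in form. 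This substitution check is the only genuinely computational point and is the main (if modest) obstacle; it is routine, the one real care being the sign and ordering conventions fixed in \eqref{4vec}, so I would present the $z^1$ case in full and leave the other two to the reader. I would then note two book-keeping facts: \eqref{zzz} is unchanged under $(\bxi,\bfW)\mapsto(\lambda\bxi,\lambda\bfW)$, since numerator and denominator each scale by $|\lambda|^2$, so it descends to a well-defined map on $\Q^6\setminus\cp^3_\infty$; and, in contrast to the holomorphic coordinates on $\Z4$, it is manifestly not holomorphic, as it must be, the fibration $\Q^6\to S^6$ being the ``real'' twistor projection.
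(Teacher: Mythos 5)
Your proposal is correct and follows essentially the same route as the paper: uniqueness via linearity of $\bfz\mapsto M(\bfz)$ and the identity $\det M(\bfw)=\|\bfw\|^4$, existence from the Clifford-multiplication setup (the paper cites the argument of Theorem~\ref{Clifford}, while your self-contained dimension count using $\bxi\cdot\bfW=0$ is an equally valid minor variant), and the formula \eqref{zzz} verified by exactly the cancellation computation the paper performs by multiplying \eqref{Wi6} and \eqref{W123} by the appropriate conjugated coefficients and summing. The only nitpick is wording: $\pi$ is $\FFF^{-1}$ followed by projection to the $\rr^6$ factor, not $\FFF$ itself.
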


\begin{proof} Uniqueness essentially follows from Lemma~\ref{Clifford2}.
More directly, if $\Mz\bxi=M(\bfz')\bxi$, then
$(\Mz-M(\bfz'))\bxi=0$. Assuming $\bxi\ne0$, we obtain
\begin{align}
\label{!ness} 
\det(\Mz-M(\bfz)')=\|\bfz-\bfz'\|^4,
\end{align} 
from \eqref{detM}.

We have already observed that $[\bxi,\Mz\bxi]\in\Q^6$ provided
$\bxi\ne0$, and it induces an injective mapping
$f:\cp^3\times\sE\to\Q^6$. As in the proof of Theorem~\ref{Clifford},
we may conclude that the closure of the image of $f$ is obtained by
adding a copy of $\cp^3$ corresponding to points $[0,\bfW]$ generating
the fiber $\pi^{-1}(\infty)=\cp^3_{\infty}$. In any case, given
$[\bxi,\,\bfW]\in\Q^6$ with $\bxi\ne0$, the existence of $\Mz$ is now
guaranteed.

To establish the first equation of (\ref{zzz}), we proceed as follows.
Multiplying the equations in (\ref{Wi6}) and (\ref{W123}) by the
appropriate coefficients, we obtain
\begin{align*}
\begin{split}
\xi_{12} \bar{W}_2 &= \bar{\xi}_0 \xi_{12} \bz^2 
+ |{\xi}_{12}|^2 z^1 - \xi_{12} \bar{\xi}_{23} z^3\\
\xi_{13} \bar{W}_3 &= \bar{\xi}_0 \xi_{13} \bz^3 
+ |{\xi}_{13}|^2 z^1 + \xi_{13} \bar{\xi}_{23} z^2\\
\bar{\xi}_0 W_1 &= |\xi_0|^2 z^1 
- \bar{\xi}_0 \xi_{12} \bz^2 - \bar{\xi_0} \xi_{13} \bz^3\\
\bar{\xi}_{23} W_{123} &= |\xi_{23}|^2 z^1 
- \xi_{13} \bar{\xi}_{23} z^2 + \xi_{12} \bar{\xi}_{23} z^3. 
\end{split}
\end{align*}
Adding these four equations gives the required result:
\begin{align*} 
\xi_{12} \bar{W}_2 + \xi_{13} \bar{W}_3 + \bar{\xi}_0 W_1
+ \bar{\xi}_{23} W_{123} = |\bxi|^2 z^1.
\end{align*}
Given the cyclic symmetry in the components of $\bxi$ and
$\bfW$ in (\ref{zzz}), the second and third equations must also hold.

The fact that the $\bfz$ defines the twistor projection is a
consequence of the theory developed in the previous section. We leave
the reader to double check that, having defined $\bfz=(z^1,z^2,z^3)$
by (\ref{zzz}), it is indeed true that $\bfW=\Mz\bxi$.
\end{proof}

\begin{remark}{\em The component $\sE^-$ of matrices in
  $\rr^+\times(\SU(4)\cap\mathfrak{so}(4,\cc))$ with negative Pfaffian 
  parametrizes a different set of vertical $\cp^3$-s described in
  Subsection~\ref{linears}. If $L(\bfy)\in\sE^-$ is the matrix with
  first row $(0,-y^3,-y^2,-y^1)$
  then \[[\bzt,\,L(\bfy)\bzt]=[\bxi,\,M(\bfz)\bxi]\ \Rightarrow\ 
  (L(\bfy)-\Mz)\bxi=0,\] which implies that
\[ 0=\det(L(\bfy)-\Mz)=(\|\bfy\|^2-\|\bfz\|^2) + 
2i\,\mathfrak{Im}\left<\bfy,\bfz\right>.\] If we fix a non-zero vector
$\bfy\in\cc^3$ then the set of $\bfz$ solving this equation is the
intersection of an $S^5$ with a real hyperplane. The corresponding
$\cp^3$ is the twistor space (with fiber $\cp^1$) of this $S^4$
consisting of a collection of points equidistant from the chosen
origin in $S^6$.}
\end{remark}

\subsection{The conformal group}
\label{conformalgroup}
We continue to use the coordinates (\ref{coordcp7}) on $\cp^7$, and
consider first the automorphism group of the quadric $\Q^6$, as
defined by (\ref{dotproduct}). Up to a finite ambiguity, this is
isomorphic to the matrix group $\SO(8,\cc)$, but is defined in our
context by
\begin{align}
\label{autQ}
\left\{X=\mat{A&B\\C&D} \>:\> X\tp\mat{0&I\\I&0}X
= \mat{0&I\\I&0}\right\},
\end{align}
which amounts to asserting that 
\begin{align}\label{antisym}
A\tp C+C\tp A=0,\qquad B\tp D+D\tp B=0,
\end{align}
and 
\begin{align}\label{ADCB}
A\tp D+C\tp B=I.
\end{align}
We shall use these relations shortly.

The double cover of the orientation-preserving conformal group, 
$\Spin_\circ(1,7)$ (identity component), turns out to be 
exactly the subgroup
of (\ref{autQ}) consisting of matrices that preserve the twistor
fibration. Fix a matrix $M$ in this subgroup, built up from the
$4\times4$ blocks $A,B,C,D$. Given $M=\Mz\in\sE$, we therefore require
that there exists a corresponding $M'=M(\bfz')$ with the following
property. For each $\bxi\ne0$, there exists $\bxi'$ such that
\begin{align}\label{xZx}
\mat{A&B\\C&D}\col{\bxi\\M\bxi}=\col{\bxi'\\M'\bxi'}.
\end{align}
It follows that 
\begin{align}
\label{ABZ}
C+DM=M'(A+BM),
\end{align}
provided that $A+BM\ne0$.

We now list some special subgroups.
\begin{enumerate}

\item[(i)] If we take $B=C=0$ then $A\tp D=I$, and (\ref{ABZ}) implies
  that
\begin{align*}
M\in\sE\ \Rightarrow\ DMD\tp\in\sE.
\end{align*}
In particular, we can take $D=\overline A\in \SU(4)$, in which case
\begin{align}\label{AbarA}
X=\left(
\begin{matrix} 
A  & 0  \\
0  &  \overline A \\
\end{matrix}
\right).
\end{align}

\item[(ii)] Again with $B=C=0$, we can take $D=rI$ with $r\in \rr^+$,
so that
\begin{align}\label{rI}
X = \left(
\begin{matrix} 
r^{-1} I  & 0  \\
0  &   r I \\
\end{matrix}
\right).
\end{align} 

\item[(iii)] Now suppose that $B=0$ and $A=I$. Then $D=I$ and 
\begin{align}\label{IC}
X =\left(
\begin{matrix}
I  & 0 \\
C & I \\
\end{matrix}
\right),
\end{align}
with $C \in\sE$.

\item[(iv)] Consider a special case in which $A=0=D$, namely
\begin{align}\label{0II0}
X = \left(
\begin{matrix} 
0  &  I \\
I  &  0  \\
\end{matrix}
\right).
\end{align}
This is the symmetric matrix that defines the quadric itself.
\end{enumerate}
\smallbreak

We bring these example together with

\begin{proposition}
\label{confgroup}
  The orientation-preserving conformal group $\SO_{\circ}(1,7)$ is
  generated by matrices from the previous four cases. Moreover,
\begin{enumerate}
\item[(i)] corresponds to the
  group $\SO(6)$ of rotations fixing $0$ and $\infty$,
\item[(ii)]
  arises from the scaling $(z^1, z^2, z^3)\mapsto (r^2 z^1, r^2 z^2,
  r^2 z^3)$,
\item[(iii)] corresponds to the translation $( z^1, z^2,
  z^3)\mapsto(z^1-c^1, z^2-c^2, z^3-c^3)$,
\item[(iv)] arises from
  inversion in the unit sphere and minus conjugation:
\begin{align}
\label{invmap}  
(z^1,z^2,z^3)\rightarrow -\|\bfz\|^{-2}(\bz^1, \bz^2, \bz^3). 
\end{align}
\end{enumerate}
\end{proposition}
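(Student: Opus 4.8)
The plan is to verify each of the four correspondences by a direct computation in the coordinates $[\bxi,\bfW]$ established in this section, using the defining relation $\bfW = M(\bfz)\bxi$ and the twistor projection formula \eqref{zzz}. First I would treat case (iii), the translations, since it is the cleanest: with $X$ as in \eqref{IC} and $C = M(\bfc)\in\sE$, equation \eqref{ABZ} reads $M(\bfc) + M(\bfz) = M'(I)$, so $M' = M(\bfc) + M(\bfz) = M(\bfz + \bfc)$ by linearity of $\sE$ (noted after \eqref{Mmat}). Hence the new point is $[\bxi, M(\bfz+\bfc)\bxi]$, and by the uniqueness statement in the theorem above its twistor projection is $\bfz + \bfc$; choosing the sign convention so that this is $\bfz - \bfc$ matches \eqref{invmap}'s companion, i.e. the map $\bfz \mapsto \bfz - \bfc$. (The sign is a bookkeeping matter depending on whether $C = M(\bfc)$ or $M(-\bfc)$, which I will fix to agree with the statement.)

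Next I would do cases (i) and (ii) together, since both have $B = C = 0$ and \eqref{ABZ} gives $M' = DMA^{-1}$, equivalently (using $A\tp D = I$, so $A^{-1} = D\tp$) $M' = DMD\tp$. For (ii), $D = rI$ yields $M(\bfz) \mapsto r^2 M(\bfz) = M(r^2\bfz)$, so $\bfz \mapsto r^2\bfz$, exactly the scaling claimed. For (i), $D = \overline A \in \SU(4)$ and $M \mapsto \overline A M A\tp$; I need to identify this with an $\SO(6)$ rotation on $\bfz$. The key input is Lemma~\ref{Clifford2}: the map $\bfz \mapsto M(\bfz)$ is (up to the factor in \eqref{zw+zw}) a Clifford-module isomorphism $\rr^6 \to \sE \subset \mathfrak{so}(4,\cc)$, intertwining the real inner product on $\cc^3$ with the symmetrized product of matrices. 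Conjugation $M \mapsto \overline A M A\tp$ preserves $\sE$ (this is the content of (i) in the list preceding the proposition) and preserves the bilinear form by the defining relations \eqref{antisym}--\eqref{ADCB} of the quadric automorphism group, hence it descends to an isometry of $(\rr^6,\geu)$ fixing $0$; since it also fixes $\infty$ (the fiber $\cp^3_\infty$ being preserved because $\bxi' = A\bxi$ vanishes iff $\bxi$ does), it lies in $\SO(6)$. A dimension count — $\SU(4)$ has dimension $15 = \dim \SO(6)$, and the homomorphism $A \mapsto (M\mapsto \overline AMA\tp)$ has finite kernel (scalars $\lambda I$ with $\lambda^4 = 1$) — shows the image is all of $\SO(6)$.

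For case (iv), $X = \begin{pmatrix} 0 & I \\ I & 0 \end{pmatrix}$, equation \eqref{xZx} swaps $\bxi$ and $M(\bfz)\bxi$: the image point is $[M(\bfz)\bxi, \bxi]$. To read off the new base point I would write this as $[\bxi'', M''\bxi'']$ with $\bxi'' = M(\bfz)\bxi$, and solve $M''\bxi'' = \bxi$, i.e. $M'' M(\bfz)\bxi = \bxi$. Using \eqref{zw+zw} with $\bfy = \bfz$, namely $\overline{M(\bfz)}M(\bfz) = \|\bfz\|^2 I$, together with the skew-symmetry $M(\bfz)\tp = -M(\bfz)$ and the structure of $\sE$, one computes $M(\bfz)^{-1} = -\|\bfz\|^{-2}\overline{M(\bfz)}$; and a direct check against \eqref{Mmat} identifies $\overline{M(\bfz)}$ with $M(\overline{\bfz})$ up to the sign that sends $\sE$ to $\sE^-$ (the computation in the final Remark above), so that $M'' = M(-\|\bfz\|^{-2}\overline{\bfz})$, giving the base point $-\|\bfz\|^{-2}(\bz^1,\bz^2,\bz^3)$ as in \eqref{invmap}. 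Finally, the generation statement: the subgroups (i)--(iv) already contain rotations about $0$, dilations, translations, and one inversion, and these classically generate the full conformal (M\"obius) group of $S^6$, which is $\SO_\circ(1,7)$; since each listed matrix visibly lies in \eqref{autQ} and preserves the fibration, and the identity component of the conformal group is connected and has the right dimension $28 = \dim\SO(8)/\!\sim$ minus the fiber directions, matching, the subgroup they generate is all of $\SO_\circ(1,7)$. I expect the main obstacle to be case (iv): pinning down the exact matrix $M''$ — in particular getting the conjugation, the sign, and the reciprocal norm all correct simultaneously — requires carefully combining Lemma~\ref{Clifford2}, the $\sE$ versus $\sE^-$ distinction, and the determinant formula \eqref{detM}, and is the one spot where a sign error would change the geometric conclusion.
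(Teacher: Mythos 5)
Your proposal is correct in substance and uses the same ingredients as the paper (the four-case decomposition, Lemma~\ref{Clifford2}, the determinant formula \eqref{detM}, the structure of $\sE$, and the classical fact that rotations, dilations, translations and an inversion generate the conformal group), but you run the verification in the opposite direction: you start from the matrices \eqref{AbarA}--\eqref{0II0} and use \eqref{ABZ} together with the uniqueness of $M'$ to read off the induced map on the base, whereas the paper starts from each conformal generator $T$, computes its action on the $1$-forms via $\bet=M(d\bfz)\kern2pt\bxi$, chooses $\bxi'$ accordingly, and then reads off the matrix through $\FFF$. What the paper's direction buys is that the fiber component $\bxi'$ is by construction the canonical twistor lift of $T$ (conjugation of the OCS by $T_*$), which is what is actually used later when the conformal group acts on graphs of OCSes; in your direction this is left implicit (it is in fact forced, since a fibration-preserving automorphism of $\Q^6$ covering a given conformal map of $S^6$ is unique, but that deserves a remark). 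Two small repairs: in case (i) the conjugation coming from \eqref{ABZ} is $M\mapsto DMD\tp=\overline A\,M\,\overline A\tp$ (not $\overline A M A\tp$), which is exactly \eqref{DMD}; and in case (iv) the check you defer comes out cleanly, since inspection of \eqref{Mmat} gives $\overline{\Mz}=M(\overline\bfz)$ exactly, with no $\sE^-$ sign involved, so that Lemma~\ref{Clifford2} yields $\Mz^{-1}=-\|\bfz\|^{-2}M(\overline\bfz)$ and the base map is \eqref{invmap} as you state.
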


\begin{proof}
Take a point $[\bxi]\in\cp^3$ giving rise to the linear OCS on
$\rr^6$ whose $(1,0)$ forms are spanned by (\ref{etai6}) and
(\ref{etaiii}). These four equations translate into the formula
\[ \bet=M(d\bfz)\kern2pt\bxi\]
for the vector-valued 1-form $\bet$. 
To find the action of a conformal map $\bfz\mapsto \bfz'=T\circ\bfz$
we therefore need to compute $M(d\bfz')=M(T^*d\bfz)$ and define
$\bxi'$ accordingly. The induced action
\begin{align}\label{xiz}
   ([\bxi],\,\bfz)\mapsto([\bxi'],\,\bfz')
\end{align}
on twistor space can then be converted into a matrix relative to the
coordinate system (\ref{coordcp7}) using the diffeomorphism $\FFF$.
Roughly speaking, the latter replaces $\bfz$ in (\ref{xiz}) by
$\bfW=\Mz\bxi$, so that
\begin{align*}
(\FFF \circ T \circ \FFF^{-1})[\bxi,\,\bfW]
= \FFF([\bxi'],\,\bfz')
= [ \bxi',\,\bfW],
\end{align*}
where $\bfW'=M(\bfz')\bxi'$.

To tackle case (i), we work backwards. Let $A=\overline D\in \SU(4)$,
and define $\bfz'$ by
\begin{align}\label{DMD}
   M(\bfz')=D\Mz D\tp.
\end{align}
It follows from (\ref{detM}) that $\bfz\mapsto\bfz'$ is an
\emph{orthogonal} transformation of $\rr^6$, and it must lie in
$\SO(6)$ since $\SU(4)$ is connected. In this way, (\ref{DMD}) neatly
expresses the double covering \[\SU(4)\cong\Spin(6)\to
\SO(6).\] It is appropriate here to set $\bxi'=A\bxi$, so that
\[ \bet'=M(d\bfz')\bxi'=D\Mz\bxi=D\bet,\]
ensuring that the new 1-forms are linear combinations of the old ones.
Then
\begin{align*}
(\FFF \circ T \circ \FFF^{-1})[\bxi,\,\bfW] 
= [A\bxi,\,M(\bfz')A\bxi]
= [A\bxi,\,\overline A\bfW],
\end{align*}
and the matrix is (\ref{AbarA}).

Next consider the dilation $z^i\mapsto r^2 z^i$, for $r\in\rr_+$.
The 1-forms simply scale, so we may take $\bet'=\bet$.
Thus,
\begin{align*}
(\FFF \circ T \circ \FFF^{-1})[\bxi,\,\bfW]
= \FFF([\bxi],\,r^2\bfz)
= [ \bxi,\, r^2\bxi]
= [r^{-1}\bxi,\,r\bfW],
\end{align*} 
and after the projective scaling, we recover the matrix (\ref{rI})
in $\SO(8,\cc)$.

For translations, consider $z^i\mapsto z^i-c^i$ for $c^i \in\cc$.
Clearly the 1-forms are unchanged, so $\bxi'=\bxi$ whereas
\begin{align}
\label{transl}
   \bfz'=\bfz-\mathbf{c}
\end{align}
withe $\mathbf{c}=(c^1,c^2,c^3)$. Moreover,
\begin{align*}
(\FFF \circ T \circ \FFF^{-1})[\bxi,\,\bfW] 
= [\bxi,\,M(\bfz-\mathbf{c})\bxi] 
= [\bxi,\, \bfW - C\bxi],
\end{align*}
where $C\in\sE$. Thus, the lift is given by (\ref{IC}) with $-C$ in
place of $C$.

Inversion is defined by the mapping
 \begin{align*}  
   \bfz' = -\|\bfz\|^{-2}\overline\bfz.
\end{align*}
This is faithfully reflected (up to sign) by the inverse of
the matrix (\ref{Mmat}), since
\[ \Mz^{-1} = -\|\bfz\|^{-2}M(\overline\bfz) = M(\bfz'),\]
using the various properties of $\sE$. We may now define
$\bxi'=M(\bfz)\bxi=\bfW$ so that
\begin{align*}
(\FFF \circ T \circ \FFF^{-1})[\bxi,\,\bfW]
= \FFF([\bfW],\,\bfz']
= [\bfW,\,\Mz^{-1}\bfW]
= [\bfW,\,\bxi],
\end{align*}
and we obtain (\ref{0II0}).

It is well-known that the conformal group is generated by rotations,
dilations, translations and inversions \cite{SchoenYau}. 
Thus, the proof of the proposition is complete.\end{proof}

\section{Threefolds in the 6-quadric}
\label{finalsec}

We assume $J$ is an OCS defined on $S^6 \setminus K$, where $K$ is a
finite non-empty set of points. By the integrability assumption on
$J$, the graph $J(S^6 \setminus K)$ is a \emph{complex}
$3$-dimensional submanifold of $Q^6$. Moreover, it is a submanifold of
$\cp^7$ minus a subvariety consisting of finitely many vertical
$\cp^3$-s. We may therefore apply Bishop's theorem to conclude that if
the graph of $J$ has finite area then the closure is also a variety
\cite{Bishop}. The finite area condition is that
\begin{align}
\label{h62i}
\mathcal{H}^{6}( J(\rr^6 \setminus K)) < \infty,
\end{align}
where $\mathcal{H}^6$ denotes real $6$-dimensional Hausdorff measure. 

We shall see that (\ref{h62i}) is in fact implied by the finite energy
assumption (\ref{finiteenergy}). In order to compute the latter, we
endow $\Q^6$ with a ``twistor'' metric as follows. In accordance with
(\ref{VH}), the tangent space
\begin{align}
\label{VQH}
T_q(\Q^6)=V_q\oplus H_q
\end{align}
splits into the tangent space of the fiber over $z = \pi(q)$ and the
horizontal subspace determined by the Levi-Civita connection. Give
$V_q$ the Fubini--Study metric of the fiber $\cp^3$, and $H_q$ the
round metric from $S^6$. It is known that, for an appropriate choice
of scaling, this is exactly the Hermitian symmetric metric on $\Q^6$
\cite{Barbosa}. We will use this metric on $\Q^6$ to compute areas,
though any Riemannian metric constructed in a similar way would
suffice to prove the next result.

\begin{proposition}
  Let $J$ be an OCS on $S^6 \setminus K$, where $K$ is a finite set of
  points. If $J$ satisfies {\rm(\ref{finiteenergy})}, then
  {\rm(\ref{h62i})} is satisfied.
\end{proposition}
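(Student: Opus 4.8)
The plan is to express the $6$-dimensional Hausdorff measure of the graph $J(\rr^6\setminus K)$ as an integral over $\rr^6\setminus K$ of a Jacobian factor, and then to bound that factor pointwise by a constant multiple of $\|\nabla J\|^6$ plus lower-order terms that are already integrable. First I would set up the graph map $\Gamma:\rr^6\setminus K\to\Q^6$, $\mathbf{x}\mapsto (J(\mathbf{x}),\mathbf{x})$, which is an embedding whose image is the graph. Since the twistor metric on $\Q^6$ restricts on the horizontal distribution $H_q$ to the round metric on $S^6$ and on the vertical distribution $V_q$ to the Fubini--Study metric on the fiber $\cp^3$, the differential $d\Gamma$ at $\mathbf{x}$ decomposes (relative to the splitting \eqref{VQH}) as the identity into $H_q$ plus the vertical component, which is exactly the covariant derivative $\nabla J$ viewed as a map $T_{\mathbf{x}}S^6\to V_q\cong T_J(\Z3)$. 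By the area formula, $\mathcal{H}^6(J(\rr^6\setminus K))=\int_{\rr^6\setminus K}\mathrm{Jac}(\Gamma)\,dV$, where $\mathrm{Jac}(\Gamma)=\sqrt{\det(I+(\nabla J)^{*}(\nabla J))}$ with the adjoint taken with respect to the round metric; all metric quantities here are the round-sphere ones, as in the definition of finite energy.

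Next I would bound the integrand. The matrix $(\nabla J)^*(\nabla J)$ is positive semidefinite of size $6$, so $\det(I+(\nabla J)^*(\nabla J))=\prod_{i=1}^6(1+\lambda_i)$ where $\lambda_i\ge0$ are its eigenvalues; expanding the product gives $1+\sum_i\lambda_i+\cdots+\prod_i\lambda_i$, a sum of the elementary symmetric polynomials in the $\lambda_i$. Each elementary symmetric polynomial $\sigma_k(\lambda)$ is bounded by $C_k\,(\sum_i\lambda_i)^k=C_k\,\|\nabla J\|^{2k}$, so $\mathrm{Jac}(\Gamma)\le\sqrt{1+C_1\|\nabla J\|^2+\cdots+C_6\|\nabla J\|^{12}}\le C(1+\|\nabla J\|^6)$, using $\|\nabla J\|^{2k}\le\max(1,\|\nabla J\|^6)^{\text{something}}$—more carefully, $\sqrt{1+\sum_{k=1}^6 C_k t^{2k}}\le C(1+t^6)$ for all $t\ge0$ by comparing the highest-degree term and using that on any bounded interval the left side is bounded. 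Therefore
\begin{align*}
\mathcal{H}^6(J(\rr^6\setminus K))=\int_{\rr^6\setminus K}\mathrm{Jac}(\Gamma)\,dV
\le C\int_{S^6\setminus K}\big(1+\|\nabla J\|^6\big)\,dV
= C\,\mathrm{Vol}(S^6)+C\int_{S^6\setminus K}\|\nabla J\|^6\,dV,
\end{align*}
which is finite because the total volume of $S^6$ is finite and the last integral is finite by the finite energy hypothesis \eqref{finiteenergy}. This establishes \eqref{h62i}.

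The one point that needs care—and which I expect to be the main obstacle to writing cleanly rather than a genuine difficulty—is the identification of the vertical part of $d\Gamma$ with $\nabla J$ and the verification that the resulting Jacobian is computed using precisely the round metric norms that appear in \eqref{finiteenergy}. This comes down to the construction of the twistor metric: the splitting \eqref{VQH} is by definition the one coming from the Levi-Civita connection on $S^6$, so a horizontal curve through $q=(J,\mathbf{x})$ projecting to a geodesic in $S^6$ has zero vertical velocity, and the vertical velocity of the graph curve $t\mapsto(J(\gamma(t)),\gamma(t))$ is exactly $\nabla_{\dot\gamma}J\in T_J(\Z3)\cong V_q$. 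Since the Fubini--Study metric on the fiber $\cp^3$ is (up to the fixed scaling making the total metric Hermitian symmetric) the metric in which $\|\nabla J\|$ is measured in \eqref{finiteenergy}, the Jacobian is indeed $\sqrt{\det(I+(\nabla J)^*\nabla J)}$ with all norms the round/twistor ones, and the estimate above applies verbatim. One should also note that $\mathcal{H}^6$ of the graph equals $\mathcal{H}^6$ computed with the twistor metric since the graph is a smooth submanifold and the twistor metric restricts to a Riemannian metric on it; the final remark that this also controls the Euclidean area \eqref{h62i} follows because away from $K$ the round and Euclidean metrics on $\rr^6$ are conformally equivalent with locally bounded conformal factor, and near points of $K$ (including $\infty$) the finite energy assumption already localizes the estimate.
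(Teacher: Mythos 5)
Your argument is essentially the paper's own proof: you parametrize the graph by $\Gamma(\mathbf{x})=(J(\mathbf{x}),\mathbf{x})$, identify the vertical part of $d\Gamma$ with $\nabla J$ via the Levi-Civita splitting \eqref{VQH}, apply the area formula to get $\int\sqrt{\det(I+(\nabla J)^{*}\nabla J)}\,dV$, and bound the integrand by $C(1+\|\nabla J\|^{6})$, exactly as in the paper (which states the bound as $\det(D\tp D)\le c(1+\|\nabla J\|^{12})$). The closing aside about ``Euclidean area'' is unnecessary---\eqref{h62i} is the Hausdorff measure of the graph inside the compact $\Q^6\subset\cp^7$, where all smooth metrics are bi-Lipschitz equivalent---but this does not affect the correctness of your proof.
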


\begin{proof}
  Fix $x\in S^6$, and set $q=J(x)\in Q^6$. Let $(J_*)_x$ denote the
  differential of the smooth mapping $J\colon S^6\setminus K\to Q^6$
  at $x$. When we identify $H_q$ with $T_xS^6$ and $V_q$ with a
  subspace of $\Ext^2T_xS^6$ (as in (\ref{L20})), we may write
\[ (J_*)_x(v)=(\nabla_vJ,\>v),\qquad v\in T_xS^6.\] This is because, by its
very definition, the vertical component of (\ref{VQH}) can be
identified with the covariant derivative of $J$. The linear mapping
$(J_*)_x$ is now represented by a $12\times6$ matrix of the form
\[   D_x =
\left(
\begin{matrix}
\nabla J\\
I\\
\end{matrix}
\right),
\]
where $I$ is the $6\times6$ identity.

If we set $\Omega=S^6\setminus K$, it follows that
\[
\mathcal{H}^{6}( J(\Omega))
= \int_{\Omega}\!\sqrt{\det{D\tp D}}\>dx.
\]
This is essentially a version of the area formula
\cite{Federer,EvansGariepy}, also familiar from the classical theory
of surfaces in $\rr^n$ in which the role of $D$ is played by the
$n\times2$ matrix whose columns are the partial derivatives
$\mathbf{x}_u,\mathbf{x}_v$ and $\det(D\tp D)=EG-F^2$.

The present result now follows from the estimate
\[ 
\det(D\tp D) = \det\left(I + (\nabla J)\tp(\nabla J)\right)
\le c(1+ \|\nabla J\|^{12}),
\]
where $c$ is a universal constant. At each point $x\in S^6$, the
transpose $(\nabla J)\tp$ can be interpreted as the adjoint of a
linear transformation $T_xS^6\to\Ext^2T_xS^6$ with respect to the
natural inner products on these spaces, which are also used to define
the norm $\|\nabla J\|$. This interpretation is consistent with the
choice of Riemannian metric on $Q^6$ defined via (\ref{VQH}).
\end{proof}

In attempting to prove Theorem~\ref{t2}, we may now assume that the
closure of the graph of $J$, namely $\overline{J(\rr^6\setminus K)}$,
is an analytic variety. By Chow's Theorem, we may further assume that
it is an \emph{algebraic} subvariety of the quadric $\Q^6$
\cite{Chow}; it has complex dimension 3 and bidegree $(1,p)$, for some
integer $p\ge0$.

Now suppose that $\X$ is an arbitrary algebraic threefold (an
algebraic 3-dimensional subvariety) of $Q^6$ of bidegree $(1,p)$. We
will call a twistor fiber \emph{exceptional} if the intersection of
the fiber with $\X$ consists of more than one point.
\begin{remark}
\label{Ful} 
{\em
Given a twistor fiber $F$, suppose that the intersection $X\cap F$ is
a finite number of points. Then we have
\begin{align*}
1 = X \cdot F = \sum_{z \in X \cap F} {\rm{mult}}_z (X,F).
\end{align*}
This implies that $X\cap F$ must consist of exactly one point $z$.
Furthermore, $X$ must be smooth at $z$ and $X$ intersects $F$
transversely at $z$ \cite[Proposition 8.2 (a),(c)]{Fulton}.}
\end{remark}
A point $p \in S^6$ will be called \emph{exceptional} if the
fiber over $p$ is exceptional. It now remains to prove the
  following result that is a re-statement of Theorem~\ref{texc}.

\begin{theorem}
\label{exct}
The space of exceptional fibers of $\X \rightarrow S^6$ has real
dimension at least two, unless $\X$ is conformally equivalent to the
closure of the graph of a \wps defined on $\rr^6$.
\end{theorem}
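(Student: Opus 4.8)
\emph{Setup and a soft reduction.} Write $E\subset S^6$ for the set of exceptional points; it is a closed, indeed real-analytic, subset, so its dimension is well defined. Since $\X\cdot F=1$ for every twistor fibre $F$ (the ``$1$'' in the bidegree), positivity of intersection multiplicities (cf.\ Remark~\ref{Ful}) shows that $\X\cap F$ is a single transverse point whenever it is finite, and otherwise a single connected positive-dimensional component; moreover that component must be a linear $\cp^1$ or $\cp^2$ inside the fibre $\cp^3$, higher degree being incompatible with $\X\cdot F=1$. Hence $\X$ minus its graph part lies entirely over $E$, and $\pi$ restricts to a real-analytic diffeomorphism from that graph part onto $S^6\setminus E$, exhibiting $\X=\overline{J(S^6\setminus E)}$ for an OCS $J$ on $S^6\setminus E$. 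Since $S^6$ carries no global OCS \cite{LeBrunS6}, we have $E\ne\emptyset$. It then remains to prove: if $\dim_\rr E\le 1$ then $\X$ is conformally equivalent to the closure of the graph of a warped product on $\rr^6$.

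\emph{The classification.} The engine of the proof is the classification of type-$(1,p)$ threefolds in $\Q^6$ obtained in \cite{BV}. I expect it to reduce any such $\X$ to rigid geometric data, with the dichotomy relevant here being the following: either $\X$ is swept out by a \emph{pencil} of planes $\cp^2\subset\Q^6$ (each, by Lemma~\ref{cp2}, a partial section of the twistor fibration over a round $S^4\subset S^6$) whose associated pencil of $4$-spheres has a common point --- in which case $\X$ is a warped-product threefold of the kind built in Section~\ref{warped} --- or else $\X$ is of one of the remaining types, for each of which one reads off directly that the exceptional locus has real dimension at least two. For instance, if $\X$ is swept out by planes over a pencil of $S^4$'s with \emph{no} common point, or by planes all lying in the twistor space of a single $S^4\subset S^6$, then already the union of the $\cp^1$-parts of those planes sits over a real $2$-dimensional subset of $S^6$. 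Granting this, the hypothesis $\dim_\rr E\le 1$ forces $\X$ to be of the ``common point'' type, and in particular $E$ is then finite.

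\emph{Identifying the warped case.} For the surviving type, use the conformal group (Proposition~\ref{confgroup}) to move the common point of the pencil of $4$-spheres to $\infty=S^6\setminus\rr^6$ and, after a rotation in $\SO(6)$, to place the pencil of planes in the linear subspace $\{\xi_{13}=\xi_{23}=0\}$. The quadric equation $\xi_0 W_{123}=\xi_{12}W_3-\xi_{13}W_2+\xi_{23}W_1$ then collapses to $\xi_0 W_{123}=\xi_{12}W_3$, and the single remaining defining equation of $\X$ inside $\cp^5\cap\Q^6$ expresses $\xi_{12}/\xi_0$ as a rational function $f$ of $z^3=W_3/\xi_0$. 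Comparing with \eqref{phie} identifies $\X$ with $\overline{J(\rr^6)}$ for the warped product $J=J_1(z^3)\oplus J_0$ determined by $f$, whose integrability is guaranteed by Proposition~\ref{wpint}. This completes the argument.

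\emph{Main obstacle.} The crux is the case analysis against the \cite{BV} list: one must isolate from that classification the precise intrinsic condition characterizing warped products (the ``pencil of planes over $4$-spheres through a common point'' condition above), and then verify, for every other type, that the exceptional fibres genuinely fill a real surface rather than collapsing to a curve or a finite set. I expect the delicate cases to be the singular or degenerate threefolds --- cones and their relatives --- together with the ``near-warped'' configurations above, where accidental coincidences could in principle shrink the exceptional locus. A minor preliminary point is to check that $E$ is honestly real-analytic, so that the dimension count in the first step is legitimate.
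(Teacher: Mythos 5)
Your overall strategy is the same as the paper's (invoke the classification of bidegree-$(1,p)$ threefolds from \cite{BV}, normalize with the conformal group, and identify the surviving case with a warped product), but the proposal leaves the actual core of the proof unproved. The phrases ``I expect it to reduce \dots'' and ``Granting this \dots'' defer exactly the step that constitutes the theorem. In the paper the argument is: (a) a topological step showing that if the exceptional set has real dimension at most one, then $\X$ must meet some fiber in complex dimension two (the square of the hyperplane class is nonzero on $\X$ while $H^4(S^6,\mathbb Z)=0$), and then Lemma~\ref{fibint} to upgrade that component to a $\cp^2$; (b) elimination of cases (ii) and (iii) of Theorem~\ref{main} because a smooth $3$-quadric and the Veronese cone contain no planes; (c) the delicate case (iv): one must show the singular line $\tl$ of the supporting singular quadric lies in the fiber $F_\infty$ (ruling out $\tl$ in another fiber by an isotropy-dimension count, and ruling out a transversal intersection point via Proposition~\ref{4.4} and Remark~\ref{Ful}, since such a point would be a cone vertex and hence a singular, non-transverse point of $\X$); then that for planes $Q_\lambda$ not containing $\tl$ the assignment $\lambda\mapsto F_\lambda$ of exceptional fibers is injective, so unless \emph{every} $Q_\lambda$ contains $\tl$ one gets a real two-dimensional family of exceptional fibers; hence $\X$ is a double cone (Remark~\ref{dcone}), which meets every other fiber in a single point, and finally \cite[Lemma 3.5]{BV} forces $\xi_{13}=\xi_{23}=0$, i.e.\ the warped product form \eqref{WPS6}. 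None of this appears in your proposal; you yourself flag it as the ``main obstacle,'' which is an accurate description of the gap.

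Two of your preliminary assertions are also not justified. The claim that a positive-dimensional component of $\X\cap F$ must be a \emph{linear} $\cp^1$ or $\cp^2$ because ``higher degree is incompatible with $\X\cdot F=1$'' is not a valid inference: the homological intersection number places no such direct constraint on excess-dimensional intersection components, and the paper needs the nontrivial result \cite[Proposition 3.6]{BV} (Lemma~\ref{fibint}) even for the statement that a two-dimensional component is a plane (linearity of one-dimensional components is neither claimed nor needed). Likewise, the dichotomy you propose (``pencil of planes over $4$-spheres with or without a common point'') is not how the classification of Theorem~\ref{main} is organized, and the assertion that in all non-warped configurations the exceptional fibres ``genuinely fill a real surface'' is precisely the hardest verification in the paper's case (iv); asserting it without the injectivity argument for $\lambda\mapsto F_\lambda$ and the analysis of where $\tl$ sits is where the proof is missing rather than merely compressed.
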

\begin{remark}{\em
  The space of exceptional points in $S^6$, call it $E$, has the
  structure of a real algebraic variety, and may have several
  components. Since an algebraic variety has integral dimension, the
  above theorem can be rephrased to say that there are no algebraic
  examples with $E$ of dimension one, and if $E$ has dimension zero,
  it must be a single point, and moreover in this case $\X$
  corresponds to a \wp. It is clear that $\X$ will only define an OCS
  away from the set of exceptional points. This is because $J$ cannot
  be defined continuously in a neighborhood of an exceptional point
  since there are at least two directions which have a different
  limit.}
\end{remark}

\subsection{Explicit examples and the classification.}

We shall prove Theorem~\ref{exct} by applying the main result of the
paper \cite{BV}. In order for our notation to be consistent with that,
we fix attention on the non-degenerate quadric
\begin{equation}
\label{q6}
Q^6=\{[x_1,\ldots,x_8]\in\cp^7:x_1x_8-x_2x_7+ x_3x_6-x_4x_5=0\}.
\end{equation}
In identifying $Q^6$ with the twistor space of $S^6$, we shall
  further suppose that the linear subspace
\begin{align}
\label{P0}
   P_0=\{[x_1,x_2,x_3,x_4,0,0,0,0]\}\cong\cp^3
\end{align}
of $Q^6$ is \emph{vertical} in the sense of Subsection~\ref{linears}.
Only later will we be able to relate the homogeneous coordinates $x^i$
more explicitly to those in the previous section.

A key role will be played by the singular quadric $\Q^4_s$ given by
the intersection of \eqref{q6} and $\cp^5$ defined by $x_7=x_8=0$.
Thus 
\begin{align}
\label{Q4s}
\Q^4_s=\{[x_1,\ldots,x_6,0,0]\in\cp^7:x_3x_6=x_4x_5\}.
\end{align}
is defined by a quadratic form of rank 4. We may regard $Q^4_s$ as the
union of the subspaces
\begin{align}
\label{Plambda}
P_\lambda = \{ [u_0,u_1,au_2,au_3,bu_2,bu_3,0,0]\}\cong\cp^3,
\end{align}
as $\lambda=b/a$ ranges over $\cp^1$. Their common intersection is
the line
\begin{align}
\label{singL}
L=\{[u_0,u_1,0,0,0,0,0,0]\}\cong\cp^1
\end{align}
on which $Q^4_s=\Ann(L)$ is singular. 
Since the family
(\ref{Plambda}) includes the space (\ref{P0}), it follows that every
$P_\lambda$ is a vertical $\cp^3$.

Recalling \eqref{homology}, we also need to mention a special
irreducible threefold of bidegree $(1,3)$, constructed as a cone over
the Veronese embedding of $\cp^2$ in the Grassmannian
$\mathbb{G}\mathrm{r}(2,4)$. The latter is identified with the
\emph{smooth} 4-quadric $\Q^6\cap\{x_1 = x_7 = 0\}$ via the Pl\"ucker
embedding, and the threefold is the image of the weighted projective
space with homogeneous coordinates $u_0,u_1,u_2,u_3$ and weights
$(1,1,1,2)$ under the map given by
\begin{align}
  \label{coneV} 
[u_0,u_1,u_2,u_3]\mapsto[u_3,\,u_0^2,\,u_0 u_1,\,u_0
  u_2,\, u_1^2\!-\!u_0 u_2,\,u_1 u_2,\,u_2^2,\,0].
\end{align}
It features in the main classification result from \cite{BV}:

\begin{theorem}[{\cite[Theorem 2.7]{BV}}]
\label{main}
Every irreducible threefold $\X$ of bidegree $(1,p)$ in $\Q^6$ is
given by one of the following, up to the action of $\mathrm{Aut}
(\Q^6) = \mathrm{PSO}(8,\cc)$:
\begin{enumerate}
\item[(i)] $p=0$ and $\X$ is a
horizontal $\cp^3$,
\item[(ii)] $p=1$ and $\X$ is a smooth quadric
in $\cp^4\subset \cp^7$,
\item[(iii)] $p=3$ and $\X$ is the cone
over the Veronese surface given by {\rm\eqref{coneV}},
\item[(iv)] $p\ge1$ and $\X$ is a Weil divisor in the quadric
  {\rm(\ref{Q4s})}.
\end{enumerate}
\end{theorem}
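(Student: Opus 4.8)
The plan follows the strategy of \cite{BV}: argue by the degree of $\X$ in $\cp^7$ and reduce everything to the classical classification of varieties of minimal degree, together with a case analysis of how each type can lie on the $6$-quadric. By the computation preceding the statement, the class of $\X$ in $H_6(\Q^6,\mathbb Z)$ of \eqref{homology} is $h+p\kern1pt v$, where $h,v$ denote the horizontal and vertical $\cp^3$ classes, so $\deg_{\cp^7}\X=p+1$. Let $\Lambda\cong\cp^k$ be the linear span of $\X$; since $\X$ is nondegenerate and irreducible in $\Lambda$ one has $p+1=\deg\X\ge(k-3)+1=k-2$, hence $k\le p+3$ and $\X\subset\Q^6\cap\Lambda$. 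If $p=0$ then $\deg\X=1$, so $\X$ is a linear $\cp^3$; the maximal linear subspaces of $\Q^6$ form the two families of Subsection~\ref{linears}, and bidegree $(1,0)$ singles out the horizontal one, which is case~(i) (uniqueness up to $\mathrm{PSO}(8,\cc)$ being the transitivity of the automorphism group of a quadric on each family of maximal linear subspaces).

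For $p\ge1$ the argument splits according to whether or not $\X$ lies in some $\mathrm{PSO}(8,\cc)$-translate of the rank-$4$ quadric $Q^4_s$ of \eqref{Q4s}. Suppose it does. As observed after \eqref{Plambda}, $Q^4_s$ is swept out by the pencil of vertical $\cp^3$'s $P_\lambda$; inspecting \eqref{Plambda} shows it is \emph{also} swept out by a second pencil $P'_\mu$ with $P_\lambda\cap P'_\mu\cong\cp^2$, so by the intersection criterion of Subsection~\ref{linears} the $P'_\mu$ are \emph{horizontal}. Thus $[P_\lambda]=v$ and $[P'_\mu]=h$ generate $H_6(\Q^6)$, the restriction $\operatorname{Cl}(Q^4_s)\to H_6(\Q^6)$ is an isomorphism, and an irreducible Weil divisor $D\subset Q^4_s$ has bidegree $(1,p)$ precisely when $[D]=p\kern1pt v+h$ — a horizontal $\cp^3$ if $p=0$, and genuinely case~(iv) exactly when $p\ge1$. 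It remains to show that an $\X$ of bidegree $(1,p)$ not contained in any translate of $Q^4_s$ is one of (i), (ii), (iii); the crux is the \textbf{degree bound}: such an $\X$ has $p\le3$ (equivalently $\deg\X\le4$).

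Granting the bound one finishes by inspection. If $p=1$, $\X=\Q^6\cap\Lambda$ is a quadric threefold in $\Lambda\cong\cp^4$; were it singular, the form of $\Q^6$ would restrict to $\widehat\Lambda\subset\cc^8$ with rank $\le4$ and a translate of $Q^4_s$ would then contain $\X$, against hypothesis, so $\X$ is a \emph{smooth} quadric; Witt's extension theorem makes $\mathrm{SO}(8,\cc)$ transitive on the nondegenerate $5$-dimensional $\widehat\Lambda$, giving uniqueness, and comparison with the degree-$2$ closure of the warped product of Remark~\ref{4dt} fixes the bidegree at $(1,1)$ — case~(ii). If $p=2$, then $\deg\X=3$ forces $\X$ to span a $\cp^5$ (it cannot span $\cp^4$, since a degree-$3$ threefold is not a component of the quadric $\Q^6\cap\cp^4$, and $k\le5$), so $\X$ is a divisor in the fourfold $\Q^6\cap\cp^5$; that fourfold has rank $4,5$ or $6$, and in the rank-$5$ and rank-$6$ cases the divisor class group is $\mathbb Z$ with only even-degree members, while the rank-$4$ case is $Q^4_s$ — so no new case arises for $p=2$. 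If $p=3$, then $\deg\X=4$ and $\X$ has minimal degree in its span $\cp^k$ with $k\le6$; the del Pezzo--Bertini list of minimal-degree varieties leaves, besides rational normal threefold scrolls, only the cone \eqref{coneV} over the Veronese surface $v_2(\cp^2)\subset\cp^5$; a scroll is ruled by a pencil of planes $\Pi_t\cong\cp^2$, each meeting a generic twistor fiber in at most one point by Lemma~\ref{cp2}, and $[\X]\cdot[F]=1$ then forces an incidence pattern among the $\Pi_t$ that cannot occur outside a translate of $Q^4_s$ — leaving the Veronese cone, normalized by $\mathrm{PSO}(8,\cc)$ via the rigidity of the Veronese embedding and of its vertex, which is case~(iii).

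The main obstacle is the degree bound, and with it the exclusion of the ``extra'' scrolls of degree $\ge3$ and of all threefolds of bidegree $(1,p)$ with $p\ge4$. I expect this to rest on a uniform statement about linear subspaces: a positive-dimensional family of planes (or lines) in $\Q^6$ whose generic member meets the generic twistor fiber in a single point must be, up to $\mathrm{PSO}(8,\cc)$, one of the rulings $P_\lambda,P'_\mu$ of a translate of $Q^4_s$ (whose own ruling has exactly this property) or one of the small configurations occurring in (i)--(iii); since a threefold of degree $\ge5$ not lying in such a $Q^4_s$ is either a scroll or a variety of next-to-minimal degree, and in either case is swept by linear spaces of the forbidden type, it cannot have bidegree $(1,p)$. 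A secondary, more bookkeeping task is to verify in each case that $\mathrm{PSO}(8,\cc)$ acts transitively on the admissible configurations: Witt's theorem for (i) and (ii), Veronese rigidity for (iii), and a stratification of $\operatorname{Cl}(Q^4_s)$ together with the choice of the singular line $L$ for (iv).
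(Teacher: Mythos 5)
This statement is not proved in the paper at all: it is quoted verbatim from \cite[Theorem 2.7]{BV}, so there is no internal argument to compare against, and what you have written is an attempt to reprove that external result. Your preliminary reductions are fine: $\deg_{\cp^7}\X=p+1$ via \eqref{homology}, the $p=0$ case, the observation that the second ruling of \eqref{Q4s} is horizontal so that $\mathrm{Cl}(\Q^4_s)\cong\mathbb{Z}^2$ maps isomorphically onto $H_6(\Q^6,\mathbb{Z})$, and the resulting description of case (iv).

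The genuine gap is the step you yourself flag: showing that an irreducible $\X$ of bidegree $(1,p)$ \emph{not} contained in any $\mathrm{PSO}(8,\cc)$-translate of $\Q^4_s$ must satisfy $p\le 3$ and, in fact, be one of (i)--(iii). That exclusion is the entire content of the theorem, and it is only ``expected'' in your write-up, resting on an unproved ``uniform statement about linear subspaces.'' Worse, the heuristic offered for it is false: the del Pezzo--Bertini classifications constrain only varieties whose degree equals $\mathrm{codim}+1$ or $\mathrm{codim}+2$ \emph{in their linear span}, whereas a $(1,p)$ threefold with $p$ large has degree $p+1$ far exceeding its codimension (at most $4$) in its span in $\cp^7$; such a threefold need be neither a scroll nor of next-to-minimal degree, and need not be swept out by linear subspaces at all, so the proposed reduction to families of planes meeting the generic twistor fiber once never gets started. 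Indeed the case (iv) examples show that $(1,p)$ threefolds of arbitrarily large degree do exist, so no argument based purely on degree versus span can work: one must use the order-one condition geometrically, which is what \cite{BV} does and what is missing here. There are smaller unverified steps of the same kind: in the $p=1$ case the claim that a singular quadric $\Q^6\cap\cp^4$ lies in a translate of $\Q^4_s$ is asserted rather than proved; in the $p=3$ case the possibility that a degree-$4$ threefold spans only a $\cp^5$ is not treated, and the exclusion of the rational normal scrolls is a hand-wave about an ``incidence pattern that cannot occur.'' Until the exclusion of $p\ge4$ (and of the scrolls) outside $\Q^4_s$ is actually established, the proposal is an outline of the easy half of the classification, not a proof.
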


\begin{remark}{\em
Different notions of divisor crop up in the study of the singular
spaces that play an essential role in our twistor theory. A smooth
example of (iv) arises as follows. The zero set in $Q^4_s$ of the
irreducible polynomial $x_1x_6-x_2x_4$ is a union $P_0\cup D$ where
$P_0$ is defined (as in (\ref{Plambda})) by $x_3=x_4=0$ and
\begin{align*}
D=\{[au_1,bu_1,au_2,au_3,bu_2,bu_3,0,0]\}\cong\cp^1\times\cp^2,
\end{align*}
is the image of the Segre embedding with $[a,b]\in\cp^1$ and
$[u_1,u_2,u_3]\in\cp^3$. In fact, every Weil divisor of degree $(1,p)$
on $\Q^4_s$ can be written as the divisor of a polynomial
$f(x_1,\ldots, x_6)$ of degree $p$ minus $(p-1)P_0$ \cite{BV}. This
example is generalized by the next proposition.}
\end{remark}

In \cite[Section 4]{BV}, it is shown that in case (iv), we have that
\begin{align}
\label{qp}
Q_\lambda = \overline{\X\cap(P_\lambda\setminus L)}
\end{align}
is a $\cp^2$ contained in $\X$. We also need to know

\begin{proposition}[{\cite[Proposition 4.4]{BV}}]
\label{4.4} 
If $\X$ is in case (iv) of Theorem \ref{main}, then $\X$ contains $L$
and is equal to the union of the $Q_\lambda$ over all $\lambda\in
\cp^1$. This is almost a disjoint union, in the sense that the only
intersections occur at points of $L$.  Furthermore, $\X$ is smooth
away from $L$.
\end{proposition}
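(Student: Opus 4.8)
The plan is to exploit the explicit presentation of a bidegree-$(1,p)$ Weil divisor on $\Q^4_s$ as $\operatorname{div}(f)-(p-1)P_0$ in order to control the trace of $\X$ on each plane $P_\lambda$, and then to reassemble $\X$ from these traces. First I would record the ambient picture: $\Q^4_s=\bigcup_{\lambda\in\cp^1}P_\lambda$ with each $P_\lambda\cong\cp^3$, with $P_\lambda\cap P_\mu=L$ for $\lambda\ne\mu$, and with $L$ the singular locus of $\Q^4_s$; the rule assigning to $x$ the unique $\lambda$ with $x\in P_\lambda$ (explicitly $x\mapsto[x_3:x_5]$, equivalently $[x_4:x_6]$) is a morphism $\rho\colon\Q^4_s\setminus L\to\cp^1$ with fibres $P_\lambda\setminus L$. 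By the remark following Theorem~\ref{main}, since $\X$ has bidegree $(1,p)$ we may write $\X+(p-1)P_0=\operatorname{div}(f)$ on $\Q^4_s$ with $f$ homogeneous of degree $p$ in $x_1,\dots,x_6$; in particular $\{f=0\}\cap\Q^4_s=\X\cup P_0$ as sets, and since $L\subset P_0$ this already shows that $f$ vanishes identically on $L$.

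The heart of the argument is to restrict $f$ to $P_\lambda$ for $\lambda\ne0$. Because $\X$ is irreducible of bidegree $(1,p)$ it is not the vertical $\cp^3$ $P_\lambda$, so $\X\cap P_\lambda$ has dimension $\le2$, and its two-dimensional part is exactly the plane $Q_\lambda=\overline{\X\cap(P_\lambda\setminus L)}$ of \cite[Section 4]{BV}: any surface component of $\X\cap P_\lambda$ meets $P_\lambda\setminus L$ and hence lies in the irreducible surface $Q_\lambda$. On the other hand $f|_{P_\lambda}$ is a nonzero form of degree $p$ on $P_\lambda\cong\cp^3$ whose zero locus is $(\X\cap P_\lambda)\cup(P_0\cap P_\lambda)=(\X\cap P_\lambda)\cup L$; being the support of an effective divisor on a smooth threefold, this zero locus is pure of codimension one, so it must coincide with $Q_\lambda$, and $\operatorname{div}(f|_{P_\lambda})=p\,Q_\lambda$. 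Since $f$ vanishes on $L$ and $L\subset P_\lambda$, it follows that $L\subset Q_\lambda$, and then $\X\cap P_\lambda=Q_\lambda$. Finally, the set of $\lambda$ with $L\subset Q_\lambda$ is closed in $\cp^1$ and contains the dense set $\cp^1\setminus\{0\}$, hence equals $\cp^1$; combining this with $\X\setminus L=\bigsqcup_\lambda(Q_\lambda\setminus L)$ and $L\subset Q_0$ yields $\X\cap P_\lambda=Q_\lambda$ for every $\lambda$.

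The three conclusions now follow. Since $L\subset Q_0\subset\X$ we get $L\subset\X$; since $\X=\bigcup_\lambda(\X\cap P_\lambda)=\bigcup_\lambda Q_\lambda$ we get the stated decomposition; and for $\lambda\ne\mu$ we have $Q_\lambda\cap Q_\mu\subset P_\lambda\cap P_\mu=L$, so the union is disjoint away from $L$. For smoothness away from $L$ I would pass to the incidence variety $\tilde\X=\overline{\{(\rho(x),x):x\in\X\setminus L\}}\subset\cp^1\times\Q^4_s$: it is irreducible, flat and surjective over $\cp^1$ with fibre $Q_\lambda\cong\cp^2$ over $\lambda$, hence is a $\cp^2$-bundle over $\cp^1$ and in particular smooth, while the projection $\tilde\X\to\X$ restricts to an isomorphism over $\X\setminus L$ with inverse $x\mapsto(\rho(x),x)$. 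Therefore $\X$ is smooth off $L$.

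I expect the middle step to be the main obstacle. A priori each $Q_\lambda$ need only meet $L$ in a single point, and one must rule out the scenario in which all the $Q_\lambda$ pass through one fixed point of $L$ rather than containing $L$; it is precisely the presentation $\X=\operatorname{div}(f)-(p-1)P_0$, used together with the irreducibility of $\X$ and the input from \cite{BV} that each $Q_\lambda$ is a genuine $\cp^2$, that excludes this. The remaining points --- extending from generic $\lambda$ to $\lambda=0$, and checking that $\tilde\X$ is a projective bundle over $\cp^1$ --- are routine once one knows that $\lambda\mapsto Q_\lambda$ is an algebraic family.
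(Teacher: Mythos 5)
First, a point of comparison: the paper does not prove this statement at all --- it is imported verbatim as \cite[Proposition 4.4]{BV} --- so there is no internal proof to measure your argument against; it has to be judged on its own and against the paper's surrounding examples. Judged that way, your central step fails, and it fails by proving too much. Your restriction-to-$P_\lambda$ argument concludes that $L\subset Q_\lambda$ for every $\lambda$, i.e.\ that every threefold in case (iv) is a double cone in the sense of Remark~\ref{dcone}; but the paper presents the double cone as a \emph{special} sub-case, and its own smooth example $D\cong\cp^1\times\cp^2$ of bidegree $(1,2)$ (the remark after Theorem~\ref{main}) is a counterexample: there $Q_\lambda$ is the Segre image of $\{\lambda\}\times\cp^2$ and meets $L$ in the single point $[a,b,0,\ldots,0]$, $\lambda=b/a$, which moves with $\lambda$, so no $Q_\lambda$ contains $L$, and moreover $\X\cap P_\lambda=Q_\lambda\cup L\neq Q_\lambda$, contradicting your intermediate claim as well.

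The error traces to the divisor presentation. A bidegree check shows $\mathrm{div}(f)=\X+(p-1)P_0$ with $P_0$ \emph{vertical} is impossible for $p\ge2$: $\mathrm{div}(f)$ has class $p\,(\hbox{horizontal}+\hbox{vertical})$, i.e.\ bidegree $(p,p)$, while $\X+(p-1)P_0$ has bidegree $(1,2p-1)$. The plane subtracted must lie in the \emph{opposite} ruling $\{x_4=\mu x_3,\ x_6=\mu x_5\}$ of \eqref{Q4s} (indeed for $D$ one has $f=x_1x_6-x_2x_4$ and the zero set of $f$ on $\Q^4_s$ is $D\cup\{x_4=x_6=0\}$; the paper's remark is loosely worded on this point and may have misled you). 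A plane $R$ of the opposite ruling meets each $P_\lambda$ in a $\cp^2$, not in $L$, so $\{f|_{P_\lambda}=0\}$ generically has two plane components, $Q_\lambda$ and $R\cap P_\lambda$; the vanishing of $f$ along $L$ only places $L$ inside $R\cap P_\lambda$ and says nothing about $Q_\lambda$. For $p=1$ the presentation gives $\mathrm{div}(f)=\X$ and nothing forces $f$ to vanish on $L$ at all; in fact the irreducible bidegree-$(1,1)$ divisor $\{x_1=0\}\cap\Q^4_s$ (a cone over a smooth quadric surface with vertex a single point of $L$) does not contain $L$, so the ``contains $L$'' assertion cannot be reached by any argument of your type without an extra hypothesis such as $p\ge2$ (presumably present in \cite{BV}). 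What survives of your proposal are the easy parts: $\X\setminus L=\bigsqcup_\lambda(Q_\lambda\setminus L)$ follows from \eqref{qp} and the fibration $\rho$ alone, and the incidence-variety sketch for smoothness off $L$ (fibers are flat limits of planes, hence planes, so $\tilde\X\to\cp^1$ is a $\cp^2$-bundle) is sound in outline; but the genuinely substantive claims --- that $\X\supset L$ and the control of how the $Q_\lambda$ and their closure meet $L$ --- are exactly what your middle step was meant to supply and does not.
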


\begin{remark}
\label{dcone}
{\em A special sub-case of case (iv) is when every $Q_{\lambda}$ 
contains $L$. In this case, $\X$ will be a {\em{double cone}} over 
a $(1,p)$ curve $C$ in a smooth $2$-quadric. That is, 
$X$ consists of all lines through points on $C$ 
and points of $L$. This case will play a crucial role in the 
proof of our main theorem. }
\end{remark}

We emphasize that the classification in Theorem \ref{main} is modulo
the action of the automorphism group $\mathrm{PSO}(8,\cc)$ of
$Q^6$. To prove our main theorem, we need to take into account special
properties of the twistor fibration and the conformal group studied in
Section~\ref{twistorprojection}. Another imported result gives a
strong restriction on the intersection of $\X$ with a twistor
fiber:
\begin{lemma}[{\cite[Proposition 3.6]{BV}}] 
\label{fibint}
If the intersection of $\X$ with a vertical $\cp^3$ has a component of
complex dimension $2$ or more, than this component is unique and is a $\cp^2$.
\end{lemma}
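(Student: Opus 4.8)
The plan is to exploit the bidegree directly: since $\X$ has bidegree $(1,p)$, the intersection numbers recalled in Section~\ref{linears} give $\X\cdot F=1$ for \emph{every} vertical $\cp^3$ $F$ (this is the computation in Remark~\ref{Ful}), and a dimension count on the incidence variety $\{(x,F):F\text{ a vertical }\cp^3,\ x\in\X\cap F\}$ — which fibres over $\X$ in the $3$-dimensional families of vertical $\cp^3$-s through a point, hence has dimension $3+3=6=\dim\{\text{vertical }\cp^3\text{-s}\}$ — shows the generic vertical $\cp^3$ meets $\X$ properly, so in a single reduced point. Let $W$ be a component of $\X\cap P$ with $\dim W\ge2$. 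First, $\dim W=3$ is impossible: then $W=P$ and (as $\X$ is irreducible of dimension $3$) $W=\X$, forcing $\X$ to be a vertical $\cp^3$, of bidegree $(0,1)\ne(1,p)$. So $\dim W=2$, and $W$ is an irreducible hypersurface in $P\cong\cp^3$, say $W=V(g)$ with $g$ irreducible of degree $d\ge1$; the goal is to prove $d=1$ (so $W$ is a $\cp^2$) and that $W$ is the only component of $\X\cap P$ of dimension $\ge2$.

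Since the intersection of $\X$ with $P$ itself is improper, I would instead intersect $\X$ with vertical $\cp^3$-s that share a line with $P$. Let $\mathcal N$ be the variety of vertical $\cp^3$-s $F$ with $\dim(F\cap P)\ge1$; as $F$ and $P$ lie in the same family, $F\cap P$ is then a linear space containing a line $L\subset P$. By the criterion of Section~\ref{linears}, the condition $\dim(F\cap P)\ge1$ is cut out by the single equation $\langle\phi_P,\phi_F\rangle=0$ on the parameter $6$-quadric, so $\mathcal N$ has dimension $5$; concretely, the members of $\mathcal N$ through a fixed line $L\subset P$ form the $\cp^1$ of spaces $P_\lambda$ appearing in the discussion of $\Q^4_s$, and $F\mapsto F\cap P$ maps $\mathcal N$ dominantly onto the $4$-dimensional variety of lines in $P$. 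Moreover, through the generic point of $\Q^6$ the members of $\mathcal N$ form a $2$-dimensional subfamily of the $3$-dimensional family of all vertical $\cp^3$-s through that point. Hence the incidence variety $\{(x,F):F\in\mathcal N,\ x\in\X\cap F\}$ projects to the $3$-fold $\X$ with generic fibre of dimension $2$, so has dimension at most $5=\dim\mathcal N$, and the generic $F\in\mathcal N$ meets $\X$ in dimension $0$, hence in exactly one reduced point.

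Now fix such a generic $F\in\mathcal N$ and set $L=F\cap P$, a generic line of $P$. Then $L\not\subset W$, and since $g$ is irreducible, $L$ meets $W=V(g)$ transversally in $d$ distinct points. These points lie in $W\subset\X$ and in $L\subset F$, hence in $\X\cap F$; as $\X\cap F$ is a single point, $d=1$ and $W$ is a hyperplane of $P\cong\cp^3$, that is, a $\cp^2$. If $\X\cap P$ had a second component $W'$ of dimension $2$, the same generic line $L$ would meet $W'$ in at least one further point of $\X\cap F$, giving $\#(\X\cap F)\ge2$, a contradiction; so $W$ is the unique component of $\X\cap P$ of dimension $\ge2$.

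The delicate step is the dimension count of the second paragraph — namely that $\mathcal N$ is not contained in the locus of vertical $\cp^3$-s whose intersection with $\X$ is positive-dimensional, equivalently that $\X$ carries no $1$-parameter family of vertical $\cp^3$-s through lines of $P$. This is verified by comparing $\dim\mathcal N=5$ with the dimension of the incidence variety above, the required estimates for the families of linear subspaces of $\Q^6$ (dimensions of the spaces of vertical $\cp^3$-s through a point, and through a line, and the codimension of ``meeting $P$ in dimension $\ge1$'') being exactly those furnished by the orthogonal-Grassmannian picture of Section~\ref{linears} together with the structure of $\Q^4_s$. It is precisely here that verticality of $P$ is essential: for a horizontal $\cp^3$ one has $\X\cdot F=p$ rather than $1$, and the same reasoning would only bound $\deg W\le p$.
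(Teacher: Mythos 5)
Your argument is essentially correct, but note that the paper does not prove Lemma~\ref{fibint} at all: it is imported verbatim from \cite{BV} (Proposition 3.6 there), so your proposal is a self-contained substitute rather than a reconstruction of anything in this text. What you use is exactly the material recalled in Section~\ref{linears} — the bidegree gives $\X\cdot F=1$ for \emph{every} vertical $\cp^3$ (the multiplicity argument of Remark~\ref{Ful} then applies verbatim whenever the intersection is finite), two vertical $\cp^3$-s are disjoint, meet in a $\cp^1$, or coincide, and the locus $\mathcal N=\{F:\langle\phi_P,\phi_F\rangle=0\}$ of verticals meeting $P$ is a $5$-dimensional hyperplane section of the parameter quadric — together with standard isotropic-Grassmannian dimension counts (verticals through a point form a $3$-fold, those through a point and meeting $P$ a $2$-fold, those through a line a pencil). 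These counts are all correct, and the endgame (a generic line $L\subset P$ lies in a good $F$ with $\X\cap F$ a single reduced point, so $\deg W=1$ and a second $2$-dimensional component is impossible) does prove the lemma. What this buys is independence from \cite{BV} for this particular step; what it costs is that the genericity bookkeeping, which you flag but do not carry out, really is where the content sits.

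Three small points should be nailed down. First, you need $\X$ irreducible (or at least containing no vertical $\cp^3$) to exclude a $3$-dimensional component: as stated, a reducible threefold of bidegree $(1,p)$ containing a vertical $\cp^3$ would violate the conclusion, so the lemma tacitly carries the irreducibility hypothesis under which it is applied in Section~\ref{finalsec}. Second, ``generic $F\in\mathcal N$ meets $\X$ in one point'' does not immediately give ``generic line $L\subset P$ admits such an $F$ with $F\cap P=L$'': you need that the bad locus in $\mathcal N$ (where $\dim(\X\cap F)\ge1$), which your incidence bound forces to have dimension at most $4$, cannot contain the entire pencil over a $4$-dimensional family of lines — one more sentence, but it should be said, since the fibers of your incidence variety jump from dimension $2$ over $\X\setminus P$ to dimension $3$ over $\X\cap P$. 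Third, in the uniqueness step the generic $L$ must also avoid $W\cap W'$ so that the points $L\cap W$ and $L\cap W'$ are genuinely distinct; this is again a generic condition on $L$, but as written ``at least one further point'' assumes it silently. With these additions the proof is complete.
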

\noindent
With these tools, we can now prove our main result.
\begin{proof}[Proof (of Theorems~\ref{exct} and \ref{t2})]
  Suppose that we have $\X$ such that $\X\to S^6$ has at most
  a real dimension one set of exceptional fibers. Then $\X$ must
  intersect at least one of the twistor fibers in a set of complex
  dimension at least two, else $\X$ and $S^6$ are isomorphic in real
  codimension $2$. But the square of the hyperplane class is non-zero
  on $\X$, and $H^4(S^6,\mathbb{Z})=0$, so this is not possible.

We identify $\rr^6$ with $S^6 \setminus \{\infty\}$. By performing 
a conformal transformation, we may assume without loss of generality 
that $\X$ intersects the fiber $F_{\infty} \cong\cp^3$ over infinity 
in a set of complex dimension $2$. By Lemma \ref{fibint}, this 
intersection consists of a $\cp^2$
and (perhaps) some lower dimensional components. We will now invoke
Theorem~\ref{main}.\smallbreak

If $p=0$ and $\X$ is a horizontal $\cp^3$, then this corresponds to a
constant OCS. Indeed, $\X$ hits $F_{\infty}$ in a $\cp^2$, and hits every
other fiber in a single point. Using $\SU(4)$, we can 
arrange so that this $\cp^2$ is the same as that corresponding 
to $\bJ$ in \eqref{1cp}. Since there is a unique horizontal 
$\cp^3 \subset Q^6$ containing this $\cp^2$, this proves 
that $J$ is conformally equivalent to $\bJ$. \smallbreak

If $p=1$ and we have a smooth quadric $\Q^3$, then $\X\cap F_{\infty}$ 
cannot be a $\cp^2$ because the maximal linear subspace contained in $\Q^3$
is a $\cp^1$.\smallbreak

If $p=3$ and we have a cone over the Veronese surface, then this $\X$ does not
contain any $\cp^2$-s either. To see this, the image of a $\cp^2$
in the Veronese surface would be a linear subspace of complex dimension one or two.
Of course, it is not a $\cp^2$, since the Veronese image is not linear. It is
not a $\cp^1$ either, because all curves in the Veronese surface have even
degree. Indeed, there are no lines inside the Veronese surface in
$\cp^5$.\smallbreak

It remains to consider the case (iv), in which $\X$ lies in the image
$\tilde{Q}^4_s$ of the quadric \eqref{Q4s} under a projective
transformation of $Q^6$. 
We let $\tl$ denote the singular line of  $\tilde Q^4_s$, 
and we continue to denote the fiber $\cp^3$ at infinity by $F_\infty$.

\begin{claim} The line $\tl$ must be contained in $F_{\infty}$.
\end{claim}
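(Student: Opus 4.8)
The plan is to argue by contradiction, assuming $\tl\not\subset F_\infty$, and to exploit the fact (from Proposition~\ref{4.4} and Remark~\ref{dcone}) that $\X$ is the union of the $\cp^2$-s $Q_\lambda$, with $\tl$ lying in every $Q_\lambda$, meeting each other only along $\tl$, while $\X$ is smooth away from $\tl$. The key point is that $\X\cap F_\infty$ contains a $\cp^2$, call it $P$, and we must see how $P$ sits relative to the fibration structure $\X=\bigcup_\lambda Q_\lambda$.

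First I would observe that, by Lemma~\ref{cp2}, the plane $P$ either lies entirely in the twistor fiber $F_\infty$ (which it does, by construction) — so the relevant statement of Lemma~\ref{cp2} to apply is rather to the \emph{other} planes $Q_\lambda$: a generic $Q_\lambda$ meets each twistor fiber in at most a point, since $\X$ arises as the closure of a graph and hence meets the generic fiber in one point. So for generic $\lambda$, $Q_\lambda\cap F_\infty$ is a single point, which must lie on $P$ (as $\X\cap F_\infty\supseteq P$ and the $Q_\lambda$ cover $\X$). Thus $P$ is swept out by these points $Q_\lambda\cap F_\infty$ as $\lambda$ varies, together with the special fibers. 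Since $\tl\subset Q_\lambda$ for every $\lambda$, if $\tl\not\subset F_\infty$ then $\tl\cap F_\infty$ is at most one point $p_0$; but then $Q_\lambda\cap F_\infty$ contains the point $Q_\lambda\cap F_\infty$ which varies with $\lambda$, and these are all distinct points of $P$ — yet each $Q_\lambda$ passes through the \emph{fixed} point $p_0\in\tl$ (if $p_0$ exists) or through no common point. The contradiction I am aiming for is dimensional: the points $Q_\lambda\cap F_\infty$ trace out (at most) a curve in $P$ as $\lambda\in\cp^1$, not a whole $\cp^2$, so they cannot account for a $2$-dimensional $P\subseteq \X\cap F_\infty$ unless some $Q_{\lambda_0}$ itself lies in $F_\infty$.

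So the real content is: some $Q_{\lambda_0}\subset F_\infty$, i.e.\ $Q_{\lambda_0}=P$. Now $\tl\subset Q_{\lambda_0}=P\subset F_\infty$, giving the claim. To make this rigorous I would: (1) note $\X\cap F_\infty$ has a unique $2$-dimensional component $P=\cp^2$ by Lemma~\ref{fibint}; (2) use that $\X=\bigcup_{\lambda}Q_\lambda$ with the $Q_\lambda$ covering $\X$, so $P\subseteq\bigcup_\lambda(Q_\lambda\cap F_\infty)$; (3) since $P$ is irreducible of dimension $2$ and each $Q_\lambda\cap F_\infty$ is either all of $Q_\lambda$ (if $Q_\lambda\subset F_\infty$) or a subvariety of $Q_\lambda$ of dimension $\le 1$ — the latter because $Q_\lambda=\cp^2$ and $F_\infty=\cp^3$ are both linear, so their intersection, if proper, is a $\cp^1$ or a point or empty — the union over $\lambda\in\cp^1$ of the ``small'' pieces has dimension at most $2$ but is a countable/irreducible-family union of curves, and cannot contain the irreducible surface $P$ unless one of the large pieces $Q_{\lambda_0}=P$ does. (A clean way: the incidence set $\{(\lambda,x):x\in Q_\lambda\cap F_\infty\}$ maps onto $\bigcup(Q_\lambda\cap F_\infty)\supseteq P$; if no $Q_\lambda\subset F_\infty$ the fibers of the first projection have dimension $\le 1$ so the image has dimension $\le 2$ with equality only if generic fibers are curves — but generic $Q_\lambda$ meets the generic twistor fiber in a point and $F_\infty$ is just one fiber, so generically the fiber is a point, forcing image dimension $\le 1<2$, contradiction.) Hence some $Q_{\lambda_0}=P\subset F_\infty$, and since $\tl\subset Q_{\lambda_0}$, we conclude $\tl\subset F_\infty$.

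The main obstacle I anticipate is handling the ``generic $Q_\lambda$ meets the generic twistor fiber in one point'' input carefully: $F_\infty$ is a \emph{single} fiber, not a generic one, so I cannot directly say $Q_\lambda\cap F_\infty$ is a point for generic $\lambda$. What is true is that $\X$ meets the generic twistor fiber in one point; I would need to combine this with the covering $\X=\bigcup Q_\lambda$ and a dimension count on the incidence variety $I=\{(\lambda,q):q\in Q_\lambda,\ q\in F_\infty\}$. If $\tl\not\subset F_\infty$, then for each $\lambda$ the intersection $Q_\lambda\cap F_\infty$ is a proper linear subspace of $Q_\lambda=\cp^2$ (since $\tl\subset Q_\lambda$ but $\tl\not\subset F_\infty$ means $Q_\lambda\not\subset F_\infty$), hence has dimension $\le1$; so $\dim I\le 2$, and the image $\bigcup_\lambda(Q_\lambda\cap F_\infty)$ in $F_\infty$ has dimension $\le 2$. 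This alone does not contradict $P\subseteq$ image. The finer point is that $P$ is irreducible, the family $\{Q_\lambda\cap F_\infty\}_{\lambda\in\cp^1}$ is a $1$-parameter family of varieties of dimension $\le1$, and such a family cannot have as its union an \emph{irreducible} surface $P$ unless one member already equals $P$ — this requires the members to be $\le1$-dimensional, which fails for the putative $Q_{\lambda_0}=P$, so the only escape is precisely $Q_{\lambda_0}=P\subset F_\infty$. Making this last covering argument airtight (rather than hand-wavy) is the crux; the cleanest route may be to note that the generic fiber of $I\to P$ over a point of $P\setminus\tl$ is finite (a point lies on finitely many $Q_\lambda$ — in fact on exactly one, by the ``almost disjoint'' statement of Proposition~\ref{4.4}), so $\dim I\ge\dim P=2$; combined with $\dim I\le2$ coming from $\dim(Q_\lambda\cap F_\infty)\le1$ we get equality, and then the generic fiber of $I\to\cp^1$ has dimension $2-1=1$, forcing $Q_\lambda\cap F_\infty$ to be a curve for generic $\lambda$ — but this curve, a $\cp^1$ in the linear space $Q_\lambda=\cp^2$, together with $\tl\subset Q_\lambda$ would have to coincide or meet, and in any case produces infinitely many curves in $P$ each meeting $\tl$; one then derives that $\X$ meets a \emph{generic} twistor fiber (not just $F_\infty$) in more than a point, contradicting the graph hypothesis. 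I would present the incidence-variety version as the clean proof.
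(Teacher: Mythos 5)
Your argument rests on the premise that $\tl\subset Q_\lambda$ for \emph{every} $\lambda$, which you attribute to Proposition~\ref{4.4}. That proposition only says that $\X$ contains the line, that $\X=\bigcup_\lambda Q_\lambda$, and that distinct $Q_\lambda$ meet only at points of $\tl$; the situation in which every $Q_\lambda$ contains $\tl$ is exactly the special double-cone sub-case singled out in Remark~\ref{dcone}, and the paper must later \emph{prove} (using the hypothesis on the exceptional set) that one is in that sub-case. Consequently your final step --- ``some $Q_{\lambda_0}=P\subset F_\infty$, and since $\tl\subset Q_{\lambda_0}$ we are done'' --- is unjustified: a plane $Q_{\lambda_0}$ inside $P_{\lambda_0}\cong\cp^3$ is only guaranteed to meet the line $\tl$ in a point, which would give $\tl\cap F_\infty\ne\emptyset$, not $\tl\subset F_\infty$.

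The other branch of your incidence-variety argument is also not closed. If no $Q_\lambda$ lies in $F_\infty$, your dimension count shows that $Q_\lambda\cap F_\infty$ is a line for infinitely many $\lambda$; but these lines all lie in the \emph{single} fiber $F_\infty$ (by Lemma~\ref{cp2} each such $Q_\lambda$ meets exactly one fiber in a $\cp^1$, and nothing prevents that fiber from being $F_\infty$ for all of them --- they can simply form a pencil through the point $\tl\cap F_\infty$ sweeping out $P$), so no conflict with the bidegree/one-point-on-the-generic-fiber condition arises, and your asserted conclusion that ``$\X$ meets a generic twistor fiber in more than a point'' is never derived. More fundamentally, your proof never invokes the standing hypothesis that the exceptional set has real dimension at most one, whereas the paper's proof uses it essentially: it first excludes $\tl$ lying in a fiber different from $F_\infty$ by an isotropy argument (the span of $\tl$ and the $\cp^2$ would be an isotropic $\cp^4$), and then shows that any fiber met transversally by $\tl$ must be exceptional (via the cone-with-vertex argument and Remark~\ref{Ful}), so that $\tl\not\subset F_\infty$ would force a real two-dimensional family of exceptional fibers. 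A purely internal analysis of $\X\cap F_\infty$, as in your proposal, does not rule out the configuration in which $\X$ is a cone with vertex at the single point $\tl\cap F_\infty$; excluding that requires looking at the other fibers through points of $\tl$, i.e.\ precisely the exceptional-set hypothesis you never used.
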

\begin{proof}
First, we rule out that $\tl$ sits completely in some fiber $F$ disjoint
from $F_{\infty}$. To see this, if it sits completely in some other
fiber $F$ then the span of $\tl$ and the $\PP^2 \subset F_{\infty} \cap
X$ would be an isotropic $\PP^4$.  Indeed, $\tl$ and $\PP^2$ are both
isotropic and orthogonal to each other, because $\tilde{Q}^4_s
=\Ann(\tl) \cap Q^6 \subset\Ann(\tl)$.

Second, if $\tl$ has a transversal intersection point with a 
fiber $F\cong\cp^3$ different from $F_{\infty}$ this forces $F$ to be an 
exceptional fiber. 
Suppose, on the contrary, that $F$ is not exceptional, and 
that $z\in F\cap \tl$ is a transversal 
intersection point and that $z$ is the unique point of $F \cap X$.  
Now fix
$\lambda\in\cp^1$. 
Since both $F$ and $P_\lambda$ are vertical, the
intersection $F\cap P_\lambda$ must be a $\cp^1$. Indeed, two 
vertical $\cp^3$-s are either disjoint, intersect in a $\cp^1$, 
or are the same. 
Of course, $F$ intersects each $P_\lambda$ in the
point $z$ (since each $P_{\lambda}$ contains $\tl$, and $z \in \tl$),
so the first possibility does not occur. 
If $F = P_{\lambda}$, then $F$ would contain
$Q_{\lambda}$, and would thus be exceptional,
contrary to assumption. 
Therefore $F\cap P_\lambda = \cp^1$. Also, $X \cap P_{\lambda}$ 
contains $Q_{\lambda} = \cp^2$. Since any $\cp^1 \subset P_{\lambda}$
and any $\cp^2 \subset P_{\lambda} = \cp^3$ must intersect, 
$F \cap X$ must therefore contain a point of $Q_{\lambda}$. 
By assumption, this point must be $z$. This shows that $z$ is 
contained in {\em{every}} $Q_{\lambda}$.
Proposition~\ref{4.4} now implies that $\X$ is a
cone with vertex $z$, so it is singular at $z$, since $\deg(\X)>1$.
Since it is singular at $z$, the intersection of $\X$ with $F$ at $z$
cannot be transversal (see Remark \ref{Ful}), which is a contradiction.

The previous paragraph implies that $\tl$ cannot have a transversal 
intersection point with {\em{any}} fiber. Indeed, since different points 
of $\tl$ belong to different $F$-s, we would have a real dimension $2$ set 
of exceptional fibers, which contradicts the assumption on the dimension of 
the exceptional set. Together with the first paragraph, this proves 
the claim. 
\end{proof}

We may now assume that the fiber $F_{\infty}$ contains $\tl$. Under
this assumption we have the following. 
\begin{claim}
\label{cl2}
For any twistor fiber $F$ other than $F_{\infty}$, $\tilde{Q}^4_s \cap F$ is a
$\cp^1$.
\end{claim}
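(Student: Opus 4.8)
The plan is to reduce this to a dimension count for a nondegenerate bilinear form. Write $U\subset\cc^8$ for the $4$-dimensional isotropic subspace with $\cp(U)=F$, and $\ell\subset\cc^8$ for the $2$-dimensional isotropic subspace with $\cp(\ell)=\tl$; both are isotropic because $F$ is a linear $\cp^3$ in $Q^6$ (so that $U$ is in fact \emph{maximal} isotropic, whence $U^\perp=U$), and $\tl$ is a line lying on $Q^6$. The first point to establish is that $U\cap\ell=0$. This follows from the preceding claim, which places $\tl$ inside $F_\infty$: since $F$ and $F_\infty$ are twistor fibers over two distinct points of $S^6$, they are disjoint, and hence $F\cap\tl=\emptyset$.

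Next I would use the identity $\tilde Q^4_s=\Ann(\tl)\cap Q^6$ — this is how \eqref{Q4s} sits inside $Q^6$ after the projective transformation, and $\Ann(\tl)=\cp(\ell^\perp)$ is cut out by the two linear equations $\langle x,w\rangle=0$ for $w$ spanning $\ell$. Since $F\subset Q^6$, intersecting with $Q^6$ is redundant, so
\begin{align*}
\tilde Q^4_s\cap F=\Ann(\tl)\cap F=\cp\big(\ell^\perp\cap U\big).
\end{align*}
Because $U$ is maximal isotropic we have $U^\perp=U$, so $\ell^\perp\cap U=\ell^\perp\cap U^\perp=(\ell+U)^\perp$. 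Since $\ell\cap U=0$ with $\dim\ell=2$ and $\dim U=4$, the space $\ell+U$ has dimension $6$, hence $(\ell+U)^\perp$ has dimension $8-6=2$. Projectivizing, $\tilde Q^4_s\cap F$ is a $\cp^1$; it automatically lies in $Q^6$, and therefore in $\tilde Q^4_s$, because it is contained in $F$.

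I do not expect a genuine obstacle here. The only care needed is in checking that the two inputs are correctly in place: that $F$ corresponds to a \emph{maximal} isotropic subspace (so that $U^\perp=U$), and that $\tl$ is both isotropic and contained in $F_\infty$ — the latter being exactly the content of the claim proved just above. Once those are granted, the statement is the perpendicular-dimension formula for the bilinear form defining $Q^6$. An alternative, heavier route would be to argue directly with the explicit skew-symmetric matrices $M(\bfz)$ of Section~\ref{twistorprojection}, but the coordinate-free computation above is shorter and more transparent.
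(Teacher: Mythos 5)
Your proof is correct and follows essentially the same route as the paper: both reduce the intersection to $\Ann(\tl)\cap F$ and then do linear algebra with the nondegenerate form defining $Q^6$, using that $F$ corresponds to a maximal isotropic subspace and that $\tl\subset F_\infty$ is disjoint from $F$. The only difference is packaging — you get the exact dimension in one stroke via $\ell^\perp\cap U=(\ell+U)^\perp$, whereas the paper bounds the intersection above (an isotropic subspace would be too large) and below (projective dimension count in $\cp^7$) separately.
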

\begin{proof}
To see this, recall that 
$\tilde{Q}^4_s = Q^6 \cap\Ann(\tl)$, so $\tilde{Q}^4_s \cap F =\Ann(\tl) \cap F$. 
This space is orthogonal to $\tl$, so if this were a 
$\cp^3$ or $\cp^2$, we would have too large an isotropic subspace. 
Finally, $\dim\Ann(\tl) = 5$ and $\dim(F) = 3$, so the 
intersection must be at least a line. 
\end{proof}
Next, if the
projective plane $Q_\lambda$ does not contain $\tl$ then it cannot lie
entirely in a fiber and by Lemma \ref{cp2} and 
Claim \ref{cl2}, there is a unique exceptional fiber 
$F_\lambda$ for which
\begin{align}
\label{Fl}
F_{\lambda} \cap Q_\lambda = F_\lambda\cap \tilde{Q}_s^4\cong\cp^1.
\end{align}
Next, we notice that, among those $\lambda$ for which 
$Q_{\lambda}$ does not contain $\tl$, 
the correspondence $\lambda \mapsto F_{\lambda}$
is injective. To see this, if $F_{\lambda_1} = F_{\lambda_2} = F$,
then \eqref{Fl} implies that $F \cap Q_{\lambda_1} = F \cap Q_{\lambda_2}$
is the same $\cp^1$. Since distinct $Q_{\lambda}$-s
can only intersect at points of $\tl$ (Proposition \ref{4.4}), 
we must have $Q_{\lambda_1}=Q_{\lambda_2}$,
and thus  $\lambda_1 = \lambda_2$. Consider the subset of $\cp^1$ 
given by the $\lambda$-s for which $Q_{\lambda}$ contains
$\tl$. This is an algebraic set, so it either consists of a 
finite number of points, or is the entire $\cp^1$. 
If it is finite, then there is a real $2$-dimensional set 
of exceptional fibers, which is contrary to assumption. 
So it must be the entire $\cp^1$, and $X$ is a double cone
(see Remark \ref{dcone}). 

\begin{claim}
If $X$ is a double cone over $\tl\subset F_\infty$, then for any
other twistor fiber $F$, $X\cap F$ is a point. 
\end{claim}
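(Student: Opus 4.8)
The plan is to exploit the ``almost disjoint'' cone structure of $\X$ together with the incidence results already assembled. By the hypothesis that $\X$ is a double cone over $\tl\subset F_{\infty}$ (Remark~\ref{dcone}) and Proposition~\ref{4.4}, $\X=\bigcup_{\lambda\in\cp^1}Q_\lambda$ where each $Q_\lambda\cong\cp^2$ contains $\tl$, lies in the vertical $\cp^3$ called $P_\lambda$, and the union is disjoint off $\tl$; moreover $\X\subset\tilde Q^4_s=\bigcup_\lambda P_\lambda$, with the $P_\lambda$ all vertical $\cp^3$-s. Fix a twistor fiber $F\ne F_{\infty}$. Since $\tl\subset F_{\infty}$ and distinct twistor fibers are disjoint, $\tl\cap F=\emptyset$; in particular no $P_\lambda$ and no $Q_\lambda$ — each of which contains $\tl$ — can be contained in any fiber except $F_{\infty}$. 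We will also use that $\X\cdot F=1$ because $\X$ has bidegree $(1,p)$, so in particular $\X\cap F\ne\emptyset$.

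The first step is to pin $\X\cap F$ to a single plane $Q_{\lambda_F}$. Since $\X\subset\tilde Q^4_s$, Claim~\ref{cl2} gives $\X\cap F\subseteq\tilde Q^4_s\cap F=:\ell_F\cong\cp^1$. Now $\ell_F=\bigcup_\lambda(P_\lambda\cap F)$, and each $P_\lambda\cap F$, being the intersection of two vertical $\cp^3$-s, is empty, a $\cp^1$, or all of $P_\lambda$; the last is impossible since it would force $F=P_\lambda\supset\tl$. If $P_\lambda\cap F$ is a $\cp^1$, then being contained in $\ell_F\cong\cp^1$ it must equal $\ell_F$, so $\ell_F\subseteq P_\lambda$. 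As $\ell_F\ne\emptyset$, such a $\lambda$ exists; call it $\lambda_F$. It is unique, for if $\ell_F\subseteq P_\mu$ with $\mu\ne\lambda_F$ then $\ell_F\subseteq P_{\lambda_F}\cap P_\mu$, which (two distinct vertical $\cp^3$-s both containing $\tl$) equals $\tl$, contradicting $\ell_F\subseteq F$ and $\tl\cap F=\emptyset$. Hence $P_\lambda\cap F=\emptyset$, so a fortiori $Q_\lambda\cap F=\emptyset$, for all $\lambda\ne\lambda_F$, and therefore $\X\cap F=\bigcup_\lambda(Q_\lambda\cap F)=Q_{\lambda_F}\cap F$.

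It remains to analyze $Q_{\lambda_F}\cap F$ using Lemma~\ref{cp2}. The plane $Q_{\lambda_F}$ is not contained in any twistor fiber: such a fiber would have to be $F_{\infty}$ (it contains $\tl$), and then $Q_{\lambda_F}\cap F=\emptyset$, contradicting $\X\cap F\ne\emptyset$. So by Lemma~\ref{cp2}, $Q_{\lambda_F}$ meets exactly one fiber in a $\cp^1$ and every other fiber in a point or the empty set; that distinguished fiber is $F_{\infty}$, since $Q_{\lambda_F}\cap F_{\infty}\supseteq\tl\cong\cp^1$. Consequently $Q_{\lambda_F}\cap F$ is a point or empty for our $F\ne F_{\infty}$, and emptiness is again excluded because $\X\cdot F=1$. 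Thus $\X\cap F=Q_{\lambda_F}\cap F$ is a single point, as claimed.

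The one step requiring genuine care is the middle one: showing that $\ell_F$ sits inside a \emph{unique} $P_{\lambda_F}$ and hence that all but one of the $Q_\lambda$ miss $F$ entirely. The temptation is to argue directly from $\X=\bigcup_\lambda Q_\lambda$ that $\ell_F$ lies in a single $Q_\lambda$, but an irreducible curve need not lie in one member of an infinite family of surfaces; routing the argument through the linear spaces $P_\lambda$, and using that an intersection of two vertical $\cp^3$-s is forced to be all of $\ell_F$ as soon as it is positive-dimensional, is what makes this clean. Everything else follows mechanically from Lemma~\ref{cp2}, Claim~\ref{cl2}, Proposition~\ref{4.4}, and the bidegree count, with no analytic input.
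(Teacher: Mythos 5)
Your argument is correct, but it is organized differently from the paper's. The paper proves the claim by contradiction in three lines: if $X\cap F$ were not a single point it would have to be positive-dimensional (the finite case being excluded by $X\cdot F=1$ as in Remark~\ref{Ful}), hence equal to the whole line $\tilde{Q}^4_s\cap F$ of Claim~\ref{cl2}; the double-cone structure then forces $X$ to contain the linear span of $\tl$ and this line --- every point of that span lies on a plane $\mathrm{span}(\tl,z)=Q_\lambda\subset X$ --- which is an isotropic $\cp^3$, contradicting irreducibility of $X$ together with $p>0$. You instead argue directly: using that the vertical spaces $P_\lambda$ pairwise meet only in $\tl$, you localize $\tilde{Q}^4_s\cap F$ inside a unique $P_{\lambda_F}$, conclude $X\cap F=Q_{\lambda_F}\cap F$, and then apply Lemma~\ref{cp2} to the plane $Q_{\lambda_F}$ (whose distinguished fiber is $F_\infty$ because it contains $\tl$), with nonemptiness supplied by $X\cdot F=1$. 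Your route avoids any appeal to irreducibility or to the degree bound $p>0$ and never needs the span/isotropy observation, at the cost of the extra bookkeeping with the family $\{P_\lambda\}$ and the parity fact that two distinct vertical $\cp^3$-s meet in $\emptyset$ or a $\cp^1$ (which the paper also invokes, in the proof of the first claim); the paper's contradiction is shorter but leans on the global structure of $X$ as a cone and its homology class. Both proofs rest on Claim~\ref{cl2} and Proposition~\ref{4.4}, so the difference is one of mechanism rather than of underlying input, and your version is a valid, self-contained alternative.
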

\begin{proof}
  Otherwise, $X \cap F$ would have to be at least one-dimensional and
  thus would equal the whole $\cp^1=\tilde{Q}_s^4\cap F$ from Claim
  \ref{cl2}. In this case, $X$ would then contain the span of $\tl$
  and this $\cp^1$, which is an isotropic $\cp^3$. This contradicts
  the fact that $X$ is irreducible and $p>0$.
\end{proof}
This claim implies that $X$ is a graph over $\rr^6$, so yields an
globally defined OCS on $\rr^6$.  To see that it is a \wp, we argue
as follows.  From Proposition \ref{confgroup}, ${\rm{SO}}(6)$ lifts to
an action of $\SU(4)$ on $F_{\infty}$.  Since $\U(4)$ acts freely and
transitively on unitary bases of $\cc^4$, it follows that $\SU(4)$
acts transitively on full flags.  Furthermore, $-I \in \SU(4)$ only
changes all signs of the basis elements, so $\SO(6)$ also acts
transitively on full flags.  Consequently, we may assume, after
reverting to our previous coordinates
\begin{align*}
x_1=\xi_0,\ x_2=\xi_{12},\ x_3=\xi_{13},\ x_4=\xi_{23},\
x_5=W_1,\ x_6=W_2,\ x_7=W_3,\ x_8=W_{123},
 \end{align*}
that 
\begin{align*}
\X\cap F_\infty=\{[ 0,0,0,0, W_1, W_2, W_3, 0]\}\cong\cp^2, 
\end{align*}
and that $\tl = L$ is given by $W_3=0$ (and thus $\tilde{Q}^4_s =
Q^4_s$). From \cite[Lemma 3.5]{BV}, $\X$ must lie in the annihilator
of $L$, which forces $\xi_{13} = \xi_{23} = 0$. This implies that $\X$
corresponds to a \wp\ OCS described by \eqref{WPS6}.
This completes the proof of Theorem~\ref{exct} and Theorem~\ref{t2} (i). 

In twistor coordinates, we can describe $\X$ as a graph over 
$\rr^6 = \cc^3$ by
\begin{align}\label{xixi} 
 [\xi_0,\>\xi_{12},\>0,\>0,\>\xi_0 z^1-\xi_{12}\bz^2,
  \>\xi_0 z^2 +\xi_{12}\bz^1,\>\xi_0 z^3,\>\xi_{12} z^3],
\end{align}
where the meromorphic function $f:\cc\to\cp^1$ is given by $f(z^3) = 
[\xi_0(z^3), \xi_{12}(z^3)]$.
From this expression, one sees again
directly that $\X$ is a double cone over a $(1,p)$-curve $\Sigma
\subset \cp^1\times \cp^1$. This $(1,p)$-curve is given by the
closure in the smooth $2$-quadric of the graph
\begin{align}
\label{gq2} 
[\xi_0(z^3),\>\xi_{12}(z^3),\>0,\>0,\>0,\>0,
\>\xi_0(z^3) z^3,\>\xi_{12}(z^3) z^3]
\end{align}
as $z^3\to\infty$. Note that since $X$ is algebraic, up to a constant multiple
$\xi_0$ and $\xi_{12}$ can be chosen as polynomial functions of $z_3$
which do not vanish simultaneously. We then clearly have
\begin{align*}
\deg(f) = \max \{  \deg (\xi_0), \deg (\xi_{12}) \} = p = \deg_{\cp^7}X - 1. 
\end{align*}
Theorem~\ref{t2} (ii) follows easily since 
the action of the conformal group is linear (so preserves the degree), and
$\deg_{\cp^7} X = 1$ if and only if $f$ is a constant if and only if
$J$ is conformally equivalent to the standard OCS $\bJ$ on $\rr^6$
(from work in Sections~\ref{warped} and \ref{finding}). 
Theorem~\ref{t2} (iii) follows from Proposition~\ref{cosymp} above 
and Corollary~\ref{habi} below.
\end{proof}
\subsection{Final Remarks.}
We conclude this section with a few observations. The OCSes 
under consideration, while not conformally 
equivalent to $\cc^3$ for $p > 0$, are in fact
{\em{biholomorphic}} to $\cc^3$.
\begin{corollary}
\label{habi}
  Let $J$ be an OCS on $\rr^6$ with finite energy. Then there exists a
  harmonic biholomorphism $F\colon(\rr^6,J)\to (\cc^3, \bJ)$.
\end{corollary}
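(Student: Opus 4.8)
The corollary is really a corollary of Theorem~\ref{t2}(i): once we know that $J$ is, up to conformal equivalence, a \wp\ OCS, the biholomorphism can be written down explicitly using the holomorphic twistor functions of Section~\ref{integ}. So the plan is first to replace $J$ by its \wp\ representative: after a conformal transformation we may assume that $J$ is the \wp\ OCS on $\rr^6=\cc^3$ with defining spinor $\phi=[\xi_0(z^3),\xi_{12}(z^3),0,0]$ as in \eqref{WPS6}--\eqref{xixi}, where, as observed at the end of the preceding subsection, $\xi_0$ and $\xi_{12}$ may be taken to be polynomials in $z^3$ with no common zero. (Consequently ``harmonic'' below refers to the Euclidean metric $\geu$ on this representative.) I then propose to take
\begin{align*}
F(z^1,z^2,z^3)=\big(\xi_0(z^3)z^1-\xi_{12}(z^3)\bz^2,\ \xi_0(z^3)z^2+\xi_{12}(z^3)\bz^1,\ z^3\big),
\end{align*}
i.e. $F=(W_1,W_2,z^3)$ in the notation of \eqref{Wi6} (with $\xi_{13}=\xi_{23}=0$); note that one must use $z^3$, not $W_3=\xi_0(z^3)z^3$, as the third coordinate, since the latter is not injective when $\xi_0$ is non-constant.

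Second, I would check that $F$ is a biholomorphism $(\rr^6,J)\to(\cc^3,\bJ)$. A direct computation gives $dW_1=\eta_1+(\xi_0'z^1-\xi_{12}'\bz^2)\,dz^3$ and $dW_2=\eta_2+(\xi_0'z^2+\xi_{12}'\bz^1)\,dz^3$, with $\eta_1,\eta_2$ as in \eqref{etai6}; since $\eta_1,\eta_2,dz^3$ span the $(1,0)$-space of $J$, the forms $dW_1,dW_2,dz^3$ are of type $(1,0)$ for $J$ (so $F$ is holomorphic, consistent with the discussion around \eqref{Wdef}), and the change of basis from $\eta_1,\eta_2,dz^3$ to $dW_1,dW_2,dz^3$ is unipotent, so $F$ is a local diffeomorphism. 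It is also a global bijection: given a target point $(\zeta^1,\zeta^2,\zeta^3)$, put $z^3=\zeta^3$, $a=\xi_0(\zeta^3)$, $b=\xi_{12}(\zeta^3)$, and solve the real-linear system $az^1-b\bz^2=\zeta^1$, $az^2+b\bz^1=\zeta^2$, which is invertible because $|a|^2+|b|^2>0$. (This also shows the formula for $F$ is valid globally, including at the finitely many $z^3$ where $\xi_0(z^3)=0$.) A holomorphic diffeomorphism is a biholomorphism.

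Third, $F$ is harmonic as a map $(\rr^6,\geu)\to(\cc^3,\geu)$. Since the target is flat, $F$ is harmonic if and only if each of its three components is a Euclidean-harmonic function; and writing $\Delta=4\sum_{i=1}^3\partial_{z^i}\partial_{\bz^i}$ one has $\Delta z^3=0$ and, because $\xi_0,\xi_{12}$ are holomorphic functions of $z^3$ alone with the other variables entering at most linearly, $\Delta\big(\xi_0(z^3)z^1\big)=0$ and $\Delta\big(\xi_{12}(z^3)\bz^2\big)=0$, hence the first two components are harmonic as well. (Alternatively, harmonicity of \emph{any} holomorphic map out of $(\rr^6,\geu,J)$ is automatic: by Proposition~\ref{cosymp} this manifold is cosymplectic, the target is K\"ahler, and a holomorphic map from a cosymplectic Hermitian manifold to a K\"ahler manifold is harmonic, by a theorem of Lichnerowicz.)

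There is no serious obstacle here: all the substance is already contained in Theorem~\ref{t2}. The only points needing a little care are bookkeeping ones -- that ``harmonic'' is understood relative to the Euclidean metric of the \wp\ representative produced by Theorem~\ref{t2}(i) (the reduction there being by a conformal map), and that the explicit formula for $F$ behaves well globally, which follows at once from the non-vanishing of $|\xi_0|^2+|\xi_{12}|^2$.
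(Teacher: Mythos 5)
Your proposal is correct and follows essentially the same route as the paper: the same map $F=(W_1,W_2,z^3)$ built from the warped product representative with polynomial $\xi_0,\xi_{12}$ having no common zero, holomorphicity via $\left<\eta_1,\eta_2,dz^3\right>=\left<dW_1,dW_2,dz^3\right>$, bijectivity by inverting the real-linear system (equivalently the special case of \eqref{zzz}), and harmonicity either by direct computation of the Euclidean Laplacian or via the cosymplectic/Lichnerowicz argument. Your added remarks (unipotent change of basis, and the warning that $W_3$ would fail injectivity) are fine refinements of the same proof rather than a different approach.
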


\begin{proof} As seen above, finite energy implies that 
$f$ is algebraic, so we can assume in
  \eqref{xixi} that $\xi_0,\xi_{12}$ are polynomials in $z^3$ that do
  not vanish simultaneously. We specify the representative of the
  projective class by supposing that $\xi_0$ has a fixed non-zero
  value at some point. We claim that the three functions
\begin{align*}
\{W_1=\xi_0(z^3)z^1-\xi_{12}(z^3)\bz^2, \
W_2=\xi_0(z^3)z^2 +\xi_{12}(z^3)\bz^1, \
z^3\}
\end{align*}
are the components of a holomorphic mapping
$F\colon(\rr^6,J)\to(\cc^3, \bJ)$. This is best seen directly from the
definition \eqref{wpo} of a \wp\ OCS $J$. Since $\xi_0,\xi_{12}$ are
themslves holomorphic functions, we may express the space of $(1,0)$
forms for $J$ as
\begin{align*}
\left<\eta_1,\,\eta_2,\,dz^3\right>=\left<dW_1,\,dW_2,\,dz^3\right>
\end{align*}
in the notation of \eqref{etaeta}. 

It remains to check that $F$ is bijective, but this is true because
$(z^1,z^2, z^3)$ can be recovered from $(W^1,W^2,z^3)$ using the
equations
\begin{align*}
\begin{split}
(|\xi_0|^2+|\xi_{12}|^2)z^1 = \bar\xi_0 W_1+\xi_{12}\bar W_2,\\
(|\xi_0|^2+|\xi_{12}|^2)z^2 = \bar\xi_0 W_2-\xi_{12}\bar W_1,
\end{split}
\end{align*}
that are special cases of \eqref{zzz}.

The adjective ``harmonic'' refers to the Euclidean metric and the
fact that \begin{align*} 
\Delta F = 4\sum_{i=1}^n\frac{\pd^2F}{\pd z^i\pd\bz^i}
\end{align*}
is zero because the variables are sufficiently separated. The result
is also a consequence of Proposition~\ref{cosymp}(ii) and the fact
that any holomorphic map from a cosymplectic manifold to $\cc$ is
necessarily harmonic \cite{Lic70,Salamon1985}.
\end{proof}

\begin{remark}{\em
  It is also true that the variety $X$ is \emph{rational}, that is,
  $X$ is birational to $\cp^3$, though it is not biholomorphic to
  $\cp^3$ unless $f$ is constant. Moreover, in higher dimensions, it
  is true that the closure of the graph of any algebraic OCS $J$
  globally defined on $\rr^{2n}$ is rational, but we omit the
  proof. However, we do not know whether $(\rr^{2n},J)$ is necessarily
  biholomorphic to $(\cc^n, \bJ)$ in higher dimensions.}
\end{remark}

By taking the meromorphic function $f$ to be non-algebraic, we obtain
the examples mentioned in the Introduction. In this case, the closure
of (\ref{gq2}) in $\cp^1 \times \cp^1$ will contain the entire missing
$\cp^1$ (except for possibly a finite set of points), so its preimage
in $\Q^6$ will essentially contain the whole twistor fiber over
infinity. By Bishop's Theorem, these examples will necessarily have
infinite energy.

Finally, we return to the examples on $T^6$ from Subsection
\ref{warptori}. Such a structure will necessarily lift to a \wps 
$\tilde{J}_6$ on $\rr^6$ with $f$ a doubly-periodic
meromorphic function on $\cc$, invariant under a lattice, which is
equivalent to a holomorphic function $f : (T^2, J_2) \rightarrow
\cp^1$.  Since $f$ is necessarily non-algebraic if non-constant,
$\tilde{J}_6$ will also necessarily have infinite energy.

\bibliography{BSVref}

\enddocument